\newtheorem{theorem}{Theorem}
\newtheorem{lemma}{Lemma}
\newtheorem{proposition}{Proposition}
\newtheorem{remark}{Remark}
\def\endpf{{\ \hfill\hbox{\vrule width1.0ex height1.0ex}\parfillskip 0pt
}}
\newenvironment{proof}{\noindent{\bf Proof:}}{\endpf}
\begin{document}
	\title{Regulation of a single-server queue with customers who dynamically choose their service durations.}
	\author{Royi Jacobovic\thanks{Department of Statistics and Data-Science; The Hebrew University of Jerusalem; Jerusalem 9190501; Israel.
			{\tt royi.jacobovic@mail.huji.ac.il}}}\maketitle
	\begin{abstract}
		In recent years, there is a growing research about queueing models with customers who choose their service durations. In general, the model assumptions in the existing literature imply that every customer knows his service demand when he enters into the service position. Clearly, this property is not consistent with some real-life situations. Thus, motivated by this issue,  the current work includes a single-server queueing model with customers who \textit{dynamically} choose their service durations. In this setup, it is shown how to derive a price-regulation (Pigouvian tax) which implies an optimal resource allocation from a social point of view. In particular, it is explained how to compute the parameters of this price function efficiently. 
	\end{abstract}
	
	\bigskip
	\noindent {\bf Keywords:}  Congestion control. $M/G/1$. Externalities. Stochastic utility.       
	
	\bigskip
	\noindent {\bf AMS Subject Classification (MSC2010):} 60K25, 90B22, 90C15.

	\section{Introduction}
	In classical queueing models like \cite{leeman1964,Naor1969,Edelson1975}, it is assumed that every customer has a linear loss in his waiting time and a positive value which is associated with a service completion. Then, customers should decide whether to join or not to join the queue. In particular, once they join the queue, they have no influence on their service times. In such models, it is usually shown that once there is no regulation and customers act independently in order to maximize their own expected utility, the long-run average social welfare is not optimized. This phenomenon is known in economic literature as \textit{the tragedy of the commons} \cite{Hardin1968}. On the other hand,  once customers are required to pay a proper price for joining the queue, then the long-run average social welfare is maximized. This (optimal) price function which is used in order to restore social optimality is called a \textit{Pigouvian tax} (see, \textit{e.g.,} Chapter 11 of \cite{MWG1995}).   
	
	Especially, there is a literature regarding models in which customers have non-identical preferences. Models in this direction are associated with \textit{heterogeneous customers} and they are surveyed in \textit{e.g.,} Subsections 2.5, 3.3, 3.4 of \cite{Hassin2003}. Recently, several authors \cite{Hopp,Tong2014,Feldman2019,Oz2019}\footnote{\cite{Oz2019} is a work which was presented in The 20th INFORMS Applied Probability Society Conference, July 3-5, 2019, Brisbane Australia.} considered queueing models with heterogeneous customers such that each customer picks his own service duration. \cite{Hopp} motivated this assumption by discretionary tasks which exist in \textit{e.g.,} a call centre agent of a mail-order catalogue company who processes orders and is also tasked with up-selling. \cite{Tong2014} suggested a motivation which is based on a firm that specializes in designing websites. This firm wishes to maximize its long run average rate of revenues under the constraint that the expected net utility of any joining customer is nonnegative. \cite{Feldman2019} provides plenty of motivations including drivers that secure a parking spot choose how long they want to park, gym goers choose their training length, but dislike waiting for the equipment to be available, customers who come to a coffee shop to work enjoy using the wifi and choose the length of their stay, but dislike waiting for a table to free up, \textit{etc}. In addition, \cite{Feldman2019} includes an analysis of the impact of different price schemes (per-use fees, price rates, time limits) on the consumer surplus. In particular, in each of the papers \cite{Hopp,Tong2014,Feldman2019}, the utility function of a generic customer from service duration belongs to a very specific parametric class. This motivated \cite{Oz2019} to consider a broader domain of price functions. Namely, this reference analysed a similar model with a quasi-linear utility function satisfying certain regularity conditions and it is shown that externalities are internalized by the optimal price function. In other words, when the optimal price function is implemented, then every customer pays the expected loss which is inflicted on the society by his service demand. 
	
	Importantly, in all of these papers, the individual optimizations of the customers are \textit{static}. This means that each customer knows his service demand before the initiation of his service period.
	From a modelling perspective, this property does not make sense when considering some real-life situations. For example, consider a businessman who parks his car in a CBD of a big city. When he arrives, he reports his arrival in a cellphone application. Then, he goes to do business and dynamically decides when to leave the CBD. Notably, his decision might be influenced by some dynamic random effects such as meetings which are surprisingly cancelled, long waiting time in a bank, \textit{etc}. Finally, when he is about to leave, he indicates the cell phone application and automatically pays the bill. Another relevant example is of a couple who sit in a coffee-shop for a blind-date. No doubt that when they enter the place, they do not know exactly how long they are going to stay. Similarly, it is reasonable to think about a queue to a public restroom in a museum. Albeit it is a bit anachronistic example, think also about a queue of people to a public payphone.
	
	Motivated by this issue, in this work the customers' marginal utilities are nonincreasing right-continuous stochastic processes. Then, each customer who receives service, observes the evolution of his marginal utility over time and \textit{dynamically} decides when to depart. 
	
	The nonincreasing marginal utility assumption is consistent with Gossen's first law of diminishing marginal returns \cite{Gossen1983}. This means that the customers' preferences are convex. For a discussion regarding the intuition which stands behind economic models with agents having convex preferences, see, \textit{e.g.,} Lecture 4 of \cite{Rubinstein2012}. In addition, the assumption that customers' preferences are described by a stochastic process  is a natural generalization of the concept of heterogeneous customers. Furthermore, an empirical justification for models with agents having stochastic preferences is provided in, \textit{e.g.,} \cite{Agranov2017,Ballinger1997,Hey1995,Tversky1969}.    
	
	The main result of the current work is as follows: There exists a quadratic price function which is optimal and internalizes the externalities in this model. Especially, relative to existing literature, in this work the space of candidate price functions is much more general. Namely, in the current work candidate price functions are not required to satisfy (almost) any regularity conditions  \textit{e.g.,} nonnegativity, monotonicity, smoothness, convexity, \textit{etc}. Importantly, it is shown how to execute an efficient computation of the parameters of the optimal price function. 
	
	Since the current model assumptions are relatively less restrictive, it is impossible to derive the results by the techniques which were applied in \cite{Hopp,Tong2014,Feldman2019,Oz2019}. This motivates an application of the following new approach: Primarily, it is shown that every candidate price function implies a stable $M/G/1$ queue with a service distribution which is determined endogenously by the price function through the individual optimizations of the customers. Thus, instead of maximizing the long-run average social welfare objective functional over the space of candidate price functions, it is suggested to use two stages: First, conduct an \textit{optimization} of the objective functional over a space of nonnegative probability measures (service distributions) for which the resulting $M/G/1$ is stable. Then, conduct a \textit{reverse-engineering} of the optimal price function, \textit{i.e.,} guess a price function which makes the customers choose service times having the same distribution like the solution of the optimization.   
	
	The rest of this work is organized as follows: Section \ref{sec: model description} is dedicated for a precise description of the model in which customers may neither balk nor renege. Section \ref{subsec: social planner} includes a precise statement of the social planner's problem in this model. Section \ref{sec: price function} includes the main result of this work with a discussion. Section \ref{sec: proof} is dedicated for explanation of the proof's guidelines while the technical details are given in an Appendix. Section \ref{sec: numerical procedure} is about an efficient computation of the parameters of the optimal price function. In particular, this section includes an analysis of some special cases. Section \ref{sec: balking} is focused on a similar model in which customers may balk. Section \ref{sec: retrial M/G/1} is about an analogue model with costumers' retrials. Especially, the results regarding this system motivate a conjecture about an expression of the expected externalities in an $M/G/1$ retrial queue with no waiting room, infinite orbit capacity and exponential retrial times. Finally, Section \ref{sec: future research} includes some open questions for future research. 	
	
	\newpage
	This paragraph includes some notations to be used later on. For every $(a,b)\in\mathbb{R}^2$ let $a\wedge b\equiv\min\{a,b\}$, $a^+\equiv\max\{a,0\}$ and $a^-\equiv (-a)^+$. In addition, $(\Omega,\mathcal{F},P )$ is the probability space which is in the background of the current probabilistic discussion. Moreover, $X\in\mathcal{F}$ means that $X$ is $\mathcal{F}$-measurable and for every such $X$, once exists, $EX$ is the expectation of $X$ with respect to (w.r.t.) $P$. For simplicity of notation, if there is an equality (inequality) of two random variables without further description, then it should be interpreted as a pointwise equality (inequality). Finally, for every $A\subseteq\mathbb{R}$ denote 
	\[1_A(x)\equiv
	\begin{dcases}
	1 & x\in A \\
	0 & x\notin A 
	\end{dcases}\ \ , \ \ \forall x\in\mathbb{R}\,.
	\]

	\section{Model description} \label{sec: model description}
	Consider a single-server service facility with an infinite waiting room. Customers arrive at this facility according  to a Poisson process with rate $\lambda\in(0,\infty)$. In addition, the service discipline is work-conserving and non-preemptive such that  customers are those who determine service durations. Let $X(\cdot),X_1(\cdot),X_2(\cdot),\ldots$ be an iid sequence of random processes which is independent from the arrival process. In addition, assume that $\left\{ X(s);s\geq0\right\}$ is a nonincreasing right-continuous stochastic process for which $X(0)$ is a positive square-integrable random variable. Moreover, let $C,C_1,C_2,\ldots$ be an iid sequence of positive random variables with mean $\gamma\in\left(0,\infty\right)$. It is assumed that this sequence is independent from all other random elements in this model. Then, for every $i\geq1$, the utility of the $i$'th customer from waiting $w\geq0$ minutes (excluding service time) and receiving a service of $s\geq0$ minutes equals to 
	\begin{equation}\label{eq: customer surplus}
	U_i\left(s,w;p\right)\equiv  \int_0^sX_i(t)dt-p(s)-C_iw\,.
	\end{equation}
	This means that the $i$'th customer suffers a linear loss from waiting time (with rate $C_i$) and has a stochastic marginal utility from service duration which is given by $ X_i(\cdot)$. Note that $X_i(\cdot)$ is nonincreasing, \textit{i.e.}, the $i$'th customer has a stochastic concave utility function of service duration. In addition, $p(s)$ is a deterministic payment which has to be paid by a customer who occupies the server for $s$ minutes. As to be explained in Section \ref{subsec: social planner},  $p(\cdot)$ is going to be determined by a social planner. 
	
	Let $\xi(\cdot)$ be a nondecreasing right-continuous nonnegative deterministic function such that $\xi(0)=0$. In particular, notice that it is possible to have $\xi(\cdot)$ which is identically zero. Then, assume that the server's revenue from providing service of $s\geq0$ minutes to an arbitrary customer (regardless his identity) is given by
	\begin{equation}\label{eq: server surplus}
	R(s;p)\equiv p(s)-\int_0^s\xi(t)dt\,.
	\end{equation}
	Moreover, assume that the server's revenue due to idle periods is identically zero. Therefore, if the $i$'th customer waits $w\geq0$ minutes and gets service of $s\geq0$ minutes, then a summation of \eqref{eq: customer surplus} and \eqref{eq: server surplus} implies that the social welfare which is gained due to the sojourn of this customer equals to
	\begin{equation} \label{eq: social welfare}
	SW_i(s,w)\equiv\int_0^sV_i(t)dt-C_iw
	\end{equation}
	where $V_i(s)\equiv\left[X_i-\xi\right](s),\forall s\geq0$. In addition, for simplicity of notation, let $V(s)\equiv \left[X-\xi\right](s),\forall s\geq0$ and notice that $V(\cdot)$ is a nonincreasing right-continuous stochastic process such that $V(0)=X(0)$ is a nonnegative square-integrable random variable.
	
	Now, all customers are familiar with the price function $p(\cdot)$ and the statistical assumptions of the model. Furthermore, for every $i\geq1$, the $i$'th customer observes the evolution of $X_i(\cdot)$ from the initiation of his service and until departure. Importantly, since neither balking nor reneging are allowed, there is no strategic interaction between the customers. Thus, if $\mathbb{F}^{X_i}=\left(\mathcal{F}_s^{X_i}\right)_{s\geq0}$ is the natural filtration which is associated with $X_i(\cdot)$, the service duration of the $i$'th customer is a solution of the problem
	\begin{equation} \label{optimization: individual}
	\begin{aligned}
	& \max_{S\in\mathcal{F}}:
	& &w(S)\equiv E\left[\int_0^S X_i(s)ds-p(S)\right] \\
	& \ \text{s.t:}
	& &  S \text{ is a stopping time w.r.t. } \mathbb{F}^{X_i}\,.
	\end{aligned}
	\end{equation}
	Moreover, to break ties, assume that if $S'$ and $S''$ are two stopping times such that $w\left(S'\right)=w\left(S''\right)$, $S'\leq S''$, $P$-a.s. and $P\left(S'<S''\right)>0$, then all customers consider $S'$ as better than $S''$.  
	In particular, notice that $w(\cdot)$ is not indexed by $i$ because $X_1(\cdot),X_2(\cdot),\ldots$ is an iid sequence of processes. 
	
	Eventually, once  \eqref{optimization: individual} has a solution, $S_i$, since $X_1(\cdot),X_2(\cdot),\ldots$ is an iid sequence of processes which is independent from the arrival process, then so is the corresponding sequence of solutions $\left(S_i\right)_{i=1}^\infty$. Thus, the resulting queue is a standard $M/G/1$ queue with a service distribution which is determined endogenously  by the choice of a price function $p(\cdot)$ through the  individual optimization \eqref{optimization: individual}. 
	\begin{remark}
		\normalfont The tie-breaking rule which is given below \eqref{optimization: individual} does not imply ordering between any two stopping times yielding the same objective value. To see this, let $\Omega=\left\{0,1\right\}$, $\mathcal{F}=2^\Omega$ and $P\left(\{0\}\right)=P\left(\{1\}\right)=\frac{1}{2}$. In addition, assume that for every $s\geq0$
		\begin{equation}
		X(s,0)\equiv \begin{dcases}
		2 & 0\leq s< 2 \\
		-2 & 2\leq s \\
		\end{dcases} \ \ , \ \ X(s,1)\equiv\begin{dcases}
		1 & 0\leq s< 3 \\
		-1 & 3\leq s \\
		\end{dcases}
		\end{equation} 
		In particular, notice that for every $\omega\in\Omega$, $s\mapsto X(s,\omega)$ is nonincreasing and right-continuous. Now, for every $\omega\in\Omega$ define 
		\begin{equation}
		S_1(\omega)\equiv \begin{dcases}
		3 & \omega=0 \\
		2 & \omega=1 \\
		\end{dcases}\ \ , \ \ S_2(\omega)\equiv \begin{dcases}
		1 & \omega=0 \\
		4 & \omega=1 \\
		\end{dcases}
		\end{equation}
		and for simplicity assume that $p(\cdot)$ is identically zero. Under these assumptions, it can be verified that $w(S_1)=w(S_2)=2$. In addition, observe that
		\begin{equation}
		\left\{\omega=0\right\}=\left\{X(0)=2\right\}\in\mathcal{F}_0^X
		\end{equation}
		and hence for every $s\geq0$, deduce that $\mathcal{F}_s^X=\mathcal{F}$. Since $S_1,S_2\in\mathcal{F}$, this implies that $S_1$ and $S_2$ are both stopping time with respect to $\mathbb{F}^X$. Finally, observe that $S_1$ and $S_2$ are not ordered since 
		\begin{equation}
		P\left(S_1<S_2\right)=P\left(\left\{1\right\}\right)=\frac{1}{2}>0\,.
		\end{equation}  
		
	\end{remark}	
	\section{The social planner's problem}\label{subsec: social planner}
	In this work the assumption is that the social planner's objective is to find $p(\cdot)$ for which:
	\begin{description}
		\item[(a)] There exists a unique solution of \eqref{optimization: individual} (with the tie-breaking rule).
		
		\item[(b)] The resulting $M/G/1$ queue is stable.
		
		\item[(c)] The mean waiting time of the resulting $M/G/1$ queue is finite.
		
		\item[(d)] The long-run average social welfare which is associated with the resulting $M/G/1$ queue is maximized over the set of all price functions for which all previous three conditions are satisfied. 
	\end{description}
	In particular, if $p(\cdot)$ satisfies these four conditions all together, then it is called \textit{an optimal price function}. The following proposition stems from some standard renewal reward arguments: 
	\begin{proposition}\label{prop: long run}
		Assume that $p(\cdot)$ is a price function for which \eqref{optimization: individual} (with the breaking-tie rule) has a unique solution $S_i$ such that 
		\begin{equation}
		ES_1<\lambda^{-1}\ \ , \ \ ES_1^2<\infty\,.
		\end{equation}
		Then, the long-run average social welfare equals to $\lambda\phi(S_1)$ where
		\begin{equation}\label{eq: long run social}
		\phi(S_1)\equiv E\int_0^{S_1}V(s)ds-\frac{\gamma\lambda ES_1^2}{2\left(1-\lambda ES_1\right)}\,.
		\end{equation}
	\end{proposition}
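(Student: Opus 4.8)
The plan is to invoke the renewal--reward theorem for the regenerative $M/G/1$ queue that the given $p(\cdot)$ induces, with regeneration epochs taken to be the arrival instants of customers who find the system empty. I would first collect the standard facts that the hypotheses guarantee. Writing $\rho\equiv\lambda ES_1<1$, the queue is positive recurrent; a busy cycle is an idle period of mean $\lambda^{-1}$ followed by a busy period of mean $ES_1/(1-\rho)$, so its expected length is $ET_c=\lambda^{-1}(1-\rho)^{-1}$, the expected number $N$ of customers served per cycle is $EN=(1-\rho)^{-1}$, and $EN/ET_c=\lambda$ (the long--run arrival rate). Since $ES_1^2<\infty$, the Pollaczek--Khinchine formula gives the customer-averaged mean waiting time $EW_\infty=\lambda ES_1^2/\bigl(2(1-\rho)\bigr)$.

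Next I would decompose the social welfare accrued during one cycle. Put $Z_i\equiv\int_0^{S_i}V_i(s)\,ds$ for the social service value generated by customer $i$ and let $W_i$ denote his waiting time; then, summing over the customers $(1),\dots,(N)$ of a generic cycle, the accrued welfare is $\mathcal W=\sum_{i=1}^{N}Z_{(i)}-\sum_{i=1}^{N}C_{(i)}W_{(i)}$, and the renewal--reward theorem identifies the long--run average social welfare with $E\mathcal W/ET_c$. It remains to evaluate the two expectations.

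For the value term I would use a Wald-type identity. The variables $Z_1,Z_2,\dots$ are i.i.d., being the same deterministic functional of the i.i.d. paths $X_i(\cdot)$ (the optimal $S_i$, hence $Z_i$, is measurable w.r.t. $X_i(\cdot)$); moreover $Z_n$ is independent of the $\sigma$-field $\mathcal G_{n-1}\equiv\sigma\bigl(\text{arrival process};X_1(\cdot),\dots,X_{n-1}(\cdot)\bigr)$, while $\{N\ge n\}\in\mathcal G_{n-1}$. Hence $E\sum_{i=1}^{N}Z_{(i)}=EN\cdot EZ_1=EN\cdot E\int_0^{S_1}V(s)\,ds$. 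For the cost term, each $C_{(i)}$ is independent of all the data that determine $W_{(i)}$ and $N$ --- the cost rate enters neither the queueing dynamics nor the stopping rule \eqref{optimization: individual} --- so $E\sum_{i=1}^{N}C_{(i)}W_{(i)}=\gamma\,E\sum_{i=1}^{N}W_{(i)}$, and applying renewal--reward once more to the waiting times gives $E\sum_{i=1}^{N}W_{(i)}=EN\cdot EW_\infty$. Assembling the pieces and using $EN/ET_c=\lambda$, the long--run average social welfare equals $\lambda\bigl(E\int_0^{S_1}V(s)\,ds-\gamma EW_\infty\bigr)=\lambda\phi(S_1)$.

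The remaining steps are routine, but I would verify them first: that the hypotheses render every expectation above finite --- in particular $E|Z_1|<\infty$, using $X_1(s)\le X_1(0)$ together with square-integrability of $X_1(0)$ and $ES_1^2<\infty$ for an upper bound, and the optimality of $S_1$ in \eqref{optimization: individual} for a lower bound --- the precise measurability/independence set-up that legitimises the Wald step even though $S_i$ is selected adaptively along $X_i(\cdot)$, and the identification of the almost-sure limit defining the long--run average with the renewal--reward ratio. The only real subtlety I anticipate is this integrability-and-independence bookkeeping for the dynamically chosen $S_i$, which is precisely the point where the present model departs from the static ones in the cited literature; everything else is textbook $M/G/1$ theory.
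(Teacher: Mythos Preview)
Your proposal is correct and follows essentially the same route as the paper: establish that the $Z_i=\int_0^{S_i}V_i(s)\,ds$ are i.i.d.\ and integrable (the paper bounds the positive part by $S_1V_1(0)$ and invokes $ES_1^2<\infty$, $EV^2(0)<\infty$ exactly as you do), use independence of the $C_i$'s from the queueing dynamics to factor out $\gamma$, and then appeal to renewal--reward plus the standard $M/G/1$ mean--waiting/queue--length formula. The paper's version is considerably terser---after the integrability and i.i.d.\ checks it simply says ``the rest follows by some standard renewal-reward arguments''---so your more explicit Wald/PK bookkeeping is just a fleshed-out version of the same argument.
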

	
	\begin{proof}
		See Appendix.
	\end{proof}
	\begin{remark}\normalfont
		Note that if $p(\cdot)$ implies multiple solutions of \eqref{optimization: individual} (with the tie-breaking rule), then it is not a good manipulator of individual behaviour, \textit{i.e.,} it is not clear how the customers are acting. In such a scenario, the social planner does not know the resulting service distribution and hence the social value $\phi(\cdot)$ is not well defined. Of-course, some additional assumptions may solve this ambiguity. For example, assuming that the social planner considers the worst-case or uses a prior belief. However, in order to reduce complexities in the model, uniqueness requirement is introduced into \textbf{(a)}. Importantly, under standard regularity conditions the tie-breaking rule implies a unique solution of \eqref{optimization: individual} (see, \textit{e.g.,} Theorem 2.2 in \cite{ Peshkir}).
		
	\end{remark}

	\section{The main result} \label{sec: price function}
	The following Theorem 1 is the main result of this paper.
	\begin{theorem}\label{thm:main result}
		There exist $x^*\in(0,\infty)$ and  $\alpha^*\in\left(0,\frac{EV(0)}{\gamma\lambda}\right]\cap\left(0,\lambda^{-1}\right)$ such that for every $\pi\in\mathbb{R}$, \begin{equation}\label{eq: pi^*}
			p_{\pi,\alpha^*,x^*}(s)\equiv \pi+s x^*+s^2\frac{\gamma\lambda}{2\left(1- \lambda\alpha^*\right)}+\int_0^s\xi(t)dt\ \ , \ \ \forall s\geq0
			\end{equation}
			is an optimal price function. Moreover, when $p_{\pi,\alpha^*,x^*}(\cdot)$ is implemented, then for every $i\geq1$ the service duration of the $i$'th customer equals to
			\begin{equation}
			S^*_i\equiv \inf\left\{s\geq0;V_i(s)-\frac{\gamma\lambda}{1-\lambda\alpha^*}s\leq x^*\right\}\,,
			\end{equation}
			 $ES^*_1=\alpha^*$, $\phi(S_1^*)>0$ and
			\begin{equation}\label{eq: x equation}
			x^*= \gamma\frac{\lambda^2E\left(S^*_1\right)^2}{2\left(1-\lambda E S^*_1\right)^2}\,.
			\end{equation} 
	\end{theorem}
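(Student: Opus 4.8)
The plan is to follow the two–stage programme sketched in the introduction. By Proposition \ref{prop: long run}, once a price function induces a unique admissible solution $S$ of \eqref{optimization: individual} the long–run average social welfare equals $\lambda\phi(S)$, with $\phi$ as in \eqref{eq: long run social}; hence it suffices to (i) maximize $\phi(S)$ over all stopping times $S$ with $\lambda ES<1$ and $ES^2<\infty$, obtaining a maximizer $S^*$, and then (ii) reverse–engineer a price function for which the individual optimization \eqref{optimization: individual} reproduces exactly $S^*$. Throughout I will use the elementary but crucial pathwise fact: if $f$ is nonincreasing then $t\mapsto\int_0^t f(s)\,ds$ is maximized over $t\in[0,\infty]$ at $t=\inf\{s\ge 0:f(s)\le 0\}$, and that this "debut'' is automatically a stopping time of the natural filtration of $f$.

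For step (i) I would first pin down the mean. Fix $\alpha\in(0,\lambda^{-1})$, write $c(\alpha)\equiv\frac{\gamma\lambda}{1-\lambda\alpha}$, and note that for any $S$ with $ES=\alpha$ one has $\frac{c(\alpha)}{2}ES^2=E\int_0^S c(\alpha)s\,ds$ by Fubini, so that $\phi(S)=E\int_0^S\bigl[V(s)-c(\alpha)s\bigr]ds$. Since $s\mapsto V(s)-c(\alpha)s-x$ is nonincreasing for every $x\in\mathbb{R}$, the pathwise fact shows that $S_{x,\alpha}\equiv\inf\{s\ge 0:V(s)-c(\alpha)s\le x\}$ maximizes $E\int_0^S[V(s)-c(\alpha)s-x]\,ds$ over all random times; moreover $V(s)\le X(0)$ forces $S_{x,\alpha}\le (X(0)-x)^+/c(\alpha)<\infty$, and with $X(0)$ square–integrable, $ES_{x,\alpha}^2<\infty$. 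Lagrangian duality for the linear constraint $ES=\alpha$ then reduces the inner problem to choosing $x=x(\alpha)$ with $ES_{x(\alpha),\alpha}=\alpha$: such a multiplier exists because $x\mapsto ES_{x,\alpha}$ is nonincreasing, tends to $0$ as $x\to+\infty$ (dominated convergence, using $S_{x,\alpha}\le X(0)/c(\alpha)$ for $x\ge 0$) and to $+\infty$ as $x\to-\infty$ (monotone convergence, using $V(s)-c(\alpha)s\to-\infty$). Strong duality then gives $G(\alpha)\equiv\phi(S_{x(\alpha),\alpha})=\sup\{\phi(S):ES=\alpha,\ ES^2<\infty\}$, and note that if two values of $x$ yield mean $\alpha$ they yield the same (a.s.) stopping time, so $G$ is well defined.

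Next I would optimize $G$ over $\alpha\in[0,\lambda^{-1})$. One checks $G(0)=0$, $G$ is continuous, $G(\alpha)\to-\infty$ as $\alpha\uparrow\lambda^{-1}$ (the penalty is at least $\frac{\gamma\lambda\alpha^2}{2(1-\lambda\alpha)}$), and $\sup G>0$: for $S=\sigma\wedge\varepsilon$ with $\sigma\equiv\inf\{s:V(s)\le 0\}$, which is strictly positive a.s. because $V(0)=X(0)>0$, one has $\phi(S)\ge E\int_0^{\sigma\wedge\varepsilon}V(s)\,ds-\frac{\gamma\lambda\varepsilon^2}{2(1-\lambda\varepsilon)}$, and $\varepsilon^{-1}E\int_0^{\sigma\wedge\varepsilon}V(s)\,ds\to EV(0)>0$ by right–continuity, so $\phi(S)>0$ for small $\varepsilon$. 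Hence the maximum of $G$ is attained at an interior point $\alpha^*\in(0,\lambda^{-1})$; set $x^*\equiv x(\alpha^*)$, $S^*\equiv S_{x^*,\alpha^*}$, so $ES^*=\alpha^*$ and $\phi(S^*)=G(\alpha^*)>0$. Writing $G(\alpha)=\inf_x\{H(x,\alpha)+x\alpha\}$ with $H(x,\alpha)\equiv E\int_0^{S_{x,\alpha}}[V(s)-c(\alpha)s-x]\,ds$, the envelope theorem at $(x^*,\alpha^*)$ gives $0=\partial_\alpha H(x^*,\alpha^*)+x^*=-\frac{\gamma\lambda^2}{2(1-\lambda\alpha^*)^2}E(S^*)^2+x^*$, which is exactly \eqref{eq: x equation}; in particular $x^*>0$, since $\phi(S^*)>0$ forces $P(S^*>0)>0$. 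Finally, for $s<S^*$ one has $V(s)>c(\alpha^*)s+x^*$, so the left limit of $V$ at $S^*$ is $\ge c(\alpha^*)S^*+x^*$; combined with $V\le X(0)$ this yields $S^*\le X(0)/c(\alpha^*)$, whence $\alpha^*\le EX(0)/c(\alpha^*)=\frac{(1-\lambda\alpha^*)EV(0)}{\gamma\lambda}\le\frac{EV(0)}{\gamma\lambda}$ and $E(S^*)^2\le EX(0)^2/c(\alpha^*)^2<\infty$; this places $\alpha^*$ in $\bigl(0,\frac{EV(0)}{\gamma\lambda}\bigr]\cap(0,\lambda^{-1})$ and confirms admissibility of $S^*$.

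For step (ii), take $p\equiv p_{\pi,\alpha^*,x^*}$ from \eqref{eq: pi^*} and observe that its quadratic coefficient is $c(\alpha^*)/2$, so that for any $\mathbb{F}^{X_i}$–stopping time $S$, $w(S)=E\bigl[\int_0^S X_i(s)\,ds-p(S)\bigr]=-\pi+E\int_0^S\bigl[V_i(s)-c(\alpha^*)s-x^*\bigr]ds$. The integrand is nonincreasing in $s$, so by the pathwise fact $w$ is maximized exactly at $S^*_i\equiv\inf\{s\ge 0:V_i(s)-c(\alpha^*)s\le x^*\}$, and the tie–breaking rule below \eqref{optimization: individual} selects this smallest maximizer, so condition \textbf{(a)} holds; $ S^*_i$ is distributed as $S^*$, so $ES^*_1=\alpha^*$, $\phi(S^*_1)>0$ and \eqref{eq: x equation} hold, and condition \textbf{(b)} holds since $\lambda\alpha^*<1$, condition \textbf{(c)} holds since $E(S^*_1)^2<\infty$ (Pollaczek–Khinchine), and condition \textbf{(d)} holds because any admissible price function induces some admissible $S$ with $ES=\alpha\in[0,\lambda^{-1})$, for which $\lambda\phi(S)\le\lambda G(\alpha)\le\lambda G(\alpha^*)=\lambda\phi(S^*_1)$ (with $G(0)=0$). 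The main obstacle is the analytic regularity behind step (i): establishing continuity (indeed differentiability) of $x(\cdot)$ and $G$ in $\alpha$ and rigorously justifying the envelope computation yielding \eqref{eq: x equation} when $V$ may jump at the first–passage time $S_{x,\alpha}$, so that the naive boundary term in $\partial_\alpha H$ need not vanish; this requires a careful analysis of the map $(\alpha,x)\mapsto S_{x,\alpha}$ and of the interchange of differentiation and expectation, which I would defer to the Appendix. The reverse–engineering in step (ii) is, by contrast, short once the first–passage structure of $S^*$ is in hand.
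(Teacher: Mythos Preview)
Your two–stage programme (optimize $\phi$ over admissible $S$, then reverse–engineer a price) and your Phase~I/Phase~II decomposition of the optimization are exactly the paper's route; the reverse–engineering step is identical. The substantive differences are in two technical places. First, for existence of $\alpha^*$ the paper does \emph{not} argue that $G(\alpha)\to-\infty$ as $\alpha\uparrow\lambda^{-1}$; instead it shows $g$ is concave (Lemma~\ref{lemma: phase II convex}), right–continuous at $0$ (Lemma~\ref{lemma: phase II continuous}), and then proves that $\alpha'\equiv\inf\{\alpha:x_\alpha<0\}<\lambda^{-1}$ (Lemma~\ref{lemma: alpha bound}) together with the observation that any $\alpha$ with $x_\alpha<0$ is dominated by some smaller $\tilde\alpha$ with $x_{\tilde\alpha}\ge 0$ (Lemma~\ref{lemma: nonnegative x}), reducing the search to the compact interval $[0,\alpha']$. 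Your claim $G\to-\infty$ is true, but your one–line justification bounds only the penalty term; one also needs $E\int_0^{S_\alpha}V(s)\,ds\le\sqrt{EV(0)^2}\sqrt{ES_\alpha^2}$ and then a short optimization to close the argument. Second, for the identity \eqref{eq: x equation} the paper does not use an envelope theorem on $G(\alpha)=\inf_x\{H(x,\alpha)+x\alpha\}$; instead it perturbs in the \emph{scale} direction, setting $\upsilon(u)=f(uS^*)$ and reading off the first–order condition at $u=1$ (Lemma~\ref{lemma: step1}), which works cleanly only when $V$ is continuous, and then passes to general $V$ by mollifying $V$ to continuous $V_n$, solving for each $n$, and taking limits (Lemma~\ref{lemma: x identity}). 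Your envelope computation is a legitimate alternative and is conceptually cleaner, but the obstacle you flag---justifying $\partial_\alpha H(x^*,\alpha^*)=-c'(\alpha^*)E(S^*)^2/2$ when $V$ may jump at $S^*$---is precisely the difficulty the paper circumvents by mollification; one rigorous way to finish your route is the sandwich $-[c(\alpha+h)-c(\alpha)]\tfrac{1}{2}ES_{x,\alpha}^2\le H(x,\alpha+h)-H(x,\alpha)\le-[c(\alpha+h)-c(\alpha)]\tfrac{1}{2}ES_{x,\alpha+h}^2$, after which continuity of $\alpha\mapsto ES_{x,\alpha}^2$ has to be established. Finally, your intermediate–value argument for the existence of $x(\alpha)$ tacitly assumes $x\mapsto ES_{x,\alpha}$ is continuous, which can fail when $V$ jumps; the paper handles this by invoking Theorem~1 of \cite{Jacobovic2020}.
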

	\begin{equation*}
	\end{equation*}
	
	Observe that an insertion of $\alpha^*=ES^*_1$ and \eqref{eq: x equation} into \eqref{eq: pi^*} implies that for every $\pi\in\mathbb{R}$ (and especially for $\pi=0$) 
	\begin{equation*}
	p^*_\pi(s)=\pi+\gamma\left[s\frac{\lambda^2E\left(S^*_1\right)^2}{2\left(1- \lambda ES^*_1\right)^2}+s^2\frac{\lambda}{2\left(1- \lambda ES^*_1\right)}\right]+\int_0^s\xi(t)dt\ \ , \ \ \forall s\geq0
	\end{equation*}
	is an optimal price function.  In addition, note that  $x^*>0$ and hence for every $\pi\in\mathbb{R}$,  $p^*_\pi(\cdot)$ is an  increasing continuous function such that $p(0)=\pi$. Thus, $\pi$ may be considered as an entry fee for joining the queue. 
	
	\subsection{Internalization of externalities}
	In general, an externality is an economic phenomenon which occurs when an agent has no other choice but being affected by the economic activity of another agent. In the context of the current model, the waiting customers have no choice but being affected by the decision-making of the customer who receives service. 
	Welfare economics  points out that once this interaction is not priced appropriately, then the market will become inefficient. On the other hand, it turns out that once this interaction is priced appropriately, the market failure is corrected and it is common to say that the externality is internalized 
	(see, \textit{e.g.,} Chapter 11 of \cite{MWG1995}).
	
	\ \ \ \ In the context of queueing, \cite{Haviv1998} suggested a measure of the externalities which are caused by a customer with a service demand of $s\geq0$ minutes in a stable $M/G/1$ queue which is operated according to a non-preemptive work-conserving strong service discipline. Specifically, assume that the system is in a steady-state and there is a tagged customer with a service demand of $s\geq0$ minutes. Then, the externalities which are caused by the tagged customer are measured by the total waiting time of other customers that could be saved if the tagged customer gave up on his service demand. Observe that this is a nonnegative random variable which is associated with an interpretation of the loss (in terms of waiting time) which is inflicted on the other customers due to the service demand of the tagged customer. Theorem 2.1 of \cite{Haviv1998} states that once $S$ is a random variable which is distributed like a service time in this $M/G/1$ queue, then the expression of the corresponding expected externalities (as a function of $s$) is given by
	\begin{equation}\label{eq: externalities definition1}
	s\frac{\lambda^2ES^2}{2\left(1- \lambda ES\right)^2}+s^2\frac{\lambda}{2\left(1- \lambda ES\right)}\,.
	\end{equation}

 Now, in the current model, $\int_0^s\xi(t)dt$ is the (deterministic) loss which is inflicted on the server by the tagged customer due to a service demand of $s\geq0$ minutes. Hence,  $p_0^*(\cdot)$ is an optimal price function which internalizes the externalities. Namely, this price function is such that every customer pays the expected loss which is inflicted on the society (server and other customers all together) by his service demand when all other customers are acting according to their self-interest. Another observation is that for every $\pi\in\mathbb{R}$, $p^*_\pi(\cdot)$ internalizes the marginal externalities, \textit{i.e.,} $\frac{dp^*_\pi}{ds}$ coincides with the marginal externalities.
 
\subsection{Suboptimality of $p^*_\pi(\cdot)$ in a wider sense} \label{subsec: tax function}
	Observe that the optimality of $p^*_\pi(\cdot)$ is attained with respect to the class of price functions which are determined uniquely by the service duration. It turns out that all price functions which belong to this class are suboptimal when considering a larger optimization domain. To show this, for simplicity assume that $\xi\equiv0$ and $V(s)$ is a positive nonincreasing and right-continuous function such that $V(s)\rightarrow0$ as $s\rightarrow\infty$. In addition, enlarge the optimization domain by letting the social planner setting a price function which is determined uniquely by the queue length and service duration. Since $V(s)>0$ for every $s\geq0$, an optimal price function in this new setup must allow a customer to get service for free once there are no waiting customers. The only function which is uniquely determined by service duration and satisfies this property is constant.  Assume by contradiction that a constant function is optimal in the new setup. Since it is uniquely determined by service duration, then Theorem \ref{thm:main result} implies that by setting a price function $p^*_\pi(\cdot)$ the social planner attains the same objective value. Thus,  $p^*_\pi$ is an optimal price function in the new setup. On the other hand, Theorem \ref{thm:main result} states that $p_\pi^*(\cdot)$ is not a constant function which implies a contradiction. 
	
	Whilst this is a drawback of the current analysis, as mentioned by \cite{Haviv1998}, there are some arguments for a price function which is uniquely determined by service duration:
\begin{enumerate}
	\item In some occasions monitoring the queue length is not possible due to technical reasons. Also, it is possible that such monitoring is too expensive. 
	
	\item It might be a bit unfair to require different payments from customers having the same service duration. For instance, a customer may refuse to pay large amounts of money due to a batch of arrivals happened just after he had started receiving service claiming that it is not her fault. 
\end{enumerate} 

\section{Proof} \label{sec: proof}
Generally speaking, the proof of Theorem \ref{thm:main result} may be divided into two stages:\newline
\begin{description}
	\item[Stage 1: Optimization] \begin{equation*}
	\end{equation*}
	Solve:
	\begin{equation}\label{optimization: main1}
	\max_{S\in\mathcal{D}}:\phi(S)
	\end{equation}
	where
	\begin{equation}
	\mathcal{D}\equiv\left\{S\in\mathcal{F};S\geq0\,\ ES<\lambda^{-1}\ ,\ ES^2<\infty\right\}
	\end{equation}
	and denote a maximizer by $S^*$.
	\newline
	\item[Stage 2: Reverse engineering]
	\begin{equation*}
	\end{equation*} 
	Find $p(\cdot)$ for which  \eqref{optimization: individual} has a unique solution which has the same distribution like $S^*$.\newline
\end{description}
	\subsection{Optimization}
	In order to solve \eqref{optimization: main1}, consider the following optimization: 
	\begin{equation} \label{optimization: social1}
	\begin{aligned}
	& \max_{S\in\mathcal{F}}:
	& &f(S)\equiv E\int_0^S \left[V(s)-\frac{\gamma\lambda s}{1-\lambda ES}\right]ds \\
	& \ \text{s.t:}
	& & 0\leq S \ , \ P \text{-a.s.}\,, \\ & & & ES<\lambda^{-1}\
	\end{aligned}
	\end{equation}
	 and denote the domain of \eqref{optimization: social1} by $\mathcal{S}$. Notice that $EV^2(0)<\infty$ implies that 
	\begin{equation} \label{eq: regularity condition 0}
	E\int_0^\infty\left[\ V(s)-\gamma \lambda s\right]^+ds\leq E\int_0^\infty\left[V(0)-\gamma\lambda s\right]^+ds<\infty
	\end{equation}
	and hence
	\begin{equation}\label{eq: regularity condition}
	E\int_0^\infty\left[V(s)-\frac{\gamma\lambda s}{1-\lambda\alpha}\right]^+ds<\infty\ \ , \ \ \forall\alpha\in\left[0,\lambda^{-1}\right)\,.
	\end{equation}  
	This shows that the objective functional $f(\cdot)$ is well-defined on $\mathcal{S}$. 
	
	Then, the purpose is to find a random variable $S^*$ which is a maximizer of \eqref{optimization: social1} such that $S^*\in\mathcal{D}$. Since $f(\cdot)$ is an extension of $\phi(\cdot)$, deduce that $S^*$ is also a maximizer of \eqref{optimization: main1}.
	In order to solve \eqref{optimization: social1}, a two phase method is suggested.
	\subsubsection{Phase I:}
	For every $\alpha\in\left[0,\lambda^{-1}\right)$ 
	and $x\in\left[-\infty,\infty\right]$ define
	\begin{equation}
	S_\alpha(x)\equiv\inf\left\{s\geq0; V(s)-\frac{\gamma\lambda s}{1-\lambda\alpha}\leq x\right\}\,.
	\end{equation} 
	Let $\alpha\in\left[0,\lambda^{-1}\right)$ and the statement of Phase I is given by \eqref{optimization: social1} with an additional constraint $ES=\alpha$, \textit{i.e.,} 
	\begin{equation} \label{optimization: phase I}
	\begin{aligned}
	& \max_{S\in\mathcal{F}}:
	& &E\int_0^S\left[ V(s)-\frac{\gamma\lambda s}{1- \lambda\alpha}\right]ds \\
	& \ \text{s.t:}
	& & 0\leq S \ , \ P \text{-a.s.}\,, \\ & & & ES=\alpha\,. 
	\end{aligned}
	\end{equation}
	If $\alpha=0$, then $S_0(\infty)=0$ and hence it is a solution of Phase I with $\alpha=0$. Assume that $\alpha\in\left(0,\lambda^{-1}\right)$ and
	observe that $ V(s)-\frac{ \gamma s}{1- \lambda\alpha}$ is decreasing w.r.t  $s$. In addition, since $ V(\cdot)$ is nonincreasing, then for every $x\in\mathbb{R}$
	\begin{equation}\label{eq: bound on zeta}
	0\leq S_\alpha(x)\leq\frac{V(0)+|x|}{\gamma\lambda}\,.
	\end{equation}
	It is given that $EV^2(0)<\infty$ which implies that $ES_\alpha(x)<\infty$ for every $x\in\mathbb{R}$. Thus, by Theorem 1 of \cite{Jacobovic2020} (see also the last paragraph before Proposition 1 of the same reference) there exists $x_\alpha\in\mathbb{R}$ for which $S_\alpha\equiv S_\alpha(x_\alpha)$ is a solution of \eqref{optimization: phase I}. 
	
	\subsubsection{Phase II:}\label{subsec: phase II original}
	For every $\alpha\in\left[0,\lambda^{-1}\right)$, define 
	
	\begin{equation}
	g(\alpha)\equiv f(S_\alpha)= E\int_0^{S_\alpha}\left[V(s)-\frac{\gamma\lambda}{1-\lambda\alpha}s\right]ds
	\end{equation}
	and the statement of Phase II is given by 
	\begin{equation}
	\max_{0\leq\alpha<\lambda^{-1}}g(\alpha)\,.
	\end{equation}
	The main result which follows from the analysis of this phase is summarized by the following Proposition \ref{lemma: existence}. 
	\begin{proposition}\label{lemma: existence}
		$g(\cdot)$ is a right-continuous concave function which is maximized at a point $\alpha^*\in\left(0,\frac{EV(0)}{\gamma\lambda}\right]\cap\left(0,\lambda^{-1}\right)$. In addition, $S^*\equiv S_{\alpha^*}$ satisfies $E(S^*)^2<\infty$, $f(S^*)>0$ and
		\begin{equation}
		x^*\equiv x_{\alpha^*}= \gamma\frac{\lambda^2E\left(S^*\right)^2}{2\left(1-\lambda E S^*\right)^2}\in(0,\infty)\,.
		\end{equation} 
		\end{proposition}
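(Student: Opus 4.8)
The plan is to carry out Phase~II by working directly with $g$, viewing $g(\alpha)$ as the marginal value of the \emph{concave} program \eqref{optimization: main1}. The starting point is that, restricted to $\{ES=\alpha\}$, the objective of \eqref{optimization: phase I} coincides with the functional $f$ of \eqref{optimization: social1} (equivalently $\phi$ of \eqref{eq: long run social}), so Phase~I gives
\[
g(\alpha)=f(S_\alpha)=\sup\bigl\{f(S)\;:\;S\in\mathcal{D},\ ES=\alpha\bigr\},\qquad\alpha\in\bigl[0,\lambda^{-1}\bigr).
\]
The structural fact behind everything is that $f$ is concave on the convex set $\mathcal{D}$. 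Indeed $S\mapsto E\int_0^S V(s)\,ds$ is concave because $t\mapsto\int_0^t V(s)\,ds$ is concave pathwise ($V$ is nonincreasing), while $S\mapsto ES^2/(1-\lambda ES)$ is convex on $\mathcal{D}$: from the elementary identity $a/u=\sup_{v>0}\bigl(2\sqrt{va}-uv\bigr)$ (valid for $a\ge0,\ u>0$),
\[
\frac{ES^2}{1-\lambda ES}=\sup_{v>0}\Bigl\{2\sqrt{v}\,\bigl(ES^2\bigr)^{1/2}-\bigl(1-\lambda ES\bigr)v\Bigr\},
\]
and for each fixed $v>0$ the bracketed expression is convex in $S$ ($S\mapsto(ES^2)^{1/2}$ is an $L^2$-norm, hence convex, and $S\mapsto -(1-\lambda ES)v$ is affine), so its pointwise supremum is convex. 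Concavity of $g$ then follows from the standard fact that the marginal of a concave function along an affine slice is concave: for $\alpha_0,\alpha_1$ and $\theta\in[0,1]$ the pointwise convex combination $\widehat S:=(1-\theta)S_{\alpha_0}+\theta S_{\alpha_1}$ lies in $\mathcal{D}$ with $E\widehat S=(1-\theta)\alpha_0+\theta\alpha_1$, whence $g\bigl((1-\theta)\alpha_0+\theta\alpha_1\bigr)\ge f(\widehat S)\ge(1-\theta)g(\alpha_0)+\theta g(\alpha_1)$. This already yields continuity of $g$ on $\bigl(0,\lambda^{-1}\bigr)$; for right-continuity at $0$ one uses $g(0)=f(0)=0$ together with $g(\alpha)\le E\int_0^{S_\alpha}V(s)\,ds\le E[S_\alpha V(0)]\to 0$ as $\alpha\downarrow 0$ (a truncation argument, with $V(0)\in L^2$ and the pointwise bound \eqref{eq: bound on zeta} supplying an integrable envelope).

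The one genuinely technical step is the envelope identity
\[
g'(\alpha)=x_\alpha-\gamma\,\frac{\lambda^{2}\,ES_\alpha^{2}}{2\,(1-\lambda\alpha)^{2}}\,,\qquad\alpha\in\bigl(0,\lambda^{-1}\bigr).
\]
I would prove it by splitting $g=h+(g-h)$, where $h(\alpha):=\sup_{ES=\alpha}E\int_0^S\!\bigl(V(s)-\tfrac{\gamma\lambda s}{1-\lambda\alpha^{*}}\bigr)ds$ is the Phase~I value with the slope \emph{frozen} at $\gamma\lambda/(1-\lambda\alpha^{*})$: the sensitivity analysis of the constrained optimal-stopping problem behind Phase~I (Theorem~1 of \cite{Jacobovic2020}) shows $h$ is differentiable, with derivative equal to the Lagrange multiplier of the mean constraint, so $h'(\alpha^{*})=x_{\alpha^{*}}=x^{*}$; and a two-sided sandwich — comparing the optimisers at $\alpha$ and at a nearby level, using that the deducted slope depends smoothly on $\alpha$ and the $L^2$-continuity of the Phase~I optimiser — identifies $(g-h)'$ and produces the displayed formula. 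This is where I expect the main obstacle: establishing continuity of $\alpha\mapsto x_\alpha$ and of $\alpha\mapsto S_\alpha$ in $L^{2}$, hence differentiability of $g$; this is the part drawing on \cite{Jacobovic2020} and where the Appendix work concentrates.

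Granting the identity, the remaining assertions are routine. For the location of $\alpha^{*}$: the unconstrained Phase~I optimiser obeys $S_\alpha(0)\le V(0)(1-\lambda\alpha)/(\gamma\lambda)$ pathwise (since $V$ is nonincreasing and the deducted line $s\mapsto\gamma\lambda s/(1-\lambda\alpha)$ has slope $\ge\gamma\lambda$), hence $ES_\alpha(0)\le EV(0)(1-\lambda\alpha)/(\gamma\lambda)\le EV(0)/(\gamma\lambda)$; since $x\mapsto ES_\alpha(x)$ is nonincreasing, whenever $\alpha>EV(0)/(\gamma\lambda)$ the constraint $ES_\alpha(x_\alpha)=\alpha>ES_\alpha(0)$ forces $x_\alpha<0$, whence $g'(\alpha)<0$ by the identity, so by concavity $\alpha^{*}\le EV(0)/(\gamma\lambda)$; because $EV(0)(1-\lambda\alpha)/(\gamma\lambda)\to0$ as $\alpha\uparrow\lambda^{-1}$ the same reasoning gives $g'<0$ near $\lambda^{-1}$, hence $\alpha^{*}<\lambda^{-1}$; and $\alpha^{*}>0$ because $g>0$ just to the right of $0$: taking $S=(\varepsilon/q)\1_{\{V(s_0)\ge\delta\}}$ with $\delta,s_0>0$ chosen so that $q:=P(V(s_0)\ge\delta)>0$ (possible as $V(0)=X(0)>0$ a.s.\ and $V$ is right-continuous) and $\varepsilon\le qs_0$, one has $V\ge\delta$ on $[0,S]$ on the defining event, so $f(S)\ge\delta\varepsilon-O(\varepsilon^{2})>0=g(0)$. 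Thus $\alpha^{*}\in\bigl(0,EV(0)/(\gamma\lambda)\bigr]\cap\bigl(0,\lambda^{-1}\bigr)$, and $g'(\alpha^{*})=0$ gives $x^{*}=x_{\alpha^{*}}=\gamma\lambda^{2}E(S^{*})^{2}/\bigl(2(1-\lambda ES^{*})^{2}\bigr)$. Finally, $E(S^{*})^{2}<\infty$ from $S^{*}=S_{\alpha^{*}}(x^{*})\le\bigl(V(0)+|x^{*}|\bigr)/(\gamma\lambda)$ and $V(0)\in L^{2}$; $f(S^{*})=g(\alpha^{*})\ge g(\varepsilon)>0$; and $x^{*}\in(0,\infty)$ because $E(S^{*})^{2}\in(0,\infty)$ (positive since $ES^{*}=\alpha^{*}>0$) and $1-\lambda\alpha^{*}\in(0,1)$.
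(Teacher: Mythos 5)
Much of your peripheral structure is sound and in places takes a genuinely different (and valid) route from the paper: your proof that $S\mapsto ES^2/(1-\lambda ES)$ is convex via the variational identity $a/u=\sup_{v>0}(2\sqrt{va}-uv)$ is a clean substitute for the paper's argument, which instead exploits joint concavity of $(S,\alpha)\mapsto \int_0^S V(s)\,ds-\gamma\lambda S^2/(2(1-\lambda\alpha))$ (convexity of $(t,s)\mapsto t^2/s$) and then passes to the marginal; your construction $S=(\varepsilon/q)1_{\{V(s_0)\ge\delta\}}$ for positivity plays the same role as the paper's right-derivative computation $\partial_+f(\alpha)|_{\alpha=0}=EV(0)>0$; and the square-integrability and $x^*>0$ conclusions are handled essentially as in the paper. (Minor point: for right-continuity at $0$ you also need $\liminf_{\alpha\downarrow0}g(\alpha)\ge0$, but concavity with $g(0)=0$ supplies this.)

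The genuine gap is the envelope identity $g'(\alpha)=x_\alpha-\gamma\lambda^2ES_\alpha^2/(2(1-\lambda\alpha)^2)$, on which you hang both \eqref{eq: x equation} and your localization of $\alpha^*$ in $\bigl(0,EV(0)/(\gamma\lambda)\bigr]$. You assert, without proof, differentiability of $g$, the multiplier interpretation $h'(\alpha)=x_\alpha$, and $L^2$-continuity of $\alpha\mapsto S_\alpha$; the last of these can genuinely fail when $V(\cdot)$ has jumps or flat stretches (both $x_\alpha$ and the stopping level can move discontinuously), and concavity of $g$ only guarantees one-sided derivatives, so even identifying $g'(\alpha^*)=0$ requires a two-sided argument you do not supply. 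The paper avoids differentiating $g$ in $\alpha$ altogether: it perturbs along the ray $u\mapsto f(uS^*)$, computes the derivative at $u=1$ explicitly via Lemma \ref{lemma: leibnitz} under the additional hypothesis that $V(\cdot)$ is continuous (so that $V(S^*)=x^*+\gamma\lambda S^*/(1-\lambda\alpha^*)$ on $\{S^*>0\}$), obtains $0=\alpha^*\bigl[x^*-\gamma\lambda^2E(S^*)^2/(2(1-\lambda\alpha^*)^2)\bigr]$ from the first-order condition (Lemma \ref{lemma: step1}), and then removes the continuity hypothesis by mollifying $V$ (Lemma \ref{lemma: approximation}) and passing to the limit along a subsequence of the approximating optimizers (Lemma \ref{lemma: x identity}). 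Your localization of $\alpha^*$ can be repaired without the envelope identity by the paper's comparison argument: if $x_\alpha<0$ one exhibits $\tilde\alpha\le\alpha$ with $x_{\tilde\alpha}\ge0$ and $g(\tilde\alpha)\ge g(\alpha)$ (Lemma \ref{lemma: nonnegative x}), so the maximizer may be taken with $x_{\alpha^*}\ge0$, whence $S_{\alpha^*}\le V(0)/(\gamma\lambda)$ and $\alpha^*\le EV(0)/(\gamma\lambda)$ (Lemma \ref{lemma: S bound}); but the identity \eqref{eq: x equation} itself still needs an argument of the paper's type.
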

	\begin{proof}
		See Appendix.
	\end{proof}
	
	\subsection{Reverse engineering}
	
	Fix $\pi\in\mathbb{R}$ and let $\alpha^*$ be the same like it was in the statement of Proposition \ref{lemma: existence}. Then, the next step is to show that $p_\pi^*\equiv p_{\pi,\alpha^*,x_{\alpha^*}}$ is an optimal price function. This is to be done by showing that when $p^*_\pi$ is implemented, then each of the individual optimizations of the customers has a unique solution which is distributed like $S^*$ given in Proposition \ref{lemma: existence}. 
	
	In practice, let $i\geq1$ and notice that when the social planner sets a price function $p_\pi^*(\cdot)$, then the resulting marginal price of service is given by
		\begin{equation*}
		Mp^*(s)\equiv x_{\alpha^*}+\frac{\gamma\lambda s}{1-\lambda\alpha^*}+\xi(s) \ \ , \ \ \forall s>0
		\end{equation*}
		which is a nondecreasing right-continuous function of $s$. Thus, since $X_i(\cdot)$ is nonincreasing and right-continuous, then the  $i$'th customer solves \eqref{optimization: individual} by a departure at the first moment when his marginal utility is not greater than the marginal price. This means that the service duration of the $i$'th customer equals to
		\begin{equation*}
		\inf\bigg\{s\geq0;X_i(s)\leq Mp^*(s) \bigg\}=\inf\left\{s\geq0;V_i(s)-\frac{\gamma\lambda s}{1-\lambda\alpha^*}\leq x_{\alpha^*}\right\}= S^*_i\,.
		\end{equation*}
		Observe that $S^*_i$ is a stopping time with respect to $\mathbb{F}^{X_i}$. In addition, $V(\cdot)$ and $V_i(\cdot)$ are equally distributed. Therefore,  $S^*$ and $S^*_i$ are equally distributed and the optimality of $p^*_\pi$ follows.

	\begin{remark}\label{remark: price function p a x}
		\normalfont Observe that the arguments which were made in this subsection lead to the conclusion that for every $\alpha\in\left[0,\lambda^{-1}\right)$ and $x\in\mathbb{R}$
		\begin{equation}
		p_{\alpha,x}(s)\equiv sx+s^2\frac{\gamma\lambda}{2\left(1-\lambda\alpha\right)}+\int_0^s\xi(s)ds\ \ , \ \ \forall s\geq0
		\end{equation}
		is a price function for which there is a unique solution of \eqref{optimization: individual}. In addition, this solution has the same distribution like $S_\alpha(x)$. 
	\end{remark}

	\begin{remark}
		\normalfont The last paragraph in page 4 of \cite{Jacobovic2020} implies that once $\left(\Omega,\mathcal{F},P\right)$ is complete, then the results of this section are valid with a weaker assumption that $V(\cdot)$ is nonincreasing and right-continuous $P$-a.s.   
	\end{remark}
	
	\section{Numerical procedure}\label{sec: numerical procedure}
	Theorem \ref{thm:main result} provides a characterization of an optimal price function up to the values of $\alpha^*$ and $x^*$. This section is about a numerical computation of $\alpha^*$ and $x^*$. In order to carry out this computation, consider an additional assumption that  $V(0)\leq\kappa$, $P$-a.s. for some positive constant $\kappa<\infty$. 
	
	\subsection{Numerical derivation of $\alpha^*$ and $x^*$}
	For every $\alpha\in(0,\lambda^{-1})$, $x_\alpha$ is determined as a solution of the equation $ES_\alpha(x)=\alpha$ in $x$. In addition, note that $x\mapsto S_\alpha(x)$ is nonincreasing and hence $x\mapsto ES_\alpha(x)$ is nonincreasing. Thus, since $ES_\alpha(\kappa)=0$, then $x_0=\kappa$ and $x_\alpha<\kappa$ for every $\alpha\in(0,\lambda^{-1})$. 
	
	Now, assume that there is a program which has an input $(\alpha,x)$ and 
	returns the numerical value of $ES_\alpha(x)$. In particular, it is possible to compute $ES_\alpha(0)-\alpha$ and see whether it is negative. Then, given the result of this query, determine  whether $x_\alpha\geq0$. Since $\alpha\mapsto x_\alpha$ is nonincreasing and Theorem \ref{thm:main result} states that $x^*\in(0,\infty)$,  then $x_\alpha\leq0$ implies that $\alpha>\alpha^*$. Otherwise, $x_\alpha$ may be derived by a standard line search algorithm on $\left[0,\kappa\right]$.
	
	Proposition \ref{lemma: existence} states that $g(\cdot)$ is concave with a maximizer $\alpha^*$. In addition, assume that there is a program which receives $(\alpha,x_\alpha^+)$ and
	\begin{enumerate}
		\item If $x_\alpha^+=0$, then it returns a statement that $\alpha^*<\alpha$.
		
		\item Otherwise, it returns the value of $g(\alpha)$. 
	\end{enumerate}
	Thus, with such a program in hands, a numerical derivation of $\alpha^*$ is possible by standard techniques. This approach will be demonstrated in several special cases in the upcoming subsections.
	
	\subsection{A motivating model}\label{subsec: DSR}
	The purpose of this subsection is to describe a specific model which helps with the interpretation of the examples in the upcoming subsections.
	 
	Consider a channel of communication with a Poisson stream of users. Every user expects to receive a message which is delivered through the channel. Thus, in order to receive the message, the user has to be connected to the channel. It is impossible to have several users who are connected to this channel simultaneously. In addition, the queue for this channel is operated according to a service discipline which is work-conserving and non-preemptive such that reneging and balking are forbidden. 
	
	Assume that the information is transferred through the channel in a constant rate which equals one. In addition, the users have only partial information regarding the message which they are going to receive. Then, every user who is connected dynamically reads the message which is transferred and decides when to disconnect. This motivates the model which is presented in Section \ref{sec: model description}. Namely, let $C_i\geq0$ be the loss rate of the $i$'th customer due to waiting and  $V_i(s)$, $s\in[0,\infty)$ is the infinitesimal value of the $s$'th byte in the message of the  $i$'th user.
	  
	In particular, notice that the nonincreasing assumption fits to scenarios in which the messages are ordered according to the marginal value of information. Namely, in each message, the most valuable bytes are placed at the beginning. Finally, note that for every $i\geq1$, the random variable
	\begin{equation}
	T_i\equiv\inf\left\{s\geq0;V_i(s)\leq0\right\}
	\end{equation}
	may be interpreted as the size of the message which is delivered to the $i$'th user.
	
	\subsection{Constant marginal value}\label{subsec: constant DSR}
	Let $T$ be a nonnegative random variable with a cumulative distribution function (cdf) $F(\cdot)$ such that $P(T>0)>0$. Assume that $V(s)=\kappa1_{[0,T]}(s)$ for every $s\geq0$ where $\kappa\in(0,\infty)$. In such a case, for every $\alpha\in\left(0,\lambda^{-1}\wedge ET\right)$ and $x\in(0,\kappa)$, 
	\begin{align}
	S_\alpha(x)&=\inf\left\{s\geq0;\kappa1_{[0,T]}(s)-\frac{\gamma\lambda s}{1-\lambda\alpha}\leq x\right\}=T\wedge z(\alpha,x)
	\end{align}
	where
	\begin{equation}
	z(\alpha,x)\equiv \frac{(\kappa-x)(1-\lambda\alpha)}{\gamma\lambda}\,.
	\end{equation}
	Therefore, if $F(\cdot)$ is the cumulative distribution function of $T$, then 
	\begin{equation}
	ES_\alpha(x)=\int_0^{z(\alpha,x)}\left[1-F(s)\right]ds\,.
	\end{equation}
	This makes a numerical computation of $x_\alpha$ possible. For simplicity of notation, denote $z_\alpha\equiv z(\alpha,x_\alpha)$ which is the solution of the equation
	\begin{equation}
	\alpha=\int_0^z\left[1-F(s)\right]ds
	\end{equation} 
	in $z$. Then, observe that
	\begin{align}
	ES^2_\alpha(x)&=\int_0^{z_\alpha^2}\left[1-F\left(\sqrt{s}\right)\right]ds\\&=2\int_0^{z_\alpha}u\left[1-F(u)\right]du\,.
	\end{align}
	Furthermore, note that
	\begin{equation}
	E\int_0^{S_\alpha}\kappa1_{[0,T]}(s)ds=\kappa E\left(T\wedge S_\alpha\right)=\kappa ES_\alpha=\kappa\alpha\,.
	\end{equation}
	Thus, for every $\alpha\in\left(0,\lambda^{-1}\wedge ET\right)$, 
	\begin{align}
	g(\alpha)&=\kappa\alpha -\frac{\gamma\lambda}{1-\lambda\alpha}\int_0^{z_\alpha}u\left[1-F(u)\right]du\,.
	\end{align} 
	Thus, it is also possible to compute the value of $\alpha^*$ by standard numerical techniques.  
	
	\subsubsection{$T\sim\exp(q)$}
	Assume that $T\sim\exp(q)$ for some $q\in(0,\infty)$. Then, for every $\alpha\in\left(0,\lambda^{-1}\wedge q^{-1}\right)$ and $x\in(0,\kappa)$
		\begin{equation}
		ES_\alpha(x)=\int_0^{z(\alpha,x)}e^{-qs}ds=q^{-1}\left[1-e^{-qz(\alpha,x)}\right]\,.
		\end{equation}
		Thus, $z_\alpha$ is given by 
		\begin{equation}
		z_\alpha=-\frac{\ln\left(1-\alpha q\right)}{q}
		\end{equation}
		and hence
		\begin{align}
		g(\alpha)&=\kappa\alpha -\frac{\gamma\lambda}{1-\lambda\alpha}\int_0^{-\frac{\ln\left(1-\alpha q\right)}{q}}ue^{-qu}du\\&=\kappa\alpha-\frac{\gamma\lambda}{1-\lambda\alpha}\cdot\frac{(1-\alpha q)\ln(1-\alpha q)+\alpha q}{q^2}\,.\nonumber
		\end{align} 
		Then, by Proposition \ref{lemma: existence}, $g(\cdot)$ is concave and hence, all which is left to do is to maximize it on $\left[0,\lambda^{-1}\wedge q^{-1}\right]$ by using standard numerical techniques. 
		
	\subsection{A marginal value which is linear in the remaining message size} \label{subsec: DSR linear}
	Let $T$ be a nonnegative random variable with a cumulative distribution function (cdf) $F(\cdot)$ such that $P(T>0)>0$.  In addition, let $\kappa\in(0,\infty)$ and assume that $T\leq\kappa$. Then, consider the case where $V(s)=(T-s)^+$ for every $s\geq0$.  Importantly, as explained in the proof of Proposition \ref{lemma: existence}, for simplicity and without loss of generality, it is possible to solve the optimization with $V(s)=T-s$ for every $s\geq0$. In addition, for every $\alpha\in\left(0,\lambda^{-1}\wedge ET\right)$ denote $b(\alpha)\equiv b(\alpha;\lambda)\equiv1+\frac{\gamma\lambda}{1-\lambda\alpha}$ and notice that  
	\begin{equation}\label{eq: S_alpha}
	S_\alpha(x)=\frac{(T-x)^+}{b(\alpha)}\ ,\ \forall\alpha\in\left(0,\lambda^{-1}\wedge ET\right),x\in(0,\kappa)\,.
	\end{equation} 
	Thus, for every $\alpha\in\left(0,\lambda^{-1}\wedge ET\right)$ and $x\in(0,\kappa)$ deduce that
	\begin{equation}\label{eq: x_alpha computation}
	ES_{\alpha}(x)=\frac{1}{b(\alpha)}\int_0^\infty(u-x)^+dF(u)
	\end{equation}
	where $F(\cdot)$ is the cumulative distribution function of $T$. This formula might be used in order to derive $x_\alpha$ by a standard line-search.
	
	Now, after explaining how to compute $x_\alpha$ for every  $\alpha\in\left(0,\lambda^{-1}\wedge ET\right)$, the next step is to show how to compute $g(\alpha)$. To this end, notice that 
	\begin{align}\label{eq: g formula}
	g(\alpha)&=E\int_0^{S_\alpha}\left[T-b(\alpha)s\right]ds\\&=ET S_\alpha-\frac{b(\alpha)}{2}ES_\alpha^2\,.\nonumber
	\end{align}   
	Now, by \eqref{eq: S_alpha} with an insertion of $x=x_\alpha$, deduce that 
	\begin{align} \label{eq: joint moment}
	ET S_\alpha&=\frac{E (T-x_\alpha+x_\alpha)\left(T-x_\alpha\right)1_{(x_\alpha,\infty)}(T)}{b(\alpha)}\\&=\frac{E\left[\left(T-x_\alpha\right)^+\right]^2+x_\alpha E\left(T-x_\alpha\right)^+}{b(\alpha)}\nonumber\\&=b(\alpha)ES_\alpha^2+x_\alpha\alpha\nonumber
	\end{align}
	and 
	\begin{equation}
	ES_\alpha^2=\frac{E\left[\left(T-x_\alpha\right)^+\right]^2}{b^2(\alpha)}\,.
	\end{equation}
	Clearly, this allows a computation of $g(\alpha)$ for every $\alpha\in\left(0,\lambda^{-1}\wedge ET\right)$. Finally, since $g(\cdot)$ is concave on $\left(0,\lambda^{-1}\wedge ET\right)$, then in order to derive $\alpha^*$, it is possible to use a line search procedure on the interval $\left(0,\lambda^{-1}\wedge ET\right)$. 
	
	\subsubsection{$T\sim U\left[t,\kappa\right]$}\label{subsec: linear DSR uniform} 
	Consider the case when $T\sim U[t,\kappa]$ for some $t\in[0,\kappa)$. Then, for every $\alpha\in\left(0,\lambda^{-1}\wedge\frac{\kappa+t}{2}\right)$ 
	\begin{equation}
	ES_\alpha(x)= \frac{1}{b(\alpha)}\begin{dcases}
	\frac{(\kappa-x)^2}{2(\kappa-t)} & t<x<\kappa \\
	\frac{t+\kappa}{2}-x & 0\leq x\leq t 
	\end{dcases}\,.
	\end{equation}
	Now, for simplicity of notation, denote $l(\alpha)=\alpha b(\alpha),\forall \alpha\in\left[0,\lambda^{-1}\right)$ and observe that it is an increasing function onto $[0,\infty)$. Then, deduce that for every $\alpha\in\left(0,\lambda^{-1}\wedge\frac{\kappa+t}{2}\right)$,
	\begin{equation}
	x_\alpha=\begin{dcases}
	\kappa-\sqrt{2(\kappa-t)l(\alpha)} & 0\leq l(\alpha)<\frac{\kappa-t}{2} \\
	\frac{t+\kappa}{2}-l(\alpha) & \frac{\kappa-t}{2}\leq l(\alpha)\leq\frac{\kappa+t}{2} 
	\end{dcases}\,.
	\end{equation} 
	In addition, by similar arguments, it can be observed that:
	\begin{align}
	ES_\alpha^2&=\frac{1}{b^2(\alpha)}\begin{dcases}
	\frac{\kappa-x_\alpha}{\kappa-t}\cdot\frac{(\kappa-x_\alpha)^2}{3} & 0\leq l(\alpha)<\frac{\kappa-t}{2} \\
	\frac{(k-x_\alpha)^3-(t-x_\alpha)^3}{3(k-t)} & \frac{\kappa-t}{2}\leq l(\alpha)\leq\frac{\kappa+t}{2} 
	\end{dcases}\,.
	\end{align}
	Then, an insertion of this expression into \eqref{eq: g formula} and \eqref{eq: joint moment} implies a closed form expression. By Proposition \ref{lemma: existence}, this function is concave and hence may be optimized by standard numerical techniques.

	\subsection{A marginal value which is a constant minus a subordinator}\label{subsec: subordinator}
	Let $\mathbb{F}$ be some filtration of $\mathcal{F}$ which is augmented and right-continuous. Then, assume that $\left\{J(s);s\geq0\right\}$ is a subordinator, \textit{i.e.}, a nondecreasing, right-continuous process with stationary and independent increments with respect to $\mathbb{F}$ such that $J(0)=0$, $P $-a.s. It is known that $Ee^{-J(s)t}=e^{-\eta(t)s}$ for every $t,s\geq0$ where
	\begin{equation}
	\eta(t)\equiv ct+\int_{(0,\infty)}\left(1-e^{-t x}\right)\nu(dx)\ \ , \ \ \forall t\geq0\,,
	\end{equation} 
	$c\geq0$ and $\nu$ is the associated L\'evy measure which satisfies
	\begin{equation}
	\int_{(0,\infty)}\left(x\wedge 1\right)\nu(dx)<\infty\,.
	\end{equation}
	In particular, $\eta(\cdot)$ is referred as the exponent of $J(\cdot)$. In addition, denote 
	\begin{equation}
	\rho\equiv EJ(1)=\eta'(0)=c+\int_{(0,\infty)}x\nu(dx)=c+\int_0^\infty\nu\left[(x,\infty)\right]dx
	\end{equation}
	and assume that $\rho\in(0,\infty)$.
	
	Let $\kappa\in(0,\infty)$ be some constant and consider a process $V(s)=\kappa-J(s)$ for every $s\geq0$. In particular, this is a nonincreasing jump process with a nonpositive drift. 
	
	Now, for every $\alpha\in\left[0,\lambda^{-1}\right)$ and $s\geq0$, define $J_\alpha(s)\equiv J(s)+\frac{\gamma\lambda}{1-\lambda\alpha}s$ which is a subordinator with L\'evy measure $\nu$,  parameter $c_\alpha\equiv c+\frac{\gamma\lambda}{1-\lambda\alpha}$ and exponent $\eta_\alpha(\cdot)$. Then, given $\alpha\in\left[0,\lambda^{-1}\right)$ and $x\in\left[0,\kappa\right]$, observe that
	\begin{align}\label{eq: x_alpha equation}
	S_\alpha(x)&=\inf\left\{s\geq0;J(s)\geq\kappa-x-\frac{ \gamma\lambda}{1-\lambda\alpha}s\right\}\nonumber\\&=\inf\left\{s\geq0;J_\alpha(s)\geq\kappa-x\right\}\\&=\inf\left\{s\geq0;J_\alpha(s)>\kappa-x\right\}\nonumber
	\end{align}
	where the last equality holds because $J_\alpha(\cdot)$ is an increasing process. To derive a relevant formula in terms of potential measures, it is known (see, \textit{e.g.}, Equation (8) in \cite{Alili2005}) that  
	\begin{equation} \label{eq: LST exit}
	Ee^{-t J_\alpha\left[S_\alpha(x)\right]}=\eta_\alpha(t)\int_{\kappa-x}^\infty e^{-t z}U_\alpha(dz)\ \ , \ \ \forall t\geq0
	\end{equation} 
	where $U_\alpha(\cdot)$ is a potential measure which is defined via
	\begin{equation}
	\int_0^\infty e^{-t z}U_\alpha(dz)=\frac{1}{\eta_\alpha(t)}\ \ , \ \ \forall t\geq0\,.
	\end{equation}
	By these equations, differentiating \eqref{eq: LST exit} w.r.t. $t$ and taking $t\downarrow0$ deduce that 
	\begin{align} \label{eq: EJ(X_e)}
	EJ_\alpha\left[S_\alpha(x)\right]=\eta'_\alpha(0)\int_0^{\kappa-x} U_\alpha(dz)\,.
	\end{align}
	Thus, by plugging this result into Equation 3.7 of \cite{Bekker2008}, deduce that 
	\begin{equation} \label{eq: EX_e subordinator}
	ES_\alpha(x)=\frac{EJ_\alpha\left[S_\alpha(x)\right]}{\eta_\alpha'(0)}=\int_0^{\kappa-x} U_\alpha(dz)\,.
	\end{equation}
	This formula might be used in order to derive $x_{\alpha'}$ by a standard line-search procedure on $\left[0,\kappa\right]$. 
	
	Then, it is left to to develop a formula of $g(\alpha)$. To this end, let $S_\alpha=S_\alpha\left(x_\alpha\right)$ and notice that 
	\begin{equation} \label{eq: subordinator 2}
	EJ^2_\alpha\left(S_\alpha\right)=2\eta_\alpha'(0)\int_0^{\kappa-x_\alpha}zU_\alpha(dz)-\eta_\alpha''(0)\int_0^{\kappa-x_\alpha}U_\alpha(dz)
	\end{equation}
	can be derived by a similar fashion to \eqref{eq: EJ(X_e)}. In addition, for every $t\geq 0$, the  Kella-Whitt martingale (see Theorem 2 of \cite{Kella1992}) which is associated with $J_\alpha(\cdot)$ is given by
	\begin{equation}
	M_\alpha(s;t)\equiv-\eta_\alpha(t)\int_0^se^{-t J_\alpha(s)}ds+1-e^{-t J_\alpha(s)}\ \ , \ \ \forall s\geq0\,. 
	\end{equation}
	It is known that this is a zero-mean martingale. Thus, by applying Doob's optional stopping theorem w.r.t. $S_\alpha\wedge s$ for some $s>0$ and then taking $s\to\infty$ using monotone and bounded convergence theorems, deduce that
	\begin{equation}
	E\int_0^{S_\alpha}e^{-t J_\alpha(s)}ds=\frac{1-Ee^{-t J_\alpha(S_\alpha)}}{\eta_\alpha(t)}\ \ , \ \ \forall t>0\,.
	\end{equation}   
	Now, by differentiating both sides w.r.t. $t$, for every $t>0$ obtain
	\begin{equation}
	E\int_0^{S_\alpha}J_\alpha(s)e^{-t J_\alpha(s)}ds=\frac{\eta_\alpha'(t)\left[1-Ee^{-t J_\alpha(S_\alpha)}\right]-\eta_\alpha(t)EJ_\alpha(S_\alpha)e^{-t J_\alpha(S_\alpha)}}{\eta_\alpha^2(t)}\,.
	\end{equation}
	Thus, by taking a limit $t\downarrow0$ using monotone convergence with the help of l'Hopital's rule (twice) deduce that 
	\begin{equation}\label{eq: integral}
	E\int_0^{S_\alpha}J_\alpha(s)ds=\frac{\eta_\alpha'(0)EJ_\alpha^2(S_\alpha)+\eta_\alpha''(0)EJ_\alpha(S_\alpha)}{2\left[\eta_\alpha'(0)\right]^2}\,.
	\end{equation}
	Now, observe that this result can be plugged into the objective function of Phase II, \textit{i.e.}, 
	\begin{align} \label{eq: g(alpha) subordinator}
	g(\alpha)&=E\int_0^{S_\alpha}\left[\kappa-J_\alpha(s)\right]ds\\&=\kappa\alpha-\frac{\eta_\alpha'(0)EJ_\alpha^2(S_\alpha)+\eta_\alpha''(0)EJ_\alpha(S_\alpha)}{2\left[\eta_\alpha'(0)\right]^2}\,.\nonumber
	\end{align}
	Thus, by an insertion of \eqref{eq: EJ(X_e)} and \eqref{eq: subordinator 2} into \eqref{eq: g(alpha) subordinator}, derive an expression of $g(\alpha)$ in terms of integrals with respect to $U_\alpha(\cdot)$.
	Finally, by Proposition \ref{lemma: existence}, $g(\cdot)$ is concave on $\left[0,\lambda^{-1}\right)$ and hence standard numerical techniques might be applied in order to maximize it.

	\subsubsection{When $J(\cdot)$ is a Poisson process}\label{subsubsec: Poisson}
	Assume that $J(\cdot)$ is a Poisson process with rate $q\in(0,\infty)$. Let $\alpha\in\left[0,\lambda^{-1}\right)$, $x\in\left[0,\kappa\right]$ and for every $j=0,1,\ldots,\lfloor \kappa-x\rfloor$ denote  
	\begin{equation}
	s_j\equiv\frac{(1-\lambda\alpha)(\kappa-x-j)}{\gamma\lambda}\,.
	\end{equation}
	In addition, let $s_{\lfloor \kappa-x\rfloor+1}\equiv0$. Especially, notice that 
	\begin{equation}
	\kappa-x-\frac{ \gamma\lambda}{1-\lambda\alpha}s_j=j\ \ , \ \ \forall j=0,1,\ldots,\lfloor \kappa-x\rfloor
	\end{equation}
	and for every $s\geq0$ define
	\begin{equation}
	\delta(s)\equiv\lfloor k-x-\frac{ \gamma\lambda}{1-\lambda\alpha}s\rfloor\,.
	\end{equation}
	Then, observe that for every $x\in\left[0,\kappa\right]$ and $s\in\left[0,\infty\right)\setminus\left\{s_0,s_1,\ldots,s_{\lfloor k-x\rfloor+1}\right\}$ 
	\begin{align}
	P\left[S_\alpha(x)>s\right]&=P\left[J(s)<k-x-\frac{ \gamma\lambda}{1-\lambda\alpha}s\right]\\&=1_{[0,\infty)}\left(k-x-\frac{ \gamma\lambda}{1-\lambda\alpha}s\right)\sum_{n=0}^{\delta(s)}e^{-qs}\frac{(qs)^n}{n!}
	\end{align}
	Thus, 
	\begin{align}\label{eq: z_alpha formula}
	ES_\alpha(x)&=\int_0^\infty1_{[0,\infty)}\left(k-x-\frac{ \gamma\lambda}{1-\lambda\alpha}s\right)\sum_{n=0}^{\delta(s)}e^{-qs}\frac{(qs)^n}{n!}ds\\&\nonumber=q^{-1}\sum_{j=0}^{\lfloor\kappa-x\rfloor}\sum_{n=0}^j\int_{s_{j+1}}^{s_j}e^{-qs}\frac{q^{n+1}s^n}{n!}ds\\&=\nonumber\sum_{j=0}^{\lfloor\kappa-x\rfloor}\sum_{n=0}^j\sum_{m=0}^n\frac{q^{m-1}}{m!}\left(e^{-q s_{j+1}}s_{j+1}^m-e^{-q s_j}s_j^m\right)
	\end{align}
	is a formula that can be used in order to find $x_\alpha$. In a similar fashion, for every $\alpha\in\left[0,\lambda^{-1}\right)$ and $s\geq0$  
	
	\begin{equation}
	P\left(S_\alpha^2>s\right)=P\left[S_\alpha(x_\alpha)>\sqrt{s}\right]
	\end{equation} 
	and hence 
	\begin{align}
	ES_\alpha^2&=\int_0^\infty P\left[S_\alpha(x_\alpha)>\sqrt{s}\right]ds\\&=\sum_{j=0}^{\lfloor k-x_\alpha\rfloor}\sum_{n=0}^j\int_{s_{j+1}}^{s_j}e^{-q\sqrt{s}}\frac{q^n\left(\sqrt{s}\right)^n}{n!}ds\\&=\sum_{j=0}^{\lfloor k-x_\alpha\rfloor}\sum_{n=0}^j\frac{2(n+1)}{q^2}\int_{\sqrt{s_{j+1}}}^{\sqrt{s_j}}e^{-qy}\frac{q^{n+2}y^{n+1}}{(n+1)!}dy\\&=\sum_{j=0}^{\lfloor k-x_\alpha\rfloor}\sum_{n=0}^j 2(n+1)\sum_{m=0}^{n+1}\frac{q^{m-2}}{m!}\left(e^{-q \sqrt{s_{j+1}}}s_{j+1}^{\frac{m}{2}}-e^{-q \sqrt{s_j}}s_j^\frac{m}{2}\right)\,.
	\end{align} 
	Therefore, with the help of \eqref{eq: integral} (Note that this equation remains valid when $J_\alpha(\cdot)$ and $\eta_\alpha(\cdot)$ are replaced by any other subordinator and its exponent). This implies that
	\begin{align}\label{eq: g(alpha) poisson}
	g(\alpha)&=\kappa\alpha-E\int_0^{S_\alpha}J(s)ds-\frac{\gamma\lambda ES_\alpha^2}{2\left(1-\lambda\alpha\right)^2}\\&=\kappa\alpha-\frac{EJ^2(S_\alpha)+EJ(S_\alpha)}{2q}-\frac{\gamma\lambda ES_\alpha^2}{2\left(1-\lambda\alpha\right)^2}\,.\nonumber
	\end{align}
	Now, observe that $J(S_\alpha)$ is a discrete random variable with support $\mathcal{N}\equiv\left\{0,1,\ldots,\lfloor\kappa-x_\alpha\rfloor+1\right\}$. Thus, since $J(\cdot)$ maintains independent increments, then for every $n\in\mathcal{N}$
	\begin{align}
	P\left[J(S_\alpha)=n\right]&=P\left[J(s_n)=n\right]\\&+\sum_{i=0}^{n-1}P\left[J(s_n)=i\right]P\left[J(s_n)-J(s_{n-1})\geq n-i\right]\,.\nonumber
	\end{align}
	Finally, note that $J(s_n)\sim\text{Poi}\left(qs_n\right)$ and $J(s_n)-J(s_{n-1})\sim\text{Poi}\left(q\frac{1-\lambda\alpha}{ \gamma\lambda}\right)$. Therefore, all of these probabilities have closed form expressions and so are $EJ(S_\alpha)$ and $EJ^2(S_\alpha)$.
	
	\begin{remark}\normalfont
		This subsection is closely related to the crossing time of a Poisson process by a decreasing linear boundary. For more information regarding this issue with some others related topics, see \cite{Zacks2017}.
	\end{remark}
	
	\subsection{A marginal value which is a nonincreasing MMFF}
	Assume that $V(\cdot)$ is a nonincreasing Markov modulated fluid flow (MMFF). More precisely, let $\left\{J(t);t\geq0\right\}$ be a continuous-time Markov chain on a finite state-space $\left\{1,2,\ldots,n\right\}$ with a rate transition matrix $Q=\left[Q_{ij}\right]$ and an initial state probability vector $\eta=\left(\eta_1,\ldots,\eta_n\right)^T$.  In addition,  $\kappa\in\left(0,\infty\right)$ and $u_1,\ldots,u_n>0$ are all constant parameters of the model. In addition, assume that $V(\cdot)$ is given by
	\begin{equation}
	V(s)= \kappa-\int_0^su_{J(t)}dt\ \ , \ \ \forall s\geq0
	\end{equation} 
	and for every $\alpha\in\left[0,\lambda^{-1}\right)$,   
	\begin{equation}
	V_\alpha(s)\equiv V(s)-\frac{\gamma\lambda}{1-\lambda\alpha}s=\kappa-\int_0^su^\alpha_{J(t)}dt\ ,\ \forall s\geq0\,.
	\end{equation}
	such that 
	\begin{equation*}
	u^\alpha_i=u_i+\frac{\gamma\lambda}{1-\lambda\alpha}\ \ , \ \ \forall i=1,2,\ldots,n\,.
	\end{equation*} 
	Clearly, $V_\alpha(\cdot)$ is also a MMFF with the same modulating process $J(\cdot)$ and an initial level $\kappa$ but with other rates $u_1^\alpha,u_2^\alpha,\ldots,u_n^\alpha$ replacing $u_1,u_2,\ldots,u_n$.
	Observe that for a fixed $\alpha\in\left[0,\lambda^{-1}\right)$ and $x\in\left[0,\kappa\right]$
	\begin{equation}
	ES_\alpha(x)=E\inf\left\{s\geq0;\int_0^su_{J(t)}^\alpha dt=\kappa-x\right\}
	\end{equation}
	is the expected time to buffer overflow calculated in Section 6 of \cite{Asmussen2000}. Thus, $x_\alpha$ can be calculated. 
	
	Therefore, it is left to explain how to compute $g(\alpha)$. To this end, notice that 
	\begin{equation}\label{eq: g MMFF}
	g(\alpha)=E\int_0^{S_\alpha}V_\alpha(s)ds=x_\alpha\alpha+E\int_0^{S_\alpha}\hat{V}_\alpha(s)ds
	\end{equation} 
	such that $\hat{V}_\alpha(s)\equiv V_\alpha(s)-x_\alpha,\forall s\geq0$. Therefore, since $\hat{V}_\alpha(\cdot)$ is also a MMFF and
	\begin{equation}
	S_\alpha=\inf\left\{s\geq0;V_\alpha(s)= x_\alpha\right\}=\inf\left\{s\geq0;\hat{V}_\alpha(s)=0\right\}\,,
	\end{equation}
	then $E\int_0^{S_\alpha}\hat{V}_\alpha(s)ds$ is a special case of the expectation derived in Equations (4.7) and (4.8) of \cite{Barron2015} with a discount factor $\beta=0$ (this factor is defined in this reference). Therefore, $g(\alpha)$ can be numerically evaluated. Since Proposition \ref{lemma: existence} implies that it is concave, then it may be maximized by standard techniques. 
	
	\section{Heterogeneous customers with balking}\label{sec: balking}
	This section is about a variation of the model which was introduced in Section \ref{sec: model description} with heterogeneous customers who may balk. 
	
	\subsection{Model description}
	Assume that $(T,\tilde{V}),(T_1,\tilde{V}_1),(T_2,\tilde{V}_2),\ldots$ is an iid sequence of random elements which is independent from the arrival process. For every $i\geq1$, $T_i$ is a random variable which indicates the type of the $i$'th customer. In particular, assume that $T$ has a continuous cdf $F(\cdot)$ such that $\text{Supp}(T)=\left(t_{\text{min}},t_{\max}\right)$ for some $-\infty<t_{\min}<t_{\max}\leq\infty$ on which $F(\cdot)$ is increasing. In addition, $\tilde{V}:\left[t_{\min},t_{\max}\right]\times[0,\infty)\times\Omega\rightarrow[0,\infty)$ is a random field which is:
	\begin{itemize}
		\item Independent from $T$.
		
		\item Nondecreasing continuous in its first coordinate.
		
		\item Nonincreasing right continuous in its second coordinate.
		
		\item  $\tilde{V}(t_{\text{max}},0)$ is a positive square-integrable random variable.
		
		\item $S^{**}\equiv\inf\left\{s\geq0\;\tilde{V}\left(t_{\text{max}},s\right)=0\right\}$ is a square-integrable random variable.
	\end{itemize}  
	Furthermore, for every $i\geq1$, $V_i(\cdot)\equiv \tilde{V}_i(T_i,\cdot)$ is a nonincreasing right-continuous process which describes the marginal utility of the $i$'th customer from service duration. It is also assumed that now the loss rates of the customers from waiting $C_1,C_2,\ldots$ are all equal to a constant $\gamma\in(0,\infty)$.
	
	In this queue,  each customer knows his type when he arrives to the queue. In addition, this type is his private information, \textit{i.e.}, no one else (including the social planner) knows it. The queue is unobservable and he has to decide whether to join or not to join the queue. Importantly, assume that any customer joins the queue if and only if his expected benefit from joining is positive. In addition, if he joins the queue, reneging is impossible and he will observe the evolution of his marginal utility from the initiation of service and until his decision to depart. 
	
	Note that $T_1,T_2,\ldots$ is an iid sequence of random variables which is independent from the arrival process. In addition, $T_i$ is the available information of the $i$'th customer when he makes his join or not to join decision. Therefore, the decisions of the customers whether to join the queue or not are iid and independent from the arrival process. Moreover, by the model assumptions, $V_1(\cdot),V_2(\cdot),\ldots$ is also an iid sequence of random processes which is independent from the arrival process. Thus, since for every $i\geq1$, the service duration of the $i$'th customer is uniquely determined by $V_i(\cdot)$, then the service durations of the customers who join the queue also constitute an iid sequence which is independent from the arrival process. Thus, by known results regarding thinning of a Poisson process, deduce that the resulting system is a $M/G/1$ queue. The purpose is to characterize a price function which is determined uniquely by the service duration and maximizes the long-run average social welfare in the resulting $M/G/1$ system. Note that the maximization is performed over the set of all price functions for which the conditions \textbf{(a)}, \textbf{(b)} and \textbf{(c)} in Section \ref{subsec: social planner} are satisfied.
	
	Importantly, in the original model which was described in Section \ref{sec: model description}, the price function regulates the service demand. However, now the price function has a double role since it regulates simultaneously both the service demand and the arrival rate. More precisely, now the price function implies a selection of customers (by their types) who join the queue and then regulates their service requirements.
	
	\subsection{Optimization}
	Consider an arbitrary price function $p$ which is determined uniquely by the service duration. Assume by contradiction that when $p$ is implemented, customers with type $t\in[t_{\min},t_{\max})$ are joining the queue and customers with type $t'\in(t,t_{\max}]$ are not joining the queue. Note that $F(\cdot)$ is continuous and hence the mass of customers with type $t'$ is zero. Consequently, if all customers with type $t'$ join the queue (and all others don't change their strategies), then their service requirements will not effect the long-run average waiting time.
	Since $\tilde{V}$ is nondecreasing in its first coordinate, this means that customers with type $t'$ are not joining the queue although they gain positive expected benefit from joining which is a contradiction. 
	Therefore, every equilibrium is associated with a threshold $t$ which depends on $p$ such that for every $i\geq1$, the $i$'th customer joins the queue if and only if $T_i>t$.

	Now, for every $t\in[t_{\min},t_{\max})$, denote $P_t\equiv P\left(T>t\right)$ and let $\tau_t\sim T|\{T>t\}$ be a random variable which is independent from $\tilde{V}$. Then, for every $t\in[t_{\min},t_{\max})$ define a process $V_t(\cdot)\equiv\tilde{V}(\tau_t,\cdot)$. The purpose is to solve an optimization 
	
	\begin{equation}\label{optimization: balking}
	v\equiv\max_{t\in[t_{\min},t_{\max})}\max_{S\in\mathcal{D}_{P_t}}\phi_t(S)\,.
	\end{equation}
	where 
	\begin{equation}
	\mathcal{D}_p\equiv\left\{S\in\mathcal{F};S\geq0\,\ ES<\left(\lambda p \right)^{-1}\ ,\ ES^2<\infty\right\}\ \ , \ \ \forall p\in(0,1]
	\end{equation}
	and 
	\begin{equation}
	\phi_t(S)\equiv P_t\left[E\int_0^{S}V_t(s)ds-\frac{\gamma\lambda P_t ES^2}{2\left(1-\lambda P_t ES\right)}\right]\ \ , \ \ \forall t\in[t_{\min},t_{\max})\ ,\ S\in\mathcal{D}_{P_t}\,.
	\end{equation}
	\begin{remark}
		\normalfont Note that $P_{t_{\max}}=0$ and hence the value of the value of the internal optimization in \eqref{optimization: balking} is zero when $t=t_{\max}$. On the other hand, Proposition \ref{lemma: existence} implies that for every $t\in\left[t_{\min},t_{\max}\right)$ the value of the internal optimization in \eqref{optimization: balking} is positive. Hence, in the external optimization, it is enough to consider $t\in\left[t_{\min},t_{\max}\right)$.
	\end{remark}
	To solve \eqref{optimization: balking}, consider two phases: 
	\subsubsection*{Phase A}
	Fix $t\in[t_{\min},t_{\max})$ and consider an optimization
	\begin{equation}
	v(t)\equiv\max_{S\in\mathcal{D}_{P_t}}\phi_t(S)\,.
	\end{equation}
	Then, with a slight abuse of notation, denote its maximizer which follows from  Proposition \ref{lemma: existence} by 
	\begin{equation}
	S_t=\inf\left\{s\geq0;V_{t}(s)-s\frac{\gamma\lambda P_t}{1-\lambda\alpha_t P_t}\leq x_t\right\}
	\end{equation}
	where $\alpha_t=ES_t$ and $x_t\in(0,\infty)$.
	\subsubsection*{Phase B}
	Maximize $v(t)$ on $[t_{\min},t_{\max})$.   
	\begin{theorem}\label{thm: balking}
		There exists $t^*\in\left[t_{\min},t_{\max}\right)$ for which $v=v\left(t^*\right)$.
	\end{theorem}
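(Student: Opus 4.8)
The plan is to establish that the map $t\mapsto v(t)$ on $[t_{\min},t_{\max})$ attains its maximum, which is not automatic since the domain is a half-open interval. First I would show that $v(\cdot)$ is upper semicontinuous on $[t_{\min},t_{\max})$. This requires tracking how the family $V_t(\cdot)=\tilde V(\tau_t,\cdot)$ varies with $t$: as $t\downarrow$ or $t\uparrow$ within the support, $\tau_t\sim T\mid\{T>t\}$ changes continuously in distribution (because $F$ is continuous and strictly increasing on $(t_{\min},t_{\max})$), and $P_t=P(T>t)$ is continuous and strictly decreasing. Since $\tilde V$ is continuous in its first coordinate, one gets continuity of the relevant integral functionals; combined with Proposition~\ref{lemma: existence} applied for each fixed $t$ (which gives, for each $t$, a well-defined finite optimal value $v(t)=\phi_t(S_t)$ with $\alpha_t,x_t$ depending on $t$), I would argue $v(\cdot)$ is in fact continuous on $(t_{\min},t_{\max})$ and upper semicontinuous at $t_{\min}$, using the square-integrability of $\tilde V(t_{\max},0)$ and of $S^{**}$ to dominate all the $t$-indexed quantities uniformly.

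Next I would rule out that the supremum ``escapes'' to the right endpoint $t_{\max}$. By the Remark preceding the theorem, $v(t)>0$ for every $t\in[t_{\min},t_{\max})$ while the internal optimization value at $t=t_{\max}$ is $0$; more quantitatively, I would show $v(t)\to 0$ as $t\uparrow t_{\max}$ — this follows because $P_t\to0$, and $\phi_t(S)$ carries an overall factor $P_t$ in front of a bracket that stays bounded (the positive part $E\int_0^\infty[V_t(s)]^+ds$ is dominated via $\tilde V(t_{\max},0)$, and the waiting-time penalty is nonnegative), so $\sup_{S}\phi_t(S)=v(t)\le P_t\cdot E\int_0^{S^{**}}\tilde V(t_{\max},s)\,ds\to 0$. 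Hence there is some $t_0\in[t_{\min},t_{\max})$ and $\varepsilon>0$ with $v(t)<v(t_0)/2$ for all $t$ in a left-neighborhood of $t_{\max}$, so the supremum over $[t_{\min},t_{\max})$ equals the supremum over a compact subinterval $[t_{\min},t_1]$.

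Finally, an upper semicontinuous function on a compact interval attains its maximum, which yields $t^*\in[t_{\min},t_1]\subseteq[t_{\min},t_{\max})$ with $v=v(t^*)$, and $v(t^*)=\phi_{t^*}(S_{t^*})$ by Phase~A. I expect the main obstacle to be the semicontinuity step: one must show that the optimizers $S_t=S_{\alpha_t}(x_t)$ — and in particular the implicitly-defined pair $(\alpha_t,x_t)$ solving $ES_{\alpha_t}(x_t)=\alpha_t$ and the fixed-point relation of Proposition~\ref{lemma: existence} — behave well as $t$ varies, or else bypass this by a direct semicontinuity argument on the value function using a ``liminf of feasible perturbations'' construction (take a near-optimal $S$ for $t$, perturb it to a feasible stopping time for nearby $t'$ using the bound \eqref{eq: bound on zeta}-type estimates, and compare objective values). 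The continuity of $F$ on its support and the uniform square-integrability furnished by the assumptions on $\tilde V$ are exactly what make this perturbation argument go through.
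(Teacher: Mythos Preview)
Your outline is correct and follows the same architecture as the paper: show $v(t)\to 0$ as $t\uparrow t_{\max}$ via the bound $v(t)\le P_t\,E[S^{**}\tilde V(t_{\max},0)]$, then establish continuity of the value on $[t_{\min},t_{\max})$ and conclude by compactness. The paper executes the continuity step by working with $r(t)=v(t)/P_t$, realizing all $\tau_t$ simultaneously via the quantile coupling $\tau_t=F^{-1}\!\bigl(1-(1-U)[1-F(t)]\bigr)$ so that $t\mapsto V_t(s)$ is pointwise nondecreasing and continuous, extracting convergent subsequences of $(\alpha_t,\sigma_t,x_t)$ using the explicit formula for $x_t$ from Proposition~\ref{lemma: existence}, and then closing with a sandwich plus the observation that $r(\cdot)$ is monotone (so subsequential limits suffice); these are exactly the tools you would need to carry out the ``obstacle'' you correctly flagged.
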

	\begin{proof}
		See Appendix
	\end{proof}
	
	\begin{remark}
		\normalfont Note that Phase A includes both Phase I and Phase II which are discussed in Section \ref{sec: proof}. 
	\end{remark}
	\subsection{Reverse engineering}
	Let $\pi\in\mathbb{R}$ and define a price function 
	\begin{equation}
	p_{\pi}(s)\equiv \pi+s x_{t^*}+s^2\frac{\gamma\lambda P_{t^*}}{2\left(1- \lambda \alpha_{t^*}P_{t^*}\right)}+\int_0^s\xi(t)dt\ \ , \ \ \forall s\geq0
	\end{equation}
	with a marginal price
	\begin{equation}
	Mp(s)=x_{t^*}+s\frac{\gamma\lambda P_{t^*}}{1-\lambda \alpha_{t^*}P_{t^*}}+\xi(s)\ \ , \ \ \forall s\geq0\,.
	\end{equation}
	Now, consider a strategy profile such that for every $i\geq1$, 
	\begin{enumerate}
		\item The $i$'th customer joins the queue if and only if $T_i>t^*$. 
		
		\item If $T_i>t^*$, then the service duration of the $i$'th customer is given by 
		\begin{equation}
		S_i=\inf\left\{s\geq0;\tilde{V}(T_i,s)\leq Mp(s)\right\}\,.
		\end{equation}
	\end{enumerate}
	From the analysis which was described so far, it turns out that this strategy profile implies an optimal resource allocation from a social point of view. Thus, it is left to find an entry fee $\pi$ which makes the customers join the queue if and only if their types are greater than $t^*$.
	
	Consider a tagged customer with type $t$ and marginal utility $V_i(\cdot)=\tilde{V}(t,\cdot)$ in a stationary system in which $p_\pi(\cdot)$ is implemented and all other customers act according to the above-mentioned strategy profile. It is straightforward that once the tagged customer joins the queue, he will consume service until his marginal utility is less than the marginal price. Thus, it is left to find $\pi$ such that the expected benefit from joining the queue is positive if and only if the type of the tagged customer is greater than $t^*$. The expected benefit of the tagged customer from joining the queue $\Lambda(\pi;t)$ equals to 
	\begin{equation}
	E\int_0^{\inf\left\{s;\tilde{V}(t,s)\leq Mp(s)\right\}}\left[\tilde{V}(t,s)-Mp(s)\right]ds-\frac{\gamma\lambda P(T>t^*) ES_{t^*}^2}{2\left[1-\lambda P\left(T>t^*\right) ES_{t^*}\right]}-\pi\,.
	\end{equation} 
	
	Since $(t,s)\mapsto\tilde{V}(t,s)$ is nondecreasing in its first coordinate, then the expected benefit of the tagged customer from joining is nondecreasing in his type $t$. Therefore, an optimal entry fee $\pi^*$ exists if and only if $\Lambda(\pi;t^*)\leq0$ and $\Lambda(\pi^*;t)>0$ for every $t\in\left(t^*,t_{\max}\right)$. In particular, once $(t,s)\mapsto\tilde{V}(t,s)$ is increasing in its first coordinate, then for every $\pi$, $t\mapsto\Lambda(\pi; t)$ is increasing and hence there exists an optimal entry fee. 
	
	\begin{remark}
		\normalfont When $\pi^*$ exists, the analysis which appears above implies that $p_{\pi^*}(\cdot)$ internalizes the externalities up to an additive constant $\pi^*$. That is, every customer who occupies the server for $s\geq0$ minutes, pays an entry fee $\pi^*$ plus the expected waiting time of other customers which could be saved if he did not join the queue (or joined the queue and reduced his service requirement to zero). 
	\end{remark}
	
	\section{A retrial version of the model} \label{sec: retrial M/G/1}
	Consider the same model which was described in Section \ref{sec: model description} with the following modifications. Assume that now there is no waiting room. Instead, there is an orbit with an infinite capacity such that any customer who finds the server busy at his arrival time joins the orbit. Every customer in the orbit conducts retrials until he finds the server idle and then he starts receiving service. In addition,  the retrial times constitute an iid sequence of exponentially distributed random variables with rate $\theta\in(0,\infty)$ which is independent of all other random elements in this model. In particular, just like in the original model, the service discipline is non-preemptive such that customers are those who decide on their service durations. Assume that $\left(C,D\right),\left(C_1,D_1\right),\left(C_2,D_2\right),\ldots$ is an iid sequence of nonnegative random variables which is independent from all other random elements in this model such that $ED=\delta\in\left(0,\infty\right)$. 
	
	Now, for every $i\geq1$, the total utility of the $i$'th customer from orbiting $w\geq0$ minutes, conducting $r$ retrials and receiving a service of $s\geq0$ minutes is given by 
	\begin{equation}
	U_i \left(s,w,r;p\right)\equiv  X_i(s)-p(s)-C_iw-D_ir\,.
	\end{equation}
	This means that now, besides the original assumptions, there is an additional assumption that the $i$'th customer suffers a constant loss of $D_i$ monetary units per each retrial. 
	
	Given this model, the purpose is to find an optimal price function in same sense of Section \ref{subsec: social planner}. To this end, the same approach which was described in Section \ref{sec: price function} can be carried out. To start with, using known results regarding $M/G/1$ retrial queue with exponential retrial times (see \textit{e.g.}, Equations (3.15) and (3.16) in \cite{Yang1987}) , the optimization to be solved is given by 
	\begin{equation} \label{optimization: social retrial original}
	\max_{S\in\mathcal{D}}:
	\ E\int_0^S V(s)ds-\left( \gamma+\theta\delta\right)\lambda\left[\frac{ES^2}{2\left(1- \lambda ES\right)}+\frac{ES}{\theta\left(1-\lambda ES\right)}\right]\,.
	\end{equation}
	Especially, notice that just like in Section \ref{sec: price function} the objective functional can be extended (denote the extension by $\tilde{f}(\cdot)$) and then maximized on $\mathcal{S}$. This can be done by a two-phase method.
	
	\subsection*{Phase I:}
	For every $\alpha\in\left[0,\lambda^{-1}\right)$ the optimization of Phase I is 
	
	\begin{equation} \label{optimization: social retrial rephrased}
	\begin{aligned}
	& \max_{S\in\mathcal{F}}:
	& &E\int_0^S \left[V(s)-\frac{\lambda\left(\gamma+\theta\delta\right)}{1-\lambda \alpha}s\right]ds \\
	& \ \text{s.t:}
	& & 0\leq S \ , \ P \text{-a.s.}\,, \\ & & & ES=\alpha\,.
	\end{aligned}
	\end{equation}
	This optimization is identical to \eqref{optimization: phase I} up to re-parametrization of $\gamma$ and hence the same results hold for this case. In particular, there exists $x_\alpha$ such that
	\begin{equation}
	T_\alpha\left(x_\alpha\right)\equiv T_\alpha\equiv\inf\left\{s\geq0;V(s)-\frac{\lambda\left(\gamma+\theta\delta\right)}{1-\lambda\alpha}s\leq x_\alpha\right\}
	\end{equation}
	is an optimal solution of \eqref{optimization: social retrial rephrased}. 
	\subsection*{Phase II:}
	For every $\alpha\in\left[0,\lambda^{-1}\right)$ denote the objective value of Phase II:
	
	\begin{align}
	\tilde{g}(\alpha)&\equiv \tilde{f}\left(T_\alpha\right)\\&=E\int_0^{T_\alpha} \left[V(s)-s\frac{\lambda\left(\gamma+\theta\delta\right)}{1-\lambda \alpha}\right]ds-\frac{\lambda\left(\gamma+\theta\delta\right)\alpha}{\theta\left(1-\lambda \alpha\right)}\,.\nonumber
	\end{align}
	This phase can be analyzed by the same method which was applied to Phase II of the original model. 
	
	\begin{theorem}\label{thm: existence retrial}
		There are $\alpha^*\in\left(0,\lambda^{-1}\right)$ and $x^*\in(0,\infty)$ such that $T^*=T_{\alpha^*}\left(x^*\right)$ is a solution of \eqref{optimization: social retrial rephrased}.  In addition, for every constant $\pi$, an optimal price function is given by \begin{equation}\label{eq: pi^* retrial}
		\tilde{p}_\pi^*(s)\equiv \pi+s x^*+s^2\frac{\lambda\left(\gamma+\theta\delta\right)}{2\left(1- \lambda\alpha^*\right)}+\int_0^s\xi(t)dt\ \ , \ \ \forall s\geq0\,.
		\end{equation}
	\end{theorem}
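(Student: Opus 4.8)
The plan is to replay the two-stage argument (Optimization, then Reverse engineering) behind Theorem \ref{thm:main result}, observing that the retrial model alters it only mildly: inside Phase I the constant $\gamma$ is replaced by $\widehat\gamma\equiv\gamma+\theta\delta\in(0,\infty)$, and the Phase II objective picks up the extra additive term $-\frac{\lambda\widehat\gamma\,\alpha}{\theta(1-\lambda\alpha)}$. As a preliminary, $EV^2(0)<\infty$ gives, exactly as for \eqref{eq: regularity condition}, that $E\int_0^\infty\left[V(s)-\frac{\lambda\widehat\gamma}{1-\lambda\alpha}s\right]^+ds<\infty$ for every $\alpha\in[0,\lambda^{-1})$, so $\tilde f$ is well defined on $\mathcal{S}$. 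Phase I is then handled verbatim as in Section \ref{sec: proof}: for fixed $\alpha\in[0,\lambda^{-1})$ the map $s\mapsto V(s)-\frac{\lambda\widehat\gamma}{1-\lambda\alpha}s$ is decreasing and right-continuous, and the bound analogous to \eqref{eq: bound on zeta}, $0\le T_\alpha(x)\le\frac{V(0)+|x|}{\lambda\widehat\gamma}$, together with $EV^2(0)<\infty$, yields $ET_\alpha(x)<\infty$ for all $x\in\mathbb{R}$; Theorem 1 of \cite{Jacobovic2020} then produces $x_\alpha\in\mathbb{R}$ for which $T_\alpha\equiv T_\alpha(x_\alpha)$ solves \eqref{optimization: social retrial rephrased}, with $ET_\alpha^2<\infty$.

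For Phase II I would write $\tilde g(\alpha)=g_{\widehat\gamma}(\alpha)-\frac{\lambda\widehat\gamma}{\theta}h(\alpha)$, where $g_{\widehat\gamma}$ denotes the Phase II objective of the original model with $\widehat\gamma$ in the role of $\gamma$ and $h(\alpha)\equiv\frac{\alpha}{1-\lambda\alpha}$. By Proposition \ref{lemma: existence} (which uses only $\gamma\in(0,\infty)$) the function $g_{\widehat\gamma}$ is right-continuous, concave and bounded above; since $h$ is continuous on $[0,\lambda^{-1})$ with increasing derivative $(1-\lambda\alpha)^{-2}$, it is convex, so $-\frac{\lambda\widehat\gamma}{\theta}h$ is continuous and concave. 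Hence $\tilde g$ is right-continuous and concave on $[0,\lambda^{-1})$, therefore continuous there; and $h(\alpha)\to\infty$ as $\alpha\uparrow\lambda^{-1}$ while $g_{\widehat\gamma}$ stays bounded above forces $\tilde g(\alpha)\to-\infty$, so together with $\tilde g(0)=0$ (because $T_0\equiv0$) a maximizer $\alpha^*\in[0,\lambda^{-1})$ exists. Putting $x^*\equiv x_{\alpha^*}$ and $T^*\equiv T_{\alpha^*}$, Phase I gives $E(T^*)^2<\infty$ and $T^*\in\mathcal{D}$, and the Phase I / Phase II decomposition identifies $T^*$ as a maximizer of the extended objective $\tilde f$ over $\mathcal{S}$, hence of \eqref{optimization: social retrial original} over $\mathcal{D}$.

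The step I expect to be the genuine obstacle is excluding the degenerate corner $\alpha^*=0$ (which would collapse $T^*$ to $0$) and re-establishing $x^*\in(0,\infty)$. Since $\tilde g$ is concave with $\tilde g(0)=0$, it suffices to show that its right-derivative at the origin, $g_{\widehat\gamma}'(0^+)-\frac{\lambda\widehat\gamma}{\theta}$, is strictly positive, i.e.\ that the marginal social value of granting an infinitesimal amount of service still exceeds the per-customer marginal retrial cost; and, exactly as in the proof of Proposition \ref{lemma: existence}, $x^*>0$ would then follow from $x_0>0$ and the monotonicity of $\alpha\mapsto x_\alpha$. The care needed here is that the retrial term shifts the Phase II objective strictly downward and has negative slope at $\alpha=0$, so, unlike in the non-retrial model, this positivity is not automatic and must be traced back to the standing square-integrability and positivity hypotheses on $V(0)$; I would recover $g_{\widehat\gamma}'(0^+)$ from the behaviour of $T_\alpha(x_\alpha)$ as $\alpha\downarrow0$ just as in that proof.

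Finally, Reverse engineering transcribes Section \ref{sec: proof}. For any $\pi$ the price $\tilde p_\pi^*$ of \eqref{eq: pi^* retrial} has marginal price $x^*+\frac{\lambda\widehat\gamma}{1-\lambda\alpha^*}s+\xi(s)$, which is nondecreasing and right-continuous; since in the retrial model the $i$'th customer's orbit time and number of retrials are already realised by the time he reaches the server, his only decision remains his dynamic service duration, so \eqref{optimization: individual} is solved by departing at the first $s$ at which $X_i(s)$ does not exceed this marginal price, i.e.\ by the $\mathbb{F}^{X_i}$-stopping time $\inf\{s\ge0:V_i(s)-\frac{\lambda\widehat\gamma}{1-\lambda\alpha^*}s\le x^*\}$, which is distributed like $T^*$ and unique by the tie-breaking rule. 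Because $\alpha^*<\lambda^{-1}$ and $E(T^*)^2<\infty$, the induced $M/G/1$ retrial queue is stable with finite mean waiting time, and by the retrial-queue formulas underlying \eqref{optimization: social retrial original} its long-run average social welfare is maximal; hence $\tilde p^*_\pi$ is an optimal price function.
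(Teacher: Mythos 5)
Your two-stage plan is the route the paper itself intends (the paper offers no separate proof of this theorem beyond the remark that the earlier method applies after re-parametrizing $\gamma$ as $\gamma+\theta\delta$), and most of it is sound: Phase I transfers verbatim, the decomposition $\tilde g(\alpha)=g_{\gamma+\theta\delta}(\alpha)-\frac{\lambda(\gamma+\theta\delta)}{\theta}\cdot\frac{\alpha}{1-\lambda\alpha}$ correctly yields concavity (hence continuity) and, together with $\tilde g(0)=0$ and $\tilde g(\alpha)\to-\infty$ as $\alpha\uparrow\lambda^{-1}$, a maximizer on $[0,\lambda^{-1})$; and the reverse-engineering step is indeed unaffected by the retrial structure, since the orbiting and retrial costs are sunk by the time a customer begins service.

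The genuine gap is precisely the step you flag and then defer: proving $\alpha^*>0$ (and thence $x^*>0$). You assert that the positivity of $\tilde g'(0^+)=g_{\gamma+\theta\delta}'(0^+)-\frac{\lambda(\gamma+\theta\delta)}{\theta}$ ``must be traced back to the standing square-integrability and positivity hypotheses on $V(0)$,'' but it cannot be: those hypotheses do not imply it. In the original model the deterministic policy $S=\alpha$ already gives slope $EV(0)>0$ at the origin because the congestion cost there is of order $\alpha^2$; in the retrial model the extra term $\frac{\lambda(\gamma+\theta\delta)\alpha}{\theta(1-\lambda\alpha)}$ is genuinely linear at the origin, while $g_{\gamma+\theta\delta}(\alpha)\le E\int_0^{S_\alpha}V(0)\,ds\le\alpha\,\operatorname{ess\,sup}V(0)$ caps the attainable slope of the benefit side. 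Consequently $\tilde g(\alpha)\le\alpha\left[\operatorname{ess\,sup}V(0)-\frac{\lambda(\gamma+\theta\delta)}{\theta}\right]$ for all $\alpha\in(0,\lambda^{-1})$, and if, say, $V(s)=1_{[0,1]}(s)$ deterministically while $\lambda(\gamma+\theta\delta)/\theta>1$, then $\tilde g(\alpha)<0=\tilde g(0)$ for every $\alpha>0$, so the concave $\tilde g$ is maximized only at $\alpha^*=0$ and $T^*\equiv0$. Thus the conclusion $\alpha^*\in(0,\lambda^{-1})$, $x^*\in(0,\infty)$ requires an additional hypothesis such as $P\left(V(0)>\lambda(\gamma+\theta\delta)/\theta\right)>0$ (a point the paper also glosses over); under such a hypothesis your sketch does close, since $\tilde g'(0^+)>0$ forces $\alpha^*>0$ and then $x^*>0$ follows from the first-order identity \eqref{eq: x^* retrial}. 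As written, however, the proposal does not prove the positivity claims, and no argument ``just as in that proof'' can.
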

	
	\begin{remark}
		\normalfont Observe that the parameters of $\tilde{p}^*(\cdot)$ can be derived using the numerical procedure which was illustrated in Section \ref{sec: numerical procedure}.
	\end{remark}
	
	\subsection{Externalities in an $M/G/1$ retrial queue}\label{subsec: retrial queue11}
	The purpose of this part is to discuss  externalities in an $M/G/1$ retrial queue with no waiting room, infinite orbit capacity and exponential retrial times with a constant rate (for the exact model setup see, \textit{e.g.}, Sections 2 and 3 of \cite{Yang1987}). A general observation regarding externalities in retrial queues is that unlike the regular $M/G/1$ queue, the externalities caused by a tagged customer are decomposed into two parts:
	
	\begin{enumerate}
		\item \textit{Waiting externalities:} The total waiting time that could be saved for customers if the tagged customer reduced his service demand to zero.
		
		\item \textit{Retrial externalities:} The number of retrials that could be saved for customers if the tagged customer reduced his service demand to zero.
	\end{enumerate}  
	Consider the case when $V(\cdot)$ is a continuous process. Then,  by the same kind of analysis which was performed in the proof of Lemma \ref{lemma: step1}, it can be deduced that
	\begin{align}\label{eq: x^* retrial}
	x^*=\frac{\lambda\left(\gamma+\theta\delta\right)}{1-\lambda\alpha^*}\left[\frac{\lambda\left(\frac{E(T^*)^2}{2}+\frac{\alpha^*}{\theta}\right)}{1-\lambda\alpha^*}+\theta^{-1}\right]\,.
	\end{align}
	To see this, assume that $V(\cdot)$ is a continuous process. Then, observe that
	\begin{equation}
	V(T^*)=x^*+\frac{\lambda(\gamma+\theta\delta)}{1-\lambda\alpha^*}T^*
	\end{equation}
	and hence
	\begin{equation}\label{eq:lastequation}
	ET^*V(T^*)=x^*\alpha^*+\frac{\lambda(\gamma+\theta\delta)}{1-\lambda\alpha^*}E(T^*)^2\,.
	\end{equation}
	In addition, for every $v\in\mathbb{R}$ define 
	
	\begin{align}
	\tilde{\psi}(v)&\equiv \tilde{f}\left(T^*v\right)\\&=E\int_0^{T^*v} \left[V(s)-s\frac{\lambda\left(\gamma+\theta\delta\right)}{1-v\lambda \alpha^*}\right]ds-\frac{\lambda\left(\gamma+\theta\delta\right)v\alpha^*}{\theta\left(1-v\lambda \alpha^*\right)}\,.\nonumber
	\end{align}
	It is known that $\psi(\cdot)$ attains a maximum at $v=1$. In addition, since $V(\cdot)$ is continuous, it is differentiable on a neighborhood of $v=1$. Thus, the corresponding first order condition is given by 
	\begin{align}
	0&=\tilde{\psi}'(v=1)=ET^*V(T^*)\\&\nonumber-\lambda(\gamma+\theta\delta)\left[\frac{E(T^*)^2}{1-\lambda\alpha^*}+\frac{\lambda\alpha^*E(T^*)^2}{2(1-\lambda\alpha^*)^2}+\frac{\alpha^*}{\theta(1-\lambda\alpha^*)}+\frac{\lambda(\alpha^*)^2}{\theta(1-\lambda\alpha^*)^ 2}\right]\,.
	\end{align}
	By reordering this equation and using \eqref{eq:lastequation} deduce that
	\begin{equation}
	0=\alpha^*\left\{x^*-\frac{\lambda(\gamma+\theta\delta)}{1-\lambda\alpha^*}\left[\frac{\lambda\left(\frac{E(T^*)^2}{2}+\frac{\alpha^*}{\theta}\right)}{1-\lambda\alpha^*}+\theta^{-1}\right]\right\}\,.
	\end{equation}
	Thus, $\alpha^*>0$ implies \eqref{eq: x^* retrial}. Now, by an insertion of \eqref{eq: x^* retrial} into \eqref{eq: pi^* retrial} deduce that for every $s\geq0$
	\begin{align}
	\tilde{p}^*_0(s)=\lambda\left(\gamma+\theta\delta\right)z(s)+\int_0^s\xi(t)dt
	\end{align}
	such that 
	\begin{equation}
	z(s)=\frac{s}{1-\lambda\alpha^*}\left[\frac{\lambda\left(\frac{E(T^*)^2}{2}+\frac{\alpha^*}{\theta}\right)}{1-\lambda\alpha^*}+\theta^{-1}\right]+s^2\frac{1}{2\left(1-\lambda\alpha^*\right)}\,.
	\end{equation}
	\begin{remark}
		\normalfont In fact, it is possible to use the same technique which was applied in the proof of Lemma \ref{lemma: x identity}  in order to generalize this result to the case where $V(\cdot)$ might have jumps downwards. 
	\end{remark}
	
	\subsubsection*{Conjecture regarding expected externalities}
	Because this model is similar to the original one, it makes sense that $\tilde{p}^*_0(\cdot)$ internalizes the externalities in this model just like $p^*_0(\cdot)$ does in the original one. Thus, due to the interpretations of $\gamma$ and $\delta$, it is plausible that the expected waiting and retrial  externalities which are caused due to a tagged customer with a service demand of $s\geq0$ minutes are given respectively by $\lambda z(s)$ and $\lambda\theta z(s)$. Since this is not the focus of this work, the proof of this conjecture is left for future research.
	
	\section{Future research}\label{sec: future research}
	This work is focused on  the problem of finding an optimal regulation scheme for queueing models with customers who dynamically decide on their service durations. To the best of the author' s knowledge, this problem has never been discussed in the strategic-queueing literature (for a recent survey, see, \textit{e.g.,} \cite{Hassin2016}). Hence, while this work includes some results, there is still a big room for future research. For example, consider the following questions:
	\begin{enumerate}
		
		\item Are there cases in which the marginal utility is not nonincreasing and the optimal price function can be derived? 
		
		\item What are the results in a similar analysis with several servers or in a processor-sharing system? 
		
		\item Is it possible to derive results  for the case where the arrival process is not Poisson?
		
		\item In the context of Section \ref{sec: balking}, what can be said regarding a similar model with observable queue?  
		
		\item Is it possible to derive analogue results for a model in which reneging is allowed? 
	\end{enumerate}

	\subsubsection*{Acknowledgement:} The author is extremely grateful to Binyamin Oz for valuable discussions as well as for sharing his unpublished manuscript. In addition, the
	author would like to thank Offer Kella, Moshe Haviv and Refael Hassin for
	their comments before the submission. 
	
\section*{Appendix}
\subsection*{Proof of Proposition \ref{prop: long run}}
Assume that $ES_1>0$. For every $k\geq1$ let $N_k$ be the number of customers who receive service during the $k$'th busy period. 
$C_1,C_2,\ldots$ are independent of all other random quantities in this model. Therefore, by conditioning and un-conditioning with respect to this sequence, then a known result regarding the long-run average queue-length in an $M/G/1$ queue implies that the long-run average loss due to waiting time equals to 

\begin{equation}\label{eq: long run 1}
\frac{\gamma\lambda^2 ES_1^2}{2\left(1-\lambda ES_1\right)}\,.
\end{equation}
In addition, $V(\cdot)$ is a nonincreasing process such that $V(0)>0$. Hence, the details $ES_1^2<\infty$ and $EV^2(0)<\infty$ imply that
\begin{align}
E\left[\int_0^{S_1}V_1(s)ds\right]^+ds\leq E\int_0^{S_1}V_1^+(s)ds\leq ES_1V_1(0)<\infty\,.\nonumber
\end{align}
Thus, $E\int_0^{S_1}V_1(s)ds$ is well-defined. Moreover, for every $i\geq1$, $S_i$ is determined by $V_i(\cdot)$ as a solution of \eqref{optimization: individual}. Thus, since $V_1(\cdot),V_2(\cdot),V_3(\cdot)$ is an iid sequence, then  $\left(S_1,V_1(\cdot)\right),\left(S_2,V_2(\cdot)\right),\left(S_3,V_3(\cdot)\right),\ldots$ is an iid sequence. Hence 
\begin{equation}
\int_0^{S_1}V_1(s)ds,\int_0^{S_2}V_2(s)ds,\int_0^{S_3}V_3(s)ds,\ldots
\end{equation}
is also an iid sequence and the rest follows by some standard renewal-reward arguments.  $\blacksquare$

\subsection*{Auxiliary lemmata}
The following auxiliary  Lemma \ref{lemma: approximation} and Lemma \ref{lemma: leibnitz} will be used later on in the proof of Proposition \ref{lemma: existence}. 
\begin{lemma} \label{lemma: approximation}
Let $v:\left[0,\infty\right)\rightarrow\left[0,\infty\right)$ be a nonincreasing right-continuous function. In addition, for every $n\geq1 $ define $v_n(s)\equiv v\ast g_n(s),\forall s\geq0$ such that
\begin{equation}
g_n(u)\equiv n1_{\left[0,\frac{1}{n}\right]}(-u)\ \ ,\ \ \forall u\in\mathbb{R}\,.
\end{equation} 
Then, 
\begin{enumerate}
	\item For every $n\geq1 $, $v_n(\cdot)$ is a continuous, nonnegative and nonincreasing function on $\left[0,\infty\right)$ such that $0\leq v_n(0)\leq v(0)$.
	
	\item For every $s\in\left[0,\infty\right)$,  $v_n(s)\uparrow v(s)$ as $n\to\infty$.   
\end{enumerate} 
\end{lemma}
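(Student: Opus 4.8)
The plan is to analyse the mollified function $v_n = v \ast g_n$ directly from its integral representation. Writing out the convolution, for $s \geq 0$ we have $v_n(s) = \int_{\mathbb{R}} v(s-u)\, g_n(-u)\, du$; since $g_n(-u) = n\, 1_{[0,1/n]}(u)$, a change of variables gives $v_n(s) = n \int_0^{1/n} v(s+u)\, du = n \int_s^{s+1/n} v(t)\, dt$, where I extend $v$ by its right-continuous values on $[0,\infty)$ (note $s+u \geq 0$ throughout, so no ambiguity arises). This closed form is the workhorse for all the claims.

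First I would establish the properties in item 1. Continuity of $v_n$ is immediate: $t \mapsto \int_0^t v$ is continuous (indeed locally Lipschitz, since $v$ is locally bounded — being nonincreasing and finite at $0$), so $v_n(s) = n\big(\int_0^{s+1/n} v - \int_0^s v\big)$ is continuous in $s$. Nonnegativity is clear from $v \geq 0$. Monotonicity: for $s_1 \leq s_2$, the interval $[s_2, s_2+1/n]$ lies to the right of $[s_1, s_1+1/n]$, and since $v$ is nonincreasing, $v(t) \geq v(t')$ whenever $t \in [s_1,s_1+1/n]$ is paired with the corresponding $t' = t + (s_2-s_1) \in [s_2,s_2+1/n]$; averaging gives $v_n(s_1) \geq v_n(s_2)$. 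Finally $v_n(0) = n\int_0^{1/n} v(t)\, dt \leq n \cdot \frac1n \cdot v(0) = v(0)$ because $v(t) \leq v(0)$ on $[0,1/n]$, and $v_n(0) \geq 0$.

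For item 2, fix $s \geq 0$. Since $v$ is nonincreasing, $v(t) \leq v(s)$ for all $t \geq s$, hence $v_n(s) = n\int_s^{s+1/n} v(t)\, dt \leq v(s)$ for every $n$; also $v(t) \geq v(s+1/n)$ on $[s,s+1/n]$ gives the lower bound $v_n(s) \geq v(s+1/n)$. Monotonicity in $n$: I would show $v_n(s) \leq v_{n+1}(s)$ by comparing the averages of $v$ over $[s,s+\frac{1}{n}]$ and over the shorter interval $[s,s+\frac{1}{n+1}]$ — since $v$ is nonincreasing, shrinking the averaging window toward $s$ from the right can only increase the average, so $v_n(s) \uparrow$ in $n$. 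Convergence to $v(s)$ then follows from the sandwich $v(s+1/n) \leq v_n(s) \leq v(s)$ together with right-continuity of $v$ at $s$, which forces $v(s+1/n) \to v(s)$.

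The argument is essentially routine; the only mild subtlety I anticipate is verifying the monotonicity-in-$n$ claim cleanly, i.e. that averaging a nonincreasing function over $[s, s+\varepsilon]$ is itself nonincreasing in $\varepsilon$. This is a standard fact (the average of a monotone function over an interval anchored at one endpoint is monotone in the interval length), and can be dispatched either by the explicit computation $\frac{d}{d\varepsilon}\big(\frac1\varepsilon \int_s^{s+\varepsilon} v\big) = \frac{1}{\varepsilon^2}\big(\varepsilon\, v(s+\varepsilon) - \int_s^{s+\varepsilon} v\big) \leq 0$ (valid a.e., using $v(s+\varepsilon) \leq v(t)$ for $t \in [s,s+\varepsilon]$), or by a direct pairing argument as above; I would include whichever is shorter.
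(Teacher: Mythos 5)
Your proposal is correct and follows essentially the same route as the paper: both identify $v_n(s)$ with the average of $v$ over $\left[s,s+\tfrac{1}{n}\right]$ (the paper writes this as $Ev\left(s+U_n\right)$ for $U_n$ uniform on $\left[0,\tfrac{1}{n}\right]$ and invokes dominated/monotone convergence, while you manipulate the deterministic integral directly). All the steps, including the monotonicity-in-$n$ comparison of window averages that you flag as the one subtlety, are sound.
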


\begin{proof}
\begin{enumerate}
	\item Let $U$ be a random variable which is distributed uniformly on $\left[0,1\right]$ and for every $n\geq1 $ denote $U_n\equiv \frac{U}{n}$. Fix $n\geq1 $ and notice that 
	\begin{equation}
	v_n(s)=Ev\left(s+U_n\right)\ \ , \ \ \forall s\geq0\,.
	\end{equation}
	Recall that $v(\cdot)$ is nonnegative and nonincreasing function, \textit{i.e.,} $v_n(\cdot)$ also shares these properties. In addition, since $U_n$ is nonnegative, then $v_n(0)\leq v(0)$. To show that $v_n(\cdot)$ is continuous on $\left[0,\infty\right)$, pick an arbitrary $s\in\left[0,\infty\right)$ and let $(s_k)_{k=1}^\infty$ be a sequence such that $s_k\to s$ as $k\to\infty$. Then, since $v(\cdot)$ is nonincreasing, $s-U_n$ is $P$-a.s. a continuity point of $v(\cdot)$, \textit{i.e.},  $v\left(s_k+U_n\right)\to v\left(s+U_n\right)$ as $k\to\infty$ when the convergence holds $P$-a.s. In addition, $0\leq v\left(s_k+U_n\right)\leq v(0)<\infty$ for every $k\geq1$ and hence dominated convergence theorem implies that 
	\begin{equation}
	\lim_{k\to\infty}v_n(s_k)=E\lim_{k\to\infty} v\left(s_k+U_n\right)=Ev\left(s+U_n\right)=v_n(s)
	\end{equation}   
	and the result follows. 
	
	\item Fix $s\in\left[0,\infty\right)$ and observe that  $v(\cdot)$ is nonnegative, nonincreasing and right-continuous. Thus, since $U_n\downarrow0$ as $n\to\infty$, then the result follows by monotone convergence theorem.    
\end{enumerate}
\end{proof}

\begin{lemma}\label{lemma: leibnitz}
Let $\left(\textbf{X},\mathcal{X},\mu\right)$ be a general measure space and let $\alpha:\textbf{X}\rightarrow\mathbb{R}$, $\xi:\textbf{X}\times[0,\infty)\rightarrow\mathbb{R}$ such that
\begin{description}
	
	\item[(i)] For every $t\geq0$, $x\mapsto\xi(x,t)$ is $\mathcal{X}$-measurable.
	
	\item[(ii)] For every $x\in\mathbf{X}$, $t\mapsto\xi(x,t)$ is right-continuous on $[0,\infty)$.  
\end{description}
In addition, let $\beta(\cdot)\in L_1(\mu)$ and define 
\begin{equation*}
\zeta(x,t)=\beta(x)+\int_0^t\xi(x,s)d s\ \ , \ \ \forall (x,t)\in\mathbf{X}\times[0,\infty)\,,
\end{equation*} 
\begin{equation*}
\varphi(t):=\int_{\mathbf{X}}\zeta(x,t)\mu(d x) \ \ , \ \ \forall t\in[0,\infty)\,.
\end{equation*}
If at least one of the following conditions hold:
\begin{description}
	\item[C1:] For every $x\in\mathbf{X}$, $t\mapsto\xi(x,t)$ is nonnegative and nonincreasing.
	
	\item[C2:] There exists $\psi(\cdot)\in L_1(\mu)$ such that $|\xi(x,t)|\leq |\psi(x)|$ for every $(x,t)\in \textbf{X}\times[0,\infty)$. 
\end{description}
Then, $\varphi(\cdot)$ is right-differentiable on $[0,\infty)$ such that 
\begin{equation}\label{eq: right-derivative}
\partial_+\varphi(t)=\int_{\textbf{X}}\xi(x,t)\mu(d x)\ \ , \ \ \forall t\in[0,\infty)\,.
\end{equation} 
Moreover, if \textbf{C2} holds and $t\mapsto\xi(x,t)$ is continuous on $(0,\infty)$, then $\varphi(\cdot)$ is differentiable on $(0,\infty)$ such that
\begin{equation}\label{eq: derivative}
\frac{d}{d t}\varphi(t)=\int_{\textbf{X}}\xi(x,t)\mu(d x)\ \ , \ \ \forall t\in(0,\infty)\,.
\end{equation} 
\end{lemma}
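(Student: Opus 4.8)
The plan is to reduce the claim to the elementary fact that the average of a (right-)continuous function over a shrinking interval converges to its value at the left endpoint, and then to push this limit inside $\int_{\mathbf{X}}(\cdot)\,\mu(dx)$ using monotone convergence under \textbf{C1} and dominated convergence under \textbf{C2}. As a preliminary I would check that $\xi$ is jointly $\mathcal{X}\otimes\mathcal{B}([0,\infty))$-measurable: approximating $s$ from the right by dyadic points, $\xi_n(x,s):=\xi\!\left(x,2^{-n}\lceil 2^ns\rceil\right)$ is jointly measurable (a countable sum of products of an $\mathcal{X}$-measurable function with an interval indicator) and converges to $\xi(x,s)$ by right-continuity (ii); hence $x\mapsto\int_0^t\xi(x,s)\,ds$ is $\mathcal{X}$-measurable by Tonelli. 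Consequently $\zeta(\cdot,t)$ and $\varphi(t)$ are well defined --- finite under \textbf{C2} since $|\zeta(x,t)|\le|\beta(x)|+t\,|\psi(x)|\in L_1(\mu)$, and $(-\infty,\infty]$-valued under \textbf{C1} since $\zeta(\cdot,t)\ge\beta\in L_1(\mu)$, in which case the asserted identity is read in $[0,\infty]$.

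For the right-derivative, fix $t\ge0$ and $h>0$. Since $\zeta(x,t+h)-\zeta(x,t)=\int_t^{t+h}\xi(x,s)\,ds$, integrating over $\mathbf{X}$ gives
\begin{equation*}
\frac{\varphi(t+h)-\varphi(t)}{h}=\int_{\mathbf{X}}A_h(x)\,\mu(dx),\qquad A_h(x):=\frac1h\int_t^{t+h}\xi(x,s)\,ds,
\end{equation*}
where under \textbf{C1} this is an identity in the extended reals (using $\zeta(\cdot,t+h)\ge\zeta(\cdot,t)$ together with Tonelli). For each fixed $x$, right-continuity of $s\mapsto\xi(x,s)$ at $t$ gives $A_h(x)\to\xi(x,t)$ as $h\downarrow0$. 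Under \textbf{C1}, monotonicity of $s\mapsto\xi(x,s)$ makes $h\mapsto A_h(x)$ nonincreasing, so $A_h(x)\uparrow\xi(x,t)$, and monotone convergence along $h=1/n$ (the limit being sequence-independent by the monotonicity in $h$) yields $\partial_+\varphi(t)=\int_{\mathbf{X}}\xi(x,t)\,\mu(dx)$. Under \textbf{C2}, $|A_h(x)|\le|\psi(x)|\in L_1(\mu)$ uniformly in $h$, so dominated convergence along any $h_n\downarrow0$ gives the same (now finite) conclusion. This is \eqref{eq: right-derivative}.

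For the two-sided statement, assume \textbf{C2} and that $s\mapsto\xi(x,s)$ is continuous on $(0,\infty)$. Fix $t>0$ and let $h\to0$ with $|h|<t$, keeping the convention $A_h(x)=\frac1h\int_t^{t+h}\xi(x,s)\,ds$ for $h<0$ as well; two-sided continuity of $\xi(x,\cdot)$ at $t$ gives $A_h(x)\to\xi(x,t)$, the bound $|A_h(x)|\le|\psi(x)|$ persists, and dominated convergence yields \eqref{eq: derivative}. The analytically trivial part is the pointwise convergence $A_h(x)\to\xi(x,t)$; the point that needs genuine care is the \textbf{C1} case, where one must (i) legitimize the difference-quotient identity as an equality in $[0,\infty]$ rather than assume finiteness of $\varphi$, and (ii) pass from monotone convergence along the sequence $h=1/n$ to the genuine right-limit $h\downarrow0$ --- which is exactly where the monotonicity of $h\mapsto A_h(x)$, inherited from the hypothesis that $\xi(x,\cdot)$ is nonincreasing, is used. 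The joint-measurability preliminary, while routine, is what licenses the Tonelli/Fubini interchanges throughout.
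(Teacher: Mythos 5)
Your proposal is correct, and it reaches the conclusion by a route that is parallel to but not identical with the paper's. The paper first uses the joint measurability of $(x,t)\mapsto\xi(x,t)$ (cited from Karatzas--Shreve rather than constructed, as you do, by right-dyadic approximation) to apply Fubini/Tonelli and rewrite $\varphi(t)=\int_0^t\eta(s)\,ds$ with $\eta(s)=\int_{\mathbf{X}}\xi(x,s)\,\mu(dx)$; it then proves that $\eta$ is right-continuous (resp.\ continuous) by monotone convergence under \textbf{C1} and dominated convergence under \textbf{C2}, and concludes via the one-sided fundamental theorem of calculus for right-continuous integrands. You instead keep the $\mu$-integral on the outside and differentiate under the integral sign: you identify the difference quotient of $\varphi$ with $\int_{\mathbf{X}}A_h(x)\,\mu(dx)$ for the interval averages $A_h$, and pass to the limit $h\downarrow0$ by the same two convergence theorems. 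The trade-off is that you avoid the FTC step but must instead justify that $h\mapsto A_h(x)$ is monotone under \textbf{C1} (so that the monotone convergence theorem applies and the right-limit is sequence-independent), which you do correctly via the observation that the average of a nonincreasing function over $[t,t+h]$ increases as $h$ shrinks. Both arguments rest on the same joint-measurability preliminary, and both treat the possible non-finiteness of $\varphi$ under \textbf{C1} at the same informal level (in the paper's applications the relevant integrands are dominated by $V(0)\in L_2$, so this is harmless); your explicit acknowledgement of the extended-real reading is if anything slightly more careful than the original.
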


\begin{proof}
For simplicity and without loss of generality the proof is given for the case where $\beta$ is identically zero. In addition, let $\mathcal{B}[0,\infty)$ be a notation of the Bor\'el $\sigma$-field which is associated with $[0,\infty)$. Notice that assumptions \textbf{(i)} and \textbf{(ii)} imply that $(x,t)\mapsto\xi(x,t)$ is $\mathcal{X}\otimes\mathcal{B}[0,\infty)$-measurable. For details see, \textit{e.g.}, Remark 1.4  on page 5 of \cite{Karatzas1988}.

Now,  observe that under either \textbf{C1} or \textbf{C2}, Fubini's theorem may be applied in order to deduce that
\begin{equation*}
\eta(t):=\int_{\mathbf{X}}\xi(x,t)\mu(d x)\ \ , \ \ \forall t\in[0,\infty)
\end{equation*}
is $\mathcal{B}[0,\infty)$-measurable and satisfies 
\begin{align*}
\varphi(t)&=\int_{\mathbf{X}}\zeta(x,t)\mu(d x)\\&=\int_{\mathbf{X}}\int_0^t\xi(x,s)d s\mu(d x)=\int_0^t\eta(s)d s\ \ , \ \ \forall t\in[0,\infty)\,.
\end{align*}
Thus, in order to show \eqref{eq: right-derivative}, it is enough to prove that $\eta(\cdot)$ is right-continuous on $[0,\infty)$. To this end, by \textbf{(ii)}, under \textbf{C1} (\textbf{C2}) monotone (dominated) convergence theorem implies that
\begin{equation*}
\lim_{s\downarrow t}\eta(s)=\int_\textbf{X}\lim_{s\downarrow t}\xi(x,s)d s=\int_\textbf{X}\xi(x,t)d s=\eta(t)\ \ , \ \ \forall t\in[0,\infty)
\end{equation*}
and the result follows. Finally, observe that in order to show \eqref{eq: derivative}, it is enough to prove that $\eta(\cdot)$ is continuous on $(0,\infty)$. This can be done by using similar arguments.	
\end{proof}	

\begin{remark}
	\normalfont Note that once $\left(\textbf{X},\mathcal{X},\mu\right)$ is complete, the conclusion of Lemma \ref{lemma: leibnitz} remains valid even when the requirements which appear in \textbf{(ii)}, \textbf{C1} and \textbf{C2} are satisfied $\mu$-a.s. (instead of pointwise). 
\end{remark}

\subsection*{Proof of Proposition \ref{lemma: existence}} 
The proof of Proposition \ref{lemma: existence} is given by the subsequent lemmata:
\begin{lemma}\label{lemma: nonnegative x}
	Assume that $\alpha\in\left[0,\lambda^{-1}\right)$ such that $x_\alpha<0$. Then, there exists $\tilde{\alpha}\in\left[0,\lambda^{-1}\right)$ such that $x_{\tilde{\alpha}}\geq0$ and $g\left(\alpha\right)\leq g\left(\tilde{\alpha}\right)$. 
\end{lemma}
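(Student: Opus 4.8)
The plan is to take as witness $\tilde\alpha:=ES_{\alpha}(0)$, the mean of the zero-threshold stopping rule for the boundary $V(s)-\tfrac{\gamma\lambda s}{1-\lambda\alpha}$, and to verify its two properties by combining a single pathwise truncation estimate with monotonicity in the parameter. For $\beta\in[0,\lambda^{-1})$ write $h_{\beta}(s):=V(s)-\tfrac{\gamma\lambda s}{1-\lambda\beta}$, a nonincreasing right-continuous function of $s$ with $h_{\beta}(0)=V(0)$. First I would record the easy facts. Since $x_{\alpha}<0$ is assumed, necessarily $\alpha>0$ (one always has $x_{0}\ge0$ because $V(0)\ge0$). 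By \eqref{eq: bound on zeta}, $0\le S_{\alpha}(0)\le V(0)/(\gamma\lambda)$, which is square-integrable, so $S_{\alpha}(0)\in\mathcal D$ and $\tilde\alpha<\infty$; moreover $h_{\alpha}(S_{\alpha}(0))\le0$ by right-continuity, hence $h_{\alpha}(s)\le0$ for every $s\ge S_{\alpha}(0)$ by monotonicity. Finally, $x\mapsto ES_{\alpha}(x)$ is nonincreasing and $ES_{\alpha}(x_{\alpha})=\alpha$ with $x_{\alpha}<0$, so $\tilde\alpha=ES_{\alpha}(0)\le\alpha<\lambda^{-1}$, i.e.\ $\tilde\alpha\in[0,\lambda^{-1})$ is an admissible parameter.

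The inequality $g(\alpha)\le g(\tilde\alpha)$ would then follow from the chain
\begin{align*}
g(\alpha)&=E\int_{0}^{S_{\alpha}(x_{\alpha})}h_{\alpha}(s)\,ds\le E\int_{0}^{S_{\alpha}(0)}h_{\alpha}(s)\,ds\\
&\le E\int_{0}^{S_{\alpha}(0)}h_{\tilde\alpha}(s)\,ds=f\bigl(S_{\alpha}(0)\bigr)\le g(\tilde\alpha)\,.
\end{align*}
The first inequality is the truncation estimate: $S_{\alpha}(0)\le S_{\alpha}(x_{\alpha})$ because a lower level is hit no later, and $\int_{S_{\alpha}(0)}^{S_{\alpha}(x_{\alpha})}h_{\alpha}(s)\,ds\le0$ because $h_{\alpha}\le0$ on that interval; all three integrals are integrable by \eqref{eq: bound on zeta} and $EV^{2}(0)<\infty$, so the splitting and the passage to expectation are legitimate. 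The second inequality uses $\tilde\alpha\le\alpha$, hence $h_{\tilde\alpha}\ge h_{\alpha}$ pointwise. The equality is the definition of $f$ evaluated at $S_{\alpha}(0)$, since $ES_{\alpha}(0)=\tilde\alpha$. The last inequality is the optimality asserted in Phase I at the parameter $\tilde\alpha$ (Theorem 1 of \cite{Jacobovic2020}): $S_{\alpha}(0)\ge0$ and $ES_{\alpha}(0)=\tilde\alpha$ make it feasible for \eqref{optimization: phase I} with $\alpha$ replaced by $\tilde\alpha$, whose value is $g(\tilde\alpha)$. (If $\tilde\alpha=0$ this is vacuous: then $V(0)=0$ $P$-a.s., so $h_{\alpha}\le0$ everywhere and $g(\alpha)\le0=g(0)$, while $x_{0}=0$ is admissible.)

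It remains to show $x_{\tilde\alpha}\ge0$. From $\tilde\alpha\le\alpha$ we have $h_{\tilde\alpha}\ge h_{\alpha}$, whence the pointwise inequality $S_{\tilde\alpha}(0)\ge S_{\alpha}(0)$ and therefore $ES_{\tilde\alpha}(0)\ge ES_{\alpha}(0)=\tilde\alpha$. Phase I at $\tilde\alpha$ supplies a threshold $x_{\tilde\alpha}$ with $ES_{\tilde\alpha}(x_{\tilde\alpha})=\tilde\alpha$; were $x_{\tilde\alpha}<0$, monotonicity of $x\mapsto ES_{\tilde\alpha}(x)$ would give $ES_{\tilde\alpha}(0)\le\tilde\alpha$, so $ES_{\tilde\alpha}(0)=\tilde\alpha$, and then $S_{\tilde\alpha}(0)\le S_{\tilde\alpha}(x_{\tilde\alpha})$ with equal mean forces $S_{\tilde\alpha}(0)=S_{\tilde\alpha}(x_{\tilde\alpha})$ $P$-a.s.; the same optimal rule is then represented by the threshold $0$, so we may take $x_{\tilde\alpha}=0$.

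I expect the only genuine work to be bookkeeping --- checking that each integral in the displayed chain is well defined, which is precisely where \eqref{eq: bound on zeta} and the square-integrability $EV^{2}(0)<\infty$ (via \eqref{eq: regularity condition}) are used --- together with cleanly isolating the degenerate case $\tilde\alpha=0$, i.e.\ $V(0)=0$ $P$-a.s. There is no deeper obstacle: the statement is in essence the remark that overshooting a nonnegative boundary can only lower the objective, while decreasing $\alpha$ lowers the linear part of the boundary and hence can only raise it.
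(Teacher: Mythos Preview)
Your argument is correct and follows the same route as the paper: the witness is $\tilde\alpha=ES_\alpha(0)$, and the chain of inequalities (truncation at the zero level, then monotonicity in the parameter, then Phase~I optimality at $\tilde\alpha$) is identical to the paper's. Your justification of $x_{\tilde\alpha}\ge0$ via the pointwise comparison $S_{\tilde\alpha}(0)\ge S_\alpha(0)$ and the edge-case reduction is in fact more explicit than the paper's terse appeal to ``$\alpha\mapsto x_\alpha$ nonincreasing''; note also that the degenerate case $\tilde\alpha=0$ cannot occur under the standing assumption $V(0)=X(0)>0$ $P$-a.s.
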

\begin{proof}
	Let $\alpha$ be such that $x_\alpha<0$ and define 
	\begin{equation}
	\tilde{S}\equiv\inf\left\{s\geq0;V(s)-\frac{\gamma\lambda}{1-\lambda\alpha}s\leq0\right\}\,.
	\end{equation}
	Notice that $\tilde{S}\leq S_\alpha$ and hence $\tilde{\alpha}\equiv E\tilde{S}\leq \alpha$. This implies that
	\begin{align}
	g\left(\tilde{\alpha}\right)&\geq f\left(\tilde{S}\right)\\&=E\int_0^{\tilde{S}}\left[V(s)-\frac{\gamma\lambda}{1-\lambda\tilde{\alpha}}s\right]ds\nonumber\\&\geq E\int_0^{\tilde{S}}\left[V(s)-\frac{\gamma\lambda}{1-\lambda\alpha}s\right]ds\nonumber\\&\geq E\int_0^{S_{\alpha}}\left[V(s)-\frac{\gamma\lambda}{1-\lambda\alpha}s\right]ds=g\left(\alpha\right)\,.\nonumber
	\end{align}
	In addition, notice that $\tilde{\alpha}\leq\alpha$, $\tilde{\alpha}=E\tilde{S}=ES_{\tilde{\alpha}}$ and  
	\begin{equation}
	S_{\tilde{\alpha}}=\inf\left\{s\geq0;V(s)-\frac{\gamma\lambda}{1-\lambda\tilde{\alpha}}s\leq x_{\tilde{\alpha}}\right\}\,.
	\end{equation}
	Thus, since $\alpha\mapsto x_\alpha$ is nonincreasing, then  $x_{\tilde{\alpha}}$ is nonnegative and the result follows.
\end{proof}

\begin{lemma}\label{lemma: S bound}
	Consider some $\alpha\in\left[0,\lambda^{-1}\right)$.
	\begin{enumerate}
		\item If $x_\alpha\geq0$,  then, $0\leq S_\alpha\leq\frac{V(0)}{\gamma\lambda}$, and $g(\alpha)\geq0$.
		
		\item For every $\alpha\in\left[0,\lambda^{-1}\right)$
		\begin{equation}
		g\left(\alpha\right)\leq\frac{EV^2(0)}{\gamma\lambda}<\infty\,.
		\end{equation}
	\end{enumerate}  
\end{lemma}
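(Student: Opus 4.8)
The plan is to establish both parts by elementary pointwise estimates on the random function $Y_\alpha(s)\equiv V(s)-\frac{\gamma\lambda s}{1-\lambda\alpha}$, using only that $V(\cdot)$ is nonincreasing (so $V(s)\le V(0)$ for all $s\ge0$), that $V(0)\ge0$, and that $\lambda\alpha\in[0,1)$ forces $\frac{1}{1-\lambda\alpha}\ge1$, hence $\frac{\gamma\lambda s}{1-\lambda\alpha}\ge\gamma\lambda s$ for every $s\ge0$. Recall that by the construction in Phase I, $S_\alpha=S_\alpha(x_\alpha)$ with $ES_\alpha=\alpha<\lambda^{-1}$, so $S_\alpha\in\mathcal{S}$ and $g(\alpha)=f(S_\alpha)=E\int_0^{S_\alpha}Y_\alpha(s)\,ds$ is well-defined by \eqref{eq: regularity condition}.

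For Part 1, assume $x_\alpha\ge0$. First I would bound $S_\alpha$ from above by testing the single point $s_0\equiv V(0)/(\gamma\lambda)$ (a finite, $\omega$-dependent quantity): since $V(s_0)\le V(0)$ and $\frac{\gamma\lambda s_0}{1-\lambda\alpha}=\frac{V(0)}{1-\lambda\alpha}\ge V(0)$, we get $Y_\alpha(s_0)\le V(0)-V(0)=0\le x_\alpha$, so $s_0$ belongs to the (hence nonempty) set $\{s\ge0:Y_\alpha(s)\le x_\alpha\}$ and therefore $0\le S_\alpha\le s_0=V(0)/(\gamma\lambda)$. For the inequality $g(\alpha)\ge0$: by definition of the infimum, every $s\in[0,S_\alpha)$ fails the defining inequality, i.e.\ $Y_\alpha(s)>x_\alpha\ge0$, so $\int_0^{S_\alpha}Y_\alpha(s)\,ds\ge0$ holds pointwise, and taking expectations yields $g(\alpha)\ge0$.

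For Part 2, fix an arbitrary $\alpha\in[0,\lambda^{-1})$ (no sign assumption on $x_\alpha$ is needed). Discarding the negative part of the integrand and enlarging the integration range, then using $V(s)\le V(0)$ together with $\frac{\gamma\lambda s}{1-\lambda\alpha}\ge\gamma\lambda s$, one obtains $\int_0^{S_\alpha}Y_\alpha(s)\,ds\le\int_0^\infty Y_\alpha^+(s)\,ds\le\int_0^\infty[V(0)-\gamma\lambda s]^+\,ds$. Since $V(0)\ge0$, the elementary identity $\int_0^\infty[c-as]^+\,ds=\frac{c^2}{2a}$ (valid for $c\ge0$, $a>0$) gives the right-hand side equal to $\frac{V^2(0)}{2\gamma\lambda}\le\frac{V^2(0)}{\gamma\lambda}$; taking expectations and invoking $EV^2(0)<\infty$ then yields $g(\alpha)\le\frac{EV^2(0)}{\gamma\lambda}<\infty$.

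No step here is a genuine obstacle; the only care needed is bookkeeping — confirming $S_\alpha\in\mathcal{S}$ so that $g(\alpha)$ is well-defined (already provided by Phase I and \eqref{eq: regularity condition}), and observing that the pointwise inequalities above hold for $P$-a.e.\ $\omega$, which is immediate from the monotonicity of $V(\cdot)$ and the nonnegativity of $V(0)$. One could in fact keep the sharper constant $\frac{EV^2(0)}{2\gamma\lambda}$ in Part 2, but the stated bound is all that is used in the sequel.
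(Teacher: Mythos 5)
Your proof is correct. Part 1 matches the paper's argument essentially verbatim: the bound $0\le S_\alpha\le V(0)/(\gamma\lambda)$ follows because $\frac{1}{1-\lambda\alpha}\ge1$ lets you compare $S_\alpha$ with the hitting time of level $0$ by $V(s)-\gamma\lambda s$, and $g(\alpha)\ge0$ follows because the integrand exceeds $x_\alpha\ge0$ on $[0,S_\alpha)$ by definition of the infimum. Where you diverge is Part 2. The paper first proves the upper bound only under the hypothesis $x_\alpha\ge0$, via the chain $g(\alpha)\le EV(0)S_\alpha\le EV^2(0)/(\gamma\lambda)$ (using the bound on $S_\alpha$ from Part 1), and then extends it to arbitrary $\alpha$ by invoking Lemma \ref{lemma: nonnegative x}, which supplies a $\tilde\alpha$ with $x_{\tilde\alpha}\ge0$ and $g(\alpha)\le g(\tilde\alpha)$. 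You instead bound the positive part of the integrand directly, $\int_0^{S_\alpha}Y_\alpha(s)\,ds\le\int_0^\infty[V(0)-\gamma\lambda s]^+\,ds=V^2(0)/(2\gamma\lambda)$, which is exactly the estimate the paper already records in \eqref{eq: regularity condition 0} to show $f$ is well-defined. Your route is more self-contained (no appeal to Lemma \ref{lemma: nonnegative x}), works uniformly in $\alpha$ with no sign condition on $x_\alpha$, and yields the sharper constant $EV^2(0)/(2\gamma\lambda)$; the paper's route recycles the $S_\alpha$ bound from Part 1 but at the cost of routing the general case through the auxiliary monotonicity lemma. Both are valid, and the weaker stated constant is all that is needed downstream.
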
 

\begin{proof}
	Let $\alpha\in\left[0,\lambda^{-1}\right)$ such that $x_\alpha\geq0$ and notice that
	\begin{align}
	S_\alpha&=\inf\left\{s\geq0;V(s)-\frac{\gamma\lambda}{1-\lambda\alpha}s\leq x_\alpha\right\}\\&\leq \inf\left\{s\geq0;V(s)-\gamma\lambda s\leq 0\right\}\,.
	\end{align}
	Therefore, since $V(\cdot)$ is nonincreasing, deduce that 
	\begin{equation}
	0\leq S_\alpha\leq \frac{V(0)}{\gamma\lambda}
	\end{equation}  
	and hence
	\begin{align}
	g\left(\alpha\right)&=E\int_0^{S_\alpha}\left[V(s)-\frac{\gamma\lambda}{1-\lambda\alpha}s\right]ds\\&\leq EV(0)S_\alpha\leq\frac{EV^2(0)}{\gamma\lambda}<\infty\nonumber\,.
	\end{align}
	Then, use Lemma \ref{lemma: nonnegative x} in order to show that this upper bound holds for every $\alpha\in\left[0,\lambda^{-1}\right)$. Finally, to show that $g(\alpha)\geq0$ for every $\alpha$ for which $x_\alpha\geq0$ observe that in such a case, $g(\alpha)$ is defined as an expectation of an integral with an integrand which is nonnegative on the integration domain.   
\end{proof}
\begin{lemma}\label{lemma: square-integrable}
	For every $\alpha\in\left[0,\lambda^{-1}\right)$, $S_\alpha$ is square-integrable.
\end{lemma}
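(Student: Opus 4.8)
The plan is to reduce the problem to the bound \eqref{eq: bound on zeta} established in Phase I. Recall that for $\alpha \in [0,\lambda^{-1})$ we have defined $S_\alpha = S_\alpha(x_\alpha)$, where $x_\alpha$ is the Lagrange-type constant produced by Theorem 1 of \cite{Jacobovic2020}. The key structural fact is that, regardless of the sign of $x_\alpha$, the stopping time $S_\alpha$ is dominated pathwise by a deterministic affine function of $V(0)$: indeed \eqref{eq: bound on zeta} gives $0 \le S_\alpha(x) \le \frac{V(0) + |x|}{\gamma\lambda}$ for every $x \in \mathbb{R}$. So the first step is simply to fix the value $x_\alpha$ and apply \eqref{eq: bound on zeta} with $x = x_\alpha$, obtaining
\begin{equation*}
0 \le S_\alpha \le \frac{V(0) + |x_\alpha|}{\gamma\lambda}\,.
\end{equation*}

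The second step is to square this inequality and take expectations. Since $(a+b)^2 \le 2a^2 + 2b^2$ for real $a,b$, we get
\begin{equation*}
E S_\alpha^2 \le \frac{2\,EV^2(0) + 2x_\alpha^2}{\gamma^2\lambda^2}\,,
\end{equation*}
and the right-hand side is finite because $EV^2(0) = EX^2(0) < \infty$ by the standing assumption that $X(0)$ is square-integrable, while $x_\alpha$ is a fixed real number (not a random variable). This already gives square-integrability of $S_\alpha$ for each fixed $\alpha$, which is exactly what the lemma asserts.

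There is essentially no obstacle here; the only point requiring a word of care is that in the case $x_\alpha < 0$ one cannot use the cleaner bound $S_\alpha \le V(0)/(\gamma\lambda)$ from Lemma \ref{lemma: S bound}, so one must fall back on the version of \eqref{eq: bound on zeta} that carries the $|x|$ term — but since $|x_\alpha|$ is a finite constant this changes nothing. If one prefers, one may alternatively invoke Lemma \ref{lemma: nonnegative x} to replace $\alpha$ by some $\tilde\alpha$ with $x_{\tilde\alpha} \ge 0$, but this is unnecessary for the present claim since we are not comparing objective values, merely bounding a single moment. I would therefore simply write the two displays above and conclude.
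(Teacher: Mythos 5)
Your proposal is correct and rests on the same underlying estimate as the paper: the pathwise bound $S_\alpha\leq (V(0)+|x_\alpha|)/(\gamma\lambda)$ combined with $EV^2(0)<\infty$. The paper merely reaches that bound by a slightly different route (shifting $V$ to $\tilde V=V-x_\alpha$ so as to reduce to the $x_\alpha\ge 0$ case of Lemma \ref{lemma: S bound}), whereas you invoke \eqref{eq: bound on zeta} directly with $x=x_\alpha$; both are valid, and the trivial case $\alpha=0$ (where $S_0=0$) is the only point left implicit in your write-up.
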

\begin{proof}
	Let $\alpha\in\left[0,\lambda^{-1}\right)$. If $x_\alpha\geq0$, then the result is a consequence of Lemma \ref{lemma: S bound} and hence it is left to consider the case when $x_\alpha\in(-\infty,0)$. To this end,  define $\tilde{V}(s)=V(s)-x_\alpha,\forall s\geq0$ and observe that for every $S\in\mathcal{S}$ such that $ES=\alpha$, 
	\begin{equation*}
	E\int_0^S\left[\tilde{V}(s)-\frac{\gamma\lambda}{1-\lambda\alpha}s\right]ds=-\alpha x_\alpha+f\left(S\right)\,.
	\end{equation*}
	This means that $S_\alpha$ is also a solution of
	\begin{equation} 
	\begin{aligned}
	& \max_{S\in\mathcal{F}}:
	& &E\int_0^S\left[ \tilde{V}(s)-\frac{ \gamma\lambda}{1- \lambda\alpha}s\right]ds \\
	& \ \text{s.t:}
	& & 0\leq S \ , \ P \text{-a.s.}\,, \\ & & & ES=\alpha\,.
	\end{aligned}
	\end{equation}
	In addition, note that $\tilde{V}(\cdot)$ is nonincreasing right-continuous process such that $\tilde{V}(0)=V(0)-x_\alpha$ is a square-integrable positive random variable. Consequently, Lemma \ref{lemma: S bound} implies the result because by definition
	\begin{equation}
	S_\alpha=\inf\left\{s\geq0;\tilde{V}(s)-\frac{\gamma\lambda}{1-\lambda\alpha}s\leq0\right\}\,.
	\end{equation}	  
\end{proof}

Note that for every $S\geq0$ which is square integrable
\begin{equation}\label{eq: f square-integrable}
f(S)=E\int_0^SV(s)ds-\frac{\gamma\lambda ES^2}{2\left(1-\lambda\alpha\right)}\,.
\end{equation}

\begin{lemma} \label{lemma: phase II convex}
	$f(\cdot)$ is concave on $\mathcal{D}$ and
	$g(\cdot)$ is concave on $\left[0,\lambda^{-1}\right)$.	
\end{lemma}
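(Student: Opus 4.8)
The plan is to establish concavity of $f$ on $\mathcal{D}$ first and then deduce concavity of $g$ from it. One begins by noting that $\mathcal{D}$ is convex: for $S_0,S_1\in\mathcal{D}$ and $\theta\in[0,1]$ the random variable $S_\theta\equiv\theta S_1+(1-\theta)S_0$ is nonnegative and measurable, has $ES_\theta=\theta ES_1+(1-\theta)ES_0<\lambda^{-1}$, and satisfies $ES_\theta^2\le\theta ES_1^2+(1-\theta)ES_0^2<\infty$ by convexity of $x\mapsto x^2$. I then treat the two summands of $f$ in \eqref{eq: f square-integrable} separately.

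For the reward term, observe that for each fixed $\omega$ the map $s\mapsto\int_0^s V(r,\omega)\,dr$ is concave because $V(\cdot,\omega)$ is nonincreasing; hence, pathwise,
\[
\int_0^{S_\theta}V(r)\,dr\ \ge\ \theta\int_0^{S_1}V(r)\,dr+(1-\theta)\int_0^{S_0}V(r)\,dr.
\]
Since $E\int_0^S V^+(r)\,dr\le ES\cdot V(0)<\infty$ on $\mathcal{D}$ (Cauchy--Schwarz together with $EV^2(0)<\infty$), these expectations are well defined in $[-\infty,\infty)$ and taking expectations preserves the inequality, so $S\mapsto E\int_0^S V(r)\,dr$ is concave on $\mathcal{D}$.

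The crux is convexity of $S\mapsto ES^2/(1-\lambda ES)$ on $\mathcal{D}$, equivalently concavity of the penalty $-\tfrac{\gamma\lambda}{2}\,ES^2/(1-\lambda ES)$. This does \emph{not} reduce to a ``convex function of convex functions'' argument, because $(u,v)\mapsto v/(1-\lambda u)$ has a Hessian with negative determinant and so is not jointly convex; the argument must use the special form of the second moment along a segment. Fix $S_0,S_1\in\mathcal{D}$ and write $\mu_i=ES_i$, $a_i=1-\lambda\mu_i>0$, $q_i=ES_i^2$, $c=E[S_0S_1]$. Then $1-\lambda ES_\theta=\theta a_1+(1-\theta)a_0>0$ and $ES_\theta^2=\theta^2q_1+2\theta(1-\theta)c+(1-\theta)^2q_0$, so the target inequality
\[
\frac{ES_\theta^2}{1-\lambda ES_\theta}\ \le\ \theta\frac{q_1}{a_1}+(1-\theta)\frac{q_0}{a_0},
\]
after clearing the positive denominator $\theta a_1+(1-\theta)a_0$, cancelling the common $\theta^2q_1+(1-\theta)^2q_0$, and dividing by $\theta(1-\theta)>0$ (the cases $\theta\in\{0,1\}$ being trivial), becomes $2c\le \tfrac{a_1q_0}{a_0}+\tfrac{a_0q_1}{a_1}$. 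This last inequality follows from AM--GM, $\tfrac{a_1q_0}{a_0}+\tfrac{a_0q_1}{a_1}\ge 2\sqrt{q_0q_1}$, combined with Cauchy--Schwarz, $c=E[S_0S_1]\le\sqrt{ES_0^2\,ES_1^2}=\sqrt{q_0q_1}$. Adding the two concave summands shows $f$ is concave on $\mathcal{D}$ (it is bounded above there, with $-\infty$ admitted as a value, which does not affect the notion of concavity).

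Finally, for $g$: given $\alpha_0,\alpha_1\in[0,\lambda^{-1})$ and $\theta\in[0,1]$, put $\alpha_\theta=\theta\alpha_1+(1-\theta)\alpha_0$ and $\tilde S\equiv\theta S_{\alpha_1}+(1-\theta)S_{\alpha_0}$. By Lemma \ref{lemma: square-integrable} each $S_{\alpha_i}\in\mathcal{D}$, hence $\tilde S\in\mathcal{D}$ with $E\tilde S=\alpha_\theta$, so $\tilde S$ is feasible for \eqref{optimization: phase I} with parameter $\alpha_\theta$, on whose feasible set $f$ coincides with the Phase I objective. Since $S_{\alpha_\theta}$ solves that problem,
\[
g(\alpha_\theta)=f(S_{\alpha_\theta})\ \ge\ f(\tilde S)\ \ge\ \theta f(S_{\alpha_1})+(1-\theta)f(S_{\alpha_0})=\theta g(\alpha_1)+(1-\theta)g(\alpha_0),
\]
the middle inequality being the concavity of $f$ just proved. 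The main obstacle is exactly the convexity of the Pollaczek--Khinchine term: it is false at the level of the moment map, yet becomes true along segments in $\mathcal{D}$ once Cauchy--Schwarz is used to bound the cross-moment; everything else is bookkeeping.
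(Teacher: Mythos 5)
Your proof is correct, but it reaches the key convexity by a genuinely different route than the paper. The paper works \emph{jointly} in the pair $(S,\alpha)$: it observes that the perspective function $(t,s)\mapsto t^2/s$ is jointly convex on $\mathbb{R}\times(0,\infty)$, so that $h(S,\alpha)=\int_0^S V(s)\,ds-\tfrac{\gamma\lambda S^2}{2(1-\lambda\alpha)}$ is pathwise concave on $\mathcal{D}\times[0,\lambda^{-1})$, hence so is $H=Eh$ after taking expectations; concavity of $f$ then follows by restricting $H$ to the affine graph $\alpha=ES$, and concavity of $g$ follows in one line from the standard fact that a partial supremum of a jointly concave functional over a convex set is concave. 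You instead note (correctly) that the moment-level map $(u,v)\mapsto v/(1-\lambda u)$ is \emph{not} jointly convex, and so you verify the segment inequality for $S\mapsto ES^2/(1-\lambda ES)$ by hand, expanding $ES_\theta^2$ and controlling the cross term $E[S_0S_1]$ via Cauchy--Schwarz and AM--GM; your argument for $g$ is the explicit unwinding of the same partial-maximization fact, using feasibility of $\theta S_{\alpha_1}+(1-\theta)S_{\alpha_0}$ for \eqref{optimization: phase I} at $\alpha_\theta$ together with Lemma \ref{lemma: square-integrable}. The paper's lifting to $(S,\alpha)$ is slicker and makes both halves of the lemma fall out of one joint-concavity statement (it is precisely the device that circumvents the non-convexity you flag at the moment level); your version is more elementary and self-contained, at the cost of an explicit algebraic computation, and it makes transparent exactly where the probabilistic input (the cross-moment bound) enters. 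Two cosmetic points: in the bound for the reward term you should write $E\left[S\,V(0)\right]$ rather than $ES\cdot V(0)$, since $V(0)$ is random and the finiteness comes from $E\left[S V(0)\right]\le\sqrt{ES^2\,EV^2(0)}$; and on $\mathcal{D}$ the penalty term is finite by definition of $\mathcal{D}$, so admitting the value $-\infty$ is only ever needed for the reward term, exactly as you indicate.
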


\begin{proof}
	Define 	
	\begin{equation}
	\mathcal{S}_0\equiv\mathcal{D}\times\left[0,\lambda^{-1}\right)
	\end{equation} 
	and for every $(S,\alpha)\in\mathcal{S}_0$ denote 
	\begin{equation}
	h(S,\alpha)\equiv  \int_0^SV(s)ds-\frac{\gamma\lambda S^2}{2\left(1-\lambda\alpha\right)}\,.
	\end{equation}
	In particular, observe that $V(\cdot)$ is nonincreasing right-continuous process which implies that $s\mapsto\int_0^sV(t)dt$ is concave on $[0,\infty)$. Therefore, since $(t,s)\mapsto\frac{t^2}{s}$ is convex on $\mathbb{R}\times(0,\infty)$, then $h(S,\alpha)$ is concave on $\mathcal{S}_0$. Thus, since an expectation is a linear operator, then  
	\begin{equation}
	H(S,\alpha)\equiv Eh(S,\alpha)=E\int_0^SV(s)ds-\frac{\gamma\lambda ES^2}{2\left(1-\lambda\alpha\right)}\ , \ \forall(S,\alpha)\in\mathcal{S}_0
	\end{equation} 
	is a concave functional on $\mathcal{S}_0$. Especially, notice that $(S,\alpha)\in\mathcal{S}_0$ implies that $S$ is square-integrable and hence \eqref{eq: f square-integrable} could be used in order to justify the last equality. Now, consider $S_1,S_2\in\mathcal{D}$ and for every $i=1,2$ denote $\alpha_i\equiv ES_i$. Thus, observe that the concavity of $H(\cdot)$ implies that for every $\mu\in(0,1)$
	\begin{align*}
	f\left[S_2+\mu(S_1-S_2)\right]&=H\left[S_2+\mu(S_1-S_2),\alpha_2+\mu(\alpha_1-\alpha_2)\right]\\&\geq\mu H\left(S_1,\alpha_1\right)+(1-\mu)H\left(S_2,\alpha_2\right)\\&=\mu f(S_1)+(1-\mu)f(S_2)
	\end{align*}
	and hence the concavity of $f(\cdot)$ follows by definition. 
	
	In order to prove the concavity of $g(\cdot)$, recall Lemma \ref{lemma: square-integrable} which implies that for every $\alpha\in[0,\lambda^{-1})$
	\begin{equation} \label{eq: f(alpha) definition}
	g(\alpha)=\sup\bigg\{H(S,\alpha);S\in\mathcal{D}\ , \ ES=\alpha\bigg\}
	\end{equation}  
	and hence the result follows because $g(\cdot)$ equals to a supremum of a concave functional on a convex set which is not empty (take, \textit{e.g.}, $(\alpha,\alpha)$).
\end{proof}

\begin{lemma}\label{lemma: phase II continuous}
	$\lim_{\alpha\downarrow0}g(\alpha)=g(0)=0$.  
\end{lemma}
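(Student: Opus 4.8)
The equality $g(0)=0$ is immediate, since the constraints $S\geq 0$ and $ES=0$ force $S_{0}=0$ $P$-a.s., whence $g(0)=f(0)=0$. The content of the lemma is therefore the right-continuity of $g$ at the origin, which I would split into $\liminf_{\alpha\downarrow 0}g(\alpha)\geq 0$ and $\limsup_{\alpha\downarrow 0}g(\alpha)\leq 0$. For the upper bound, note that $V(\cdot)$ is nonincreasing with $V(0)>0$ and that $\frac{\gamma\lambda}{1-\lambda\alpha}s\geq 0$, so that for every $\alpha\in\left[0,\lambda^{-1}\right)$,
\[
g(\alpha)=E\int_{0}^{S_{\alpha}}\left[V(s)-\frac{\gamma\lambda}{1-\lambda\alpha}s\right]ds\;\leq\;E\int_{0}^{S_{\alpha}}V(s)\,ds\;\leq\;E\left[V(0)\,S_{\alpha}\right].
\]
Since $ES_{\alpha}=\alpha$, the problem reduces to proving that $E[V(0)S_{\alpha}]\to 0$ as $\alpha\downarrow 0$, and the one ingredient still missing for this is an integrable majorant of $V(0)S_{\alpha}$ that is uniform for $\alpha$ near $0$.

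\textbf{The main step} is to show that $x_{\alpha}>0$ for every $\alpha$ in a right-neighborhood of the origin; granting this, Lemma~\ref{lemma: S bound}(1) delivers at once the nonnegativity $g(\alpha)\geq 0$ (hence the $\liminf$ estimate) and the domination $0\leq S_{\alpha}\leq V(0)/(\gamma\lambda)$. Recall that $x\mapsto S_{\alpha}(x)$, and hence $x\mapsto ES_{\alpha}(x)$, is nonincreasing, and that $x_{\alpha}$ satisfies $ES_{\alpha}(x_{\alpha})=\alpha$; thus it suffices to produce $\delta>0$ with $ES_{\alpha}(0)>\alpha$ for all $\alpha\in(0,\delta)$. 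Fix $\delta_{0}\in(0,\lambda^{-1})$; since $\{s\geq 0; V(s)\leq\frac{\gamma\lambda}{1-\lambda\alpha}s\}\subseteq\{s\geq 0; V(s)\leq\frac{\gamma\lambda}{1-\lambda\delta_{0}}s\}$ for $\alpha\leq\delta_{0}$, one has $S_{\alpha}(0)\geq S_{\delta_{0}}(0)$, and therefore $ES_{\alpha}(0)\geq m_{0}$ with $m_{0}\equiv E\inf\left\{s\geq 0; V(s)\leq\frac{\gamma\lambda}{1-\lambda\delta_{0}}s\right\}$. Because $V(\cdot)$ is right-continuous and $V(0)>0$ $P$-a.s., the infimum in the definition of $m_{0}$ is $P$-a.s. strictly positive, so $m_{0}>0$; taking $\delta\equiv\min\{\delta_{0},m_{0}\}$ gives $ES_{\alpha}(0)\geq m_{0}>\alpha=ES_{\alpha}(x_{\alpha})$ for $\alpha\in(0,\delta)$, and monotonicity of $x\mapsto ES_{\alpha}(x)$ then forces $x_{\alpha}>0$.

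Finally I would cash in the domination. For $\alpha\in(0,\delta)$ we have $0\leq g(\alpha)\leq E[V(0)S_{\alpha}]$ and $V(0)S_{\alpha}\leq V^{2}(0)/(\gamma\lambda)$, hence for every $M>0$,
\[
0\leq E[V(0)S_{\alpha}]\leq M\,ES_{\alpha}+\frac{1}{\gamma\lambda}E\!\left[V^{2}(0)\,1_{\{V(0)>M\}}\right]=M\alpha+\frac{1}{\gamma\lambda}E\!\left[V^{2}(0)\,1_{\{V(0)>M\}}\right].
\]
Letting first $\alpha\downarrow 0$ and then $M\to\infty$ (using $EV^{2}(0)<\infty$) gives $\limsup_{\alpha\downarrow 0}E[V(0)S_{\alpha}]=0$, and combined with $g(\alpha)\geq 0$ this yields $\lim_{\alpha\downarrow 0}g(\alpha)=0=g(0)$. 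I expect the delicate point to be precisely the positivity of $x_{\alpha}$ near $0$ (equivalently, a uniform integrable domination of the family $\{S_{\alpha}\}$): the concavity of $g$ from Lemma~\ref{lemma: phase II convex} only yields the easy inequality $\liminf_{\alpha\downarrow 0}g(\alpha)\geq g(0)$ --- via $g(\alpha)\geq(\alpha/\beta)g(\beta)$ for a fixed $\beta\in(0,\lambda^{-1})$ --- and therefore cannot by itself control $\limsup_{\alpha\downarrow 0}g(\alpha)$ from above.
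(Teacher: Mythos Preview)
Your argument is correct, but it follows a genuinely different route from the paper's.

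For the \emph{lower bound}, the paper exhibits an explicit feasible point $\hat S_\alpha=\frac{\alpha}{\alpha_0}\bigl(S^0\wedge\frac{1}{2\lambda}\bigr)$ (with $S^0=\inf\{s\ge 0;V(s)\le 0\}$) and shows by dominated convergence that $f(\hat S_\alpha)\to 0$. You instead first establish $x_\alpha>0$ for small $\alpha$ and then invoke Lemma~\ref{lemma: S bound}(1) to get $g(\alpha)\ge 0$ directly. For the \emph{upper bound}, the paper replaces the slope $\frac{\gamma\lambda}{1-\lambda\alpha}$ by the constant $\gamma\lambda$, compares $g(\alpha)$ with the value of the auxiliary problem $\max\{E\int_0^S[V(s)-\gamma\lambda s]\,ds:\ S\ge 0,\ ES=\alpha\}$, and then appeals to Proposition~1 of \cite{Jacobovic2020} to show that this value tends to $0$ as $\alpha\downarrow 0$. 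You instead use the domination $S_\alpha\le V(0)/(\gamma\lambda)$ from Lemma~\ref{lemma: S bound}(1) (again available once $x_\alpha\ge 0$) together with a truncation/uniform-integrability argument based on $EV^2(0)<\infty$.

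Your approach has the advantage of being self-contained within the paper (no appeal to \cite{Jacobovic2020}), and the auxiliary fact that $x_\alpha>0$ near $0$ is of independent interest. The paper's approach avoids the detour through positivity of $x_\alpha$ and is slightly shorter at the cost of the external reference. Your closing remark that concavity alone gives only $\liminf_{\alpha\downarrow 0}g(\alpha)\ge g(0)$ is accurate and explains why some additional work is genuinely needed for the $\limsup$.
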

\begin{proof}
	Denote
	\begin{equation}
	S^0\equiv\inf\left\{s\geq0;V(s)\leq0\right\}
	\end{equation}
	and $\alpha_0\equiv E\left(S^0\wedge\frac{1}{2\lambda}\right)$. Observe that positiveness of $S^0$ implies that $\alpha_0\in\left(0,\frac{1}{2\lambda}\right]$. Then, for every $\alpha\in[0,\alpha_0)$, define $\hat{S}_\alpha\equiv\frac{\alpha}{\alpha_0}\left(S^0\wedge\frac{1}{2\lambda}\right)$ which is square-integrable nonnegative random variable such that $E\hat{S}_\alpha=\alpha$. Thus, by the definition of $g(\cdot)$ and using \eqref{eq: f square-integrable} deduce that  
	\begin{equation}\label{eq: f(alpha) lower bound}
	g(\alpha)\geq E\int_0^{\hat{S}_\alpha} V(s)ds-\frac{ \alpha^2 \gamma\lambda }{2\alpha_0^2\left(1- \lambda\alpha\right)}E\left(S^0\wedge\frac{1}{2\lambda}\right)^2\,.
	\end{equation}
	In particular, the expectation in the second term is finite and hence this term tends to zero as $\alpha\downarrow0$. In addition, for every $\alpha\in\left[0,\alpha_0\right)$,
	
	\begin{align*}
	0\leq \int_0^{\hat{S}_\alpha} V(s)ds\leq \int_0^{S^0\wedge\frac{1}{2\lambda}}V(0)ds\leq \frac{V(0)}{2\lambda}
	\end{align*}
	Thus, since $EV(0)<\infty$, dominated convergence implies that the first term in \eqref{eq: f(alpha) lower bound} tends to zero as $\alpha\downarrow0$. To provide an upper bound which tends to zero, note that for every $\alpha\in\left[0,\lambda^{-1}\right)$
	\begin{align}
	g(\alpha)&=E\int_0^{S_\alpha}\left[V(s)-s\frac{\gamma\lambda}{1-\lambda \alpha}\right]ds\\&\leq E\int_0^{S_\alpha}\left[V(s)-s\gamma\lambda\right]ds\nonumber\\&\leq E\int_0^{\tilde{S}_\alpha}\left[V(s)-s\gamma\lambda\right]ds\nonumber
	\end{align}
	where $\tilde{S}_\alpha$ is the solution of 
	\begin{equation}
	\begin{aligned}
	& \max_{S\in\mathcal{F}}:
	& &E\int_0^S\left[ V(s)-s\gamma\lambda\right]ds \\
	& \ \text{s.t:}
	& & 0\leq S \ , \ P \text{-a.s.}\,, \\ & & & ES=\alpha
	\end{aligned}
	\end{equation}
	which is specified by Theorem 1 of \cite{Jacobovic2020}. In particular, notice that this optimization is well-defined due to \eqref{eq: regularity condition 0}. In addition, note that existence of this solution is justified by the same kind of argument which was provided in order to justify that $S_\alpha$ is a solution of \eqref{optimization: phase I}. Now, let 
	\begin{equation}\label{eq: S tilde}
	\tilde{S}\equiv\inf\left\{s\geq0;\ V(s)-s\gamma\lambda\leq0\right\}
	\end{equation}
	and notice that  
	\begin{equation}
	E\int_0^{\tilde{S}}\left[\ V(s)-s\gamma\lambda\right]^-ds=0<\infty\,.
	\end{equation}
	Therefore, the pre-conditions of Proposition 1 of \cite{Jacobovic2020} are satisfied, \textit{i.e.}, 
	\begin{equation}
	\exists\lim_{\alpha\downarrow0}E\int_0^{\tilde{S}_\alpha}\left[V(s)-s\gamma\lambda\right]ds=0
	\end{equation}
	and the proof is completed.
\end{proof}
\begin{lemma}\label{lemma: alpha bound}
	Let $\alpha'\equiv\inf\{\alpha\in[0,\lambda^{-1});x_\alpha<0\}$. Then, $\alpha'<\lambda^{-1}$.
\end{lemma}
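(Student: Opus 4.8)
The plan is to exhibit an explicit constant $c$ with $0<c<\lambda^{-1}$ such that $x_\alpha<0$ for every $\alpha\in\left(c,\lambda^{-1}\right)$; since $\alpha'$ is by definition the infimum of the set of $\alpha$'s for which $x_\alpha<0$, this immediately gives $\alpha'\leq c<\lambda^{-1}$. The mechanism is that a nonnegative threshold $x_\alpha\geq 0$ forces $S_\alpha$ to be small: the obstacle $s\mapsto V(s)-\frac{\gamma\lambda s}{1-\lambda\alpha}$ starts at $V(0)$ and is driven below any nonnegative level no later than time $\frac{(1-\lambda\alpha)V(0)}{\gamma\lambda}$, so the constraint $ES_\alpha=\alpha$ cannot hold once $\alpha$ is close enough to $\lambda^{-1}$.

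Concretely, fix $\alpha\in(0,\lambda^{-1})$ with $x_\alpha\geq 0$. Then $S_\alpha=S_\alpha(x_\alpha)\leq S_\alpha(0)$, and since $V(\cdot)$ is nonincreasing, $V(s)\leq V(0)$, so
\[
S_\alpha(0)=\inf\left\{s\geq 0;\,V(s)\leq\frac{\gamma\lambda s}{1-\lambda\alpha}\right\}\leq\inf\left\{s\geq 0;\,V(0)\leq\frac{\gamma\lambda s}{1-\lambda\alpha}\right\}=\frac{(1-\lambda\alpha)V(0)}{\gamma\lambda}\,.
\]
Taking expectations and using that $S_\alpha$ is feasible for \eqref{optimization: phase I}, i.e. $ES_\alpha=\alpha$, yields $\alpha\leq\frac{(1-\lambda\alpha)EV(0)}{\gamma\lambda}$, which rearranges into $\alpha\left(\gamma+EV(0)\right)\leq\lambda^{-1}EV(0)$, hence $\alpha\leq c$ with $c\equiv\frac{EV(0)}{\lambda\left(\gamma+EV(0)\right)}$. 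This is a mild sharpening of Lemma \ref{lemma: S bound}(1): instead of bounding the slope below by $\gamma\lambda$ one keeps the true slope $\frac{\gamma\lambda}{1-\lambda\alpha}$.

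Finally, since $\gamma>0$ and $EV(0)=EX(0)\in(0,\infty)$, we have $\frac{EV(0)}{\gamma+EV(0)}<1$, so $0<c<\lambda^{-1}$. The contrapositive of the displayed estimate states that $x_\alpha<0$ for every $\alpha\in\left(c,\lambda^{-1}\right)$, and this interval is nonempty; therefore $\alpha'\leq c<\lambda^{-1}$. I do not anticipate a substantive obstacle here: the only point needing care is the existence of the threshold $x_\alpha$ and the identity $ES_\alpha(x_\alpha)=\alpha$, and both are already supplied in Phase I via Theorem 1 of \cite{Jacobovic2020} under $EV^2(0)<\infty$. Moreover the inequality $S_\alpha\leq S_\alpha(0)$ used above only requires $x_\alpha\geq 0$, so the estimate is unaffected by how ties at the boundary are resolved.
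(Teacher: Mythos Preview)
Your argument is correct and is in fact more direct than the paper's. The paper proceeds by contradiction: assuming $x_\alpha\geq0$ for every $\alpha\in[0,\lambda^{-1})$, it uses the representation $g(\alpha)=E\int_0^{S_\alpha}V(s)\,ds-\frac{\gamma\lambda ES_\alpha^2}{2(1-\lambda\alpha)}$ together with the bound $E\int_0^{S_\alpha}V(s)\,ds\leq\frac{EV^2(0)}{\gamma\lambda}$ to get $g(\alpha)\leq\frac{EV^2(0)}{\gamma\lambda}-\frac{\gamma\lambda ES_\alpha^2}{2(1-\lambda\alpha)}$, and then splits into two cases: if $\liminf_{\alpha\uparrow\lambda^{-1}}ES_\alpha^2>0$ the right side tends to $-\infty$, contradicting $g(\alpha)\geq0$; if $\liminf_{\alpha\uparrow\lambda^{-1}}ES_\alpha^2=0$, Cauchy--Schwarz gives $\alpha^2=(ES_\alpha)^2\leq ES_\alpha^2\to0$, contradicting $\alpha\to\lambda^{-1}$. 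Your route instead sharpens the pointwise bound of Lemma~\ref{lemma: S bound}(1) by keeping the true slope $\frac{\gamma\lambda}{1-\lambda\alpha}$ rather than $\gamma\lambda$, and then reads off an explicit threshold $c=\frac{EV(0)}{\lambda(\gamma+EV(0))}<\lambda^{-1}$ from the constraint $ES_\alpha=\alpha$. This avoids the case split, the Cauchy--Schwarz step, and any appeal to $g$ or to $EV^2(0)$; it uses only $EV(0)<\infty$. It also yields quantitative information ($\alpha'\leq c$) that the paper's proof does not.
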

\begin{proof}
	Assume by contradiction that $\alpha'=\lambda^{-1}$ which means that $x_\alpha\geq0$ for every $\alpha\in\left[0,\lambda^{-1}\right)$. In addition, observe that Lemma \ref{lemma: square-integrable} and \eqref{eq: f square-integrable} imply that for every $\alpha\in\left[0,\lambda^ {-1}\right)$
	\begin{equation*}
	g\left(\alpha\right)=E\int_0^{S_{\alpha}}V(s)ds-\frac{\gamma ES_{\alpha}^2}{2\left(1-\lambda\alpha\right)}\,.
	\end{equation*} 
	In addition,   $x_{\alpha}\geq0,\forall \alpha\in\left[0,\lambda^{-1}\right)$ and hence Lemma \ref{lemma: S bound} implies that $0\leq S_{\alpha}\leq\frac{V(0)}{\gamma\lambda},\forall \alpha\in\left(\alpha_0,\lambda^{-1}\right)$. Therefore, since $V(\cdot)$ is nonincreasing, deduce that for every $\alpha\in\left[0,\lambda^{-1}\right)$
	\begin{equation}
	E\int_0^{S_{\alpha}}V(s)ds\leq EV(0)S_{\alpha}\leq\frac{EV^2(0)}{\gamma\lambda}\,.
	\end{equation}
	These results imply that for every $\alpha\in\left[0,\lambda^{-1}\right)$
	
	\begin{equation}\label{eq: upper bound}
	g\left(\alpha\right)\leq\frac{EV^2(0)}{\gamma\lambda}-\frac{\gamma ES^2_{\alpha}}{2\left(1-\lambda\alpha\right)}\,.
	\end{equation}
	Now, if 
	\begin{equation}
	\lim_{\alpha\uparrow\lambda^{-1}}\inf ES_\alpha^2>0\,,
	\end{equation}
	then \eqref{eq: upper bound} implies that $g(\alpha)$ tends to $-\infty$ as $\alpha\uparrow\lambda^{-1}$. Thus, in such a case there exists $\alpha\in(0,\lambda^{-1})$ such that $g(\alpha)<0$. On the other hand, recall that $x_{\alpha}\geq0$ and 
	\begin{equation}
	S_{\alpha}=\inf\left\{s\geq0;V(s)-\frac{\lambda\gamma s}{1-\lambda\alpha}\leq x_{\alpha}\right\}\,.
	\end{equation}
	Thus, since $V(\cdot)$ is nonincreasing right-continuous process, then this implies that
	\begin{equation}
	g(\alpha)=f(S_{\alpha})=E\int_0^{S_{\alpha}}\left[V(s)-\frac{\gamma\lambda s}{1-\lambda\alpha}\right]ds\geq0
	\end{equation}
	which implies a contradiction.
	
	Hence, deduce that
	\begin{equation}
	\lim_{\alpha\uparrow\lambda^{-1}}\inf ES_\alpha^2=0\,.
	\end{equation}
	Then, since for every $\alpha\in[0,\lambda^{-1})$, $S_\alpha$ is squared-integrable, then the Cauchy-Schwartz inequality leads to the following contradiction
	\begin{align}
	0<\lambda^{-2}&=\lim_{\alpha\uparrow\lambda^{-1}}\inf\alpha^2\\&=\lim_{\alpha\uparrow\lambda^{-1}}\inf \left(ES_\alpha\right)^2\nonumber\\&\leq \lim_{\alpha\uparrow\lambda^{-1}}\inf ES_\alpha^2=0\,.
	\end{align}
	 
\end{proof}
\begin{lemma}\label{lemma: existence1}
	There exists $\alpha^*\in\left[0,\frac{EV(0)}{\gamma\lambda}\right]\cap\left[0,\lambda^{-1}\right)$ which is a maximizer of $g(\cdot)$ on $[0,\lambda^{-1})$ such that $x_{\alpha^*}\geq0$ and $S^*\equiv S_{\alpha^*}$ is an optimal solution of \eqref{optimization: social1} which is square-integrable.
\end{lemma}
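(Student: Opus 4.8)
The strategy is to obtain $\alpha^*$ as a maximizer of $g(\cdot)$ over a \emph{compact} subinterval of $[0,\lambda^{-1})$ and then read off the remaining assertions from the lemmata already proved. First I would record that $g(\cdot)$ is real-valued and continuous on $[0,\lambda^{-1})$: it is bounded above by $EV^2(0)/(\gamma\lambda)$ by Lemma~\ref{lemma: S bound}, it is finite from below because $S_\alpha$ is square-integrable (Lemma~\ref{lemma: square-integrable}) so that \eqref{eq: f square-integrable} applies to $g(\alpha)=f(S_\alpha)$, it is continuous on $(0,\lambda^{-1})$ by concavity (Lemma~\ref{lemma: phase II convex}), and it is continuous at $0$ by Lemma~\ref{lemma: phase II continuous}.

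Next, with $\alpha'=\inf\{\alpha\in[0,\lambda^{-1});\,x_\alpha<0\}$ as in Lemma~\ref{lemma: alpha bound} we have $\alpha'<\lambda^{-1}$, so $[0,\alpha']$ is a compact subinterval of $[0,\lambda^{-1})$ and $g$ attains its maximum over $[0,\alpha']$ at some $\beta\in[0,\alpha']$. I claim $\beta$ maximizes $g$ over all of $[0,\lambda^{-1})$. Indeed, for $\alpha\in(\alpha',\lambda^{-1})$ the monotonicity of $\alpha\mapsto x_\alpha$ together with the definition of $\alpha'$ gives $x_\alpha<0$, whence Lemma~\ref{lemma: nonnegative x} furnishes $\tilde\alpha$ with $x_{\tilde\alpha}\ge0$ and $g(\alpha)\le g(\tilde\alpha)$; since $x_{\tilde\alpha}\ge0$ forces $\tilde\alpha\le\alpha'$ (again by monotonicity of $\alpha\mapsto x_\alpha$), it follows that $g(\alpha)\le g(\tilde\alpha)\le g(\beta)$.

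Now I would adjust $\beta$ so that its threshold is nonnegative. If $x_\beta\ge0$, take $\alpha^*=\beta$. Otherwise, since $x_\mu\ge0$ for every $\mu<\alpha'$, we must have $\beta=\alpha'$; applying Lemma~\ref{lemma: nonnegative x} to $\alpha'$ yields $\alpha^*$ with $x_{\alpha^*}\ge0$ and $g(\alpha^*)\ge g(\alpha')=\max_{[0,\lambda^{-1})}g$, and moreover $\alpha^*\le ES_{\alpha'}=\alpha'$ by the construction in that lemma. In either case $\alpha^*$ is a global maximizer of $g$, $x_{\alpha^*}\ge0$, and $\alpha^*\le\alpha'<\lambda^{-1}$. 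From $x_{\alpha^*}\ge0$ and Lemma~\ref{lemma: S bound} we get $0\le S^*\equiv S_{\alpha^*}\le V(0)/(\gamma\lambda)$, hence $\alpha^*=ES^*\le EV(0)/(\gamma\lambda)$; thus $\alpha^*\in[0,EV(0)/(\gamma\lambda)]\cap[0,\lambda^{-1})$ and $S^*$ is square-integrable (also by Lemma~\ref{lemma: square-integrable}).

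It remains to check that $S^*$ solves \eqref{optimization: social1}. Since $S^*\ge0$ and $ES^*=\alpha^*<\lambda^{-1}$, we have $S^*\in\mathcal{S}$ (in fact $S^*\in\mathcal{D}$). For an arbitrary $S\in\mathcal{S}$ set $\alpha=ES\in[0,\lambda^{-1})$; then $f(S)=E\int_0^S\bigl[V(s)-\tfrac{\gamma\lambda s}{1-\lambda\alpha}\bigr]\,ds$ is exactly the objective of Phase~I \eqref{optimization: phase I} with parameter $\alpha$ evaluated at the feasible point $S$, so $f(S)\le E\int_0^{S_\alpha}\bigl[V(s)-\tfrac{\gamma\lambda s}{1-\lambda\alpha}\bigr]\,ds=g(\alpha)\le g(\alpha^*)=f(S^*)$, which is the required optimality. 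I expect the crux to be the middle two paragraphs: showing the global supremum of $g$ is attained and can be realized at a point with nonnegative threshold, where the monotonicity of $\alpha\mapsto x_\alpha$, the a priori bound of Lemma~\ref{lemma: alpha bound}, continuity of $g$, and the shifting device of Lemma~\ref{lemma: nonnegative x} must be combined, with some care at the boundary point $\alpha'$.
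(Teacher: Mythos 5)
Your proposal is correct and follows essentially the same route as the paper: reduce the maximization of $g$ to the compact interval $[0,\alpha']$ via Lemma~\ref{lemma: alpha bound} and Lemma~\ref{lemma: nonnegative x}, invoke concavity (Lemma~\ref{lemma: phase II convex}) plus right-continuity at $0$ (Lemma~\ref{lemma: phase II continuous}) to extract a maximizer, and then read off square-integrability, the bound $\alpha^*\le EV(0)/(\gamma\lambda)$, and the nonnegativity of $x_{\alpha^*}$ from Lemma~\ref{lemma: square-integrable}, Lemma~\ref{lemma: S bound} and Lemma~\ref{lemma: nonnegative x}. The only difference is that you spell out explicitly the boundary case $\beta=\alpha'$ with $x_{\alpha'}<0$ and the two-phase inequality $f(S)\le g(ES)\le g(\alpha^*)$, details the paper leaves implicit.
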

\begin{proof}
	By Lemma \ref{lemma: alpha bound}, $\alpha'<\lambda^{-1}$ and hence Lemma \ref{lemma: nonnegative x} implies that Phase II is reduced to maximization of $g(\cdot)$ on the closed interval $\left[0,\alpha'\right]$. By Lemma \ref{lemma: phase II convex} and Lemma \ref{lemma: phase II continuous} deduce that $g(\cdot)$ is continuous on $[0,\alpha']$ and hence there exists $\alpha^*\in[0,\alpha']$ which maximizes the value of $g(\cdot)$ over $[0,\lambda^{-1})$. In addition, given the maximizer $\alpha^*$, square integrability of $S^*$ is a direct consequence of Lemma \ref{lemma: square-integrable}. Finally, the upper bound and the fact that $x_{\alpha^*}\geq0$ stems immediately from Lemma \ref{lemma: nonnegative x} and Lemma \ref{lemma: S bound}.	
\end{proof}

\begin{lemma}\label{lemma: positiveness}
	$f(S^*)=0$ if and only if $\alpha^*=0$.
\end{lemma}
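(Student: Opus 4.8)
The plan is to prove the two implications separately. The forward implication is immediate: since $S^{*}=S_{\alpha^{*}}$ and $g(\alpha)\equiv f(S_{\alpha})$ by the definition of $g$ in Phase II, we have $f(S^{*})=g(\alpha^{*})$, and if $\alpha^{*}=0$ then $f(S^{*})=g(0)=0$ by Lemma~\ref{lemma: phase II continuous}.

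For the reverse implication I would prove the (slightly stronger) contrapositive that $g$ is strictly positive somewhere on $(0,\lambda^{-1})$: once we exhibit $\alpha_{1}\in(0,\lambda^{-1})$ with $g(\alpha_{1})>0$, then because $\alpha^{*}$ maximizes $g$ over $[0,\lambda^{-1})$ (Lemma~\ref{lemma: existence1}) and $g(0)=0$ (Lemma~\ref{lemma: phase II continuous}), we get $f(S^{*})=g(\alpha^{*})\ge g(\alpha_{1})>0=g(0)$, hence $f(S^{*})\neq0$ and $\alpha^{*}\neq0$ simultaneously, which is exactly the biconditional. To produce such an $\alpha_{1}$ I would reuse the competitor already built in the proof of Lemma~\ref{lemma: phase II continuous}: set $S^{0}\equiv\inf\{s\ge0;\,V(s)\le0\}$ (a.s.\ strictly positive since $V$ is right-continuous and $V(0)=X(0)>0$), $W\equiv S^{0}\wedge\frac{1}{2\lambda}$, $\alpha_{0}\equiv EW\in(0,\tfrac{1}{2\lambda}]$, and $\hat S_{\alpha}\equiv\frac{\alpha}{\alpha_{0}}W$ for $\alpha\in(0,\alpha_{0})$, so that $\hat S_{\alpha}\in\mathcal{D}$ with $E\hat S_{\alpha}=\alpha$ and, by~\eqref{eq: f(alpha) lower bound}, $g(\alpha)\ge E\int_{0}^{\hat S_{\alpha}}V(s)\,ds-\frac{\alpha^{2}\gamma\lambda}{2\alpha_{0}^{2}(1-\lambda\alpha)}EW^{2}$.

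The crux is a lower bound on $E\int_{0}^{\hat S_{\alpha}}V(s)\,ds$ that is \emph{linear} in $\alpha$ near $0$. For $\alpha<\alpha_{0}/2$ the contraction factor $\alpha/\alpha_{0}<1/2$ forces $\hat S_{\alpha}\le W/2<S^{0}$, so the whole random interval $[0,\hat S_{\alpha}]$ stays strictly below the first zero of $V$; since $V$ is nonincreasing this gives $V(s)\ge V(W/2)>0$ on $[0,\hat S_{\alpha}]$, whence $\int_{0}^{\hat S_{\alpha}}V(s)\,ds\ge\frac{\alpha}{\alpha_{0}}\,W\,V(W/2)$ and therefore $E\int_{0}^{\hat S_{\alpha}}V(s)\,ds\ge K\alpha$ with $K\equiv\frac{1}{\alpha_{0}}E[WV(W/2)]$; here $K<\infty$ because $WV(W/2)\le\frac{1}{2\lambda}V(0)\in L_{1}$, and $K>0$ because $WV(W/2)>0$ a.s. Feeding this into~\eqref{eq: f(alpha) lower bound} together with the crude estimates $EW^{2}\le\frac{1}{4\lambda^{2}}$ and $1-\lambda\alpha\ge\frac12$ for $\alpha\le\frac{1}{2\lambda}$ yields $g(\alpha)\ge K\alpha-C\alpha^{2}$ on $(0,\alpha_{0}/2)$ with $C\equiv\frac{\gamma}{4\lambda\alpha_{0}^{2}}<\infty$, so any $\alpha_{1}\in(0,\min\{\alpha_{0}/2,\,K/C\})$ satisfies $g(\alpha_{1})>0$, completing the proof. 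The main obstacle is precisely this linear lower bound: one has to keep $\hat S_{\alpha}$ uniformly below $S^{0}$ so that $V$ along $[0,\hat S_{\alpha}]$ is dominated from below by a single a.s.\ positive, integrable random variable; without that cushion $E\int_{0}^{\hat S_{\alpha}}V(s)\,ds$ could a priori be $o(\alpha)$ and would be swamped by the $O(\alpha^{2})$ waiting-cost term, leaving the sign of $g$ near $0$ undetermined.
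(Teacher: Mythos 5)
Your proof is correct, but it takes a genuinely different route from the paper's. For the reverse implication the paper argues locally from the threshold form of the optimizer: since $V(s)-\frac{\gamma\lambda s}{1-\lambda\alpha^*}>x^*$ for all $s<S^*$ and $x^*=x_{\alpha^*}\geq0$ by Lemma \ref{lemma: existence1}, the hypothesis $f(S^*)=0$ forces $0\geq \alpha^*x^*$ and hence $\alpha^*=0$ --- a two-line argument that uses the sign of $x^*$. You instead prove the strictly stronger statement that $\sup_{\alpha}g(\alpha)>0$, which makes both sides of the biconditional false and hence the equivalence vacuously true. Your execution is sound: $S^0>0$ a.s.\ by right-continuity and $V(0)=X(0)>0$; for $\alpha\leq\alpha_0/2$ one indeed has $\hat S_\alpha\leq W/2<S^0$, so monotonicity of $V$ gives $V(s)\geq V(W/2)>0$ on $[0,\hat S_\alpha]$ and the linear bound $E\int_0^{\hat S_\alpha}V(s)\,ds\geq K\alpha$ with $0<K<\infty$ (integrability via $WV(W/2)\leq V(0)/(2\lambda)$); and the congestion term in \eqref{eq: f(alpha) lower bound} is $O(\alpha^2)$ with your constant $C=\gamma/(4\lambda\alpha_0^2)$, so $g(\alpha)>0$ for small positive $\alpha$. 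Notably, your argument does not need $x^*\geq0$ nor the paper's subsequent reduction to nonnegative $V$, and it simultaneously establishes the content of the paper's \emph{next} lemma, $f(S^*)>0$, which the paper proves separately via the right-derivative computation $\partial_+\upsilon(0)=EV(0)>0$ using Lemma \ref{lemma: leibnitz} on deterministic competitors. What you buy is a more elementary, quantitative $K\alpha-C\alpha^2$ estimate that bypasses the differentiation lemma; what you give up is the brevity of the paper's direct proof of the equivalence itself.
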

\begin{proof}
	Assume that $0=\alpha^*=ES^*$. Since $S^*$ is a nonnegative random variable, then $S^*=0$, $P$-a.s. and  $f(S^*)=0$ follows immediately. To show the other direction assume that 
	\begin{equation}
	0=f(S^*)=E\int_0^{S^*}\left[V(s)-\frac{\gamma\lambda s}{1-\lambda\alpha^*}\right]ds
	\end{equation}
	and recall that
	\begin{equation}
	S^*=\inf\left\{s\geq0;V(s)-\frac{\gamma\lambda s}{1-\lambda\alpha^*}\leq x^*\right\}\,.
	\end{equation}
	Therefore, since $V(\cdot)$ is nonincreasing right-continuous process, then
	\begin{equation*}
	0\geq E\int_0^{S^*}x^*=\alpha^*x^*\,.
	\end{equation*}
	and hence $x^*\geq0$ implies that $\alpha^*=0$ (remember that $\alpha^*\in[0,\lambda^{-1})$).
\end{proof}
\begin{equation*}
\end{equation*}

Observe that Lemma \ref{lemma: existence1} implies that there exists $\alpha^*\in\left[0,\lambda^{-1}\right)$ and $x^*= x_{\alpha^*}\geq0$ for which $S^*=S_{\alpha^*}=S_{\alpha^*}\left(x^*\right)$ is an optimum of \eqref{optimization: social1}. Therefore, since $V(\cdot)$ is nonincreasing and right-continuous, then its left limit at $S^*$ is nonnegative and hence $S^*$ is also an optimum (with the same objective value) of the analogue optimization with $V^+(\cdot)$ replacing $V(\cdot)$. Thus, without loss of generality, from now on assume that $V(\cdot)$ is nonnegative. 

\begin{lemma}
	$f(S^*)>0$ (and hence $\alpha^*>0$).
\end{lemma}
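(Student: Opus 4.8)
The plan is to produce some $\alpha\in(0,\lambda^{-1})$ for which $g(\alpha)>0$. Since $S^*=S_{\alpha^*}$ and $g(\alpha)\equiv f(S_\alpha)$ with $\alpha^*$ a maximizer of $g(\cdot)$ on $[0,\lambda^{-1})$ (Lemma \ref{lemma: existence1}), one has $f(S^*)=g(\alpha^*)\ge g(\alpha)$, so exhibiting such an $\alpha$ immediately gives $f(S^*)>0$; the parenthetical claim $\alpha^*>0$ then follows at once from Lemma \ref{lemma: positiveness}, because $\alpha^*\ge0$ and $f(S^*)\neq 0$.

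The first step is to choose a level $\epsilon_0>0$ and put $A\equiv\{V(\epsilon_0)>\epsilon_0\}$ with $p\equiv P(A)>0$. This is possible because $V(0)=X(0)$ is $P$-a.s.\ positive and $V(\cdot)$ is right-continuous, so $1_{\{V(\epsilon)>\epsilon\}}\to1$ $P$-a.s.\ as $\epsilon\downarrow0$; bounded convergence gives $P(V(\epsilon)>\epsilon)\to1$, and any small enough $\epsilon_0$ (e.g.\ with $P(V(\epsilon_0)>\epsilon_0)>\tfrac12$) works. Then, for $\alpha\in\bigl(0,(p\epsilon_0)\wedge\lambda^{-1}\bigr)$, consider the two-valued random variable $S\equiv\frac{\alpha}{p}1_A$. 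Then $S\ge0$, $ES=\alpha$ and $ES^2=\alpha^2/p<\infty$, so $S\in\mathcal{D}$ with $ES=\alpha$, whence \eqref{eq: f(alpha) definition} gives $g(\alpha)\ge f(S)$. On $A$, monotonicity of $V(\cdot)$ yields $V(s)\ge V(\epsilon_0)>\epsilon_0$ for all $s\in[0,\epsilon_0]\supseteq[0,\alpha/p]$, so
\begin{equation*}
E\int_0^S V(s)\,ds=E\Bigl[1_A\int_0^{\alpha/p}V(s)\,ds\Bigr]\ge\frac{\alpha\epsilon_0}{p}\,P(A)=\alpha\epsilon_0\,,
\end{equation*}
and therefore, by \eqref{eq: f square-integrable},
\begin{equation*}
g(\alpha)\ \ge\ f(S)\ \ge\ \alpha\epsilon_0-\frac{\gamma\lambda\alpha^2}{2p\,(1-\lambda\alpha)}\ =\ \alpha\Bigl(\epsilon_0-\frac{\gamma\lambda\alpha}{2p\,(1-\lambda\alpha)}\Bigr)\,.
\end{equation*}
The bracketed term tends to $\epsilon_0>0$ as $\alpha\downarrow0$, so the right-hand side is strictly positive for all sufficiently small $\alpha>0$; fixing any such $\alpha$ finishes the proof.

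The only genuinely delicate point is the first step — converting the pointwise positivity of $V(0)$ into a positive-probability event $A$ on which $V$ is bounded below by a fixed constant over a fixed interval; once this is in hand, the conclusion is the elementary observation that the reward $E\int_0^S V\,ds$ can be made linear in $\alpha$ while the congestion penalty is quadratic in $\alpha$. An alternative that avoids choosing $\epsilon_0$ is to reuse the random variable $\hat S_\alpha$ from the proof of Lemma \ref{lemma: phase II continuous}: dominated convergence gives $\alpha^{-1}E\int_0^{\hat S_\alpha}V(s)\,ds\to\alpha_0^{-1}E\bigl[(S^0\wedge\tfrac{1}{2\lambda})\,V(0)\bigr]>0$ while the penalty term in \eqref{eq: f(alpha) lower bound} is $O(\alpha)$, which again forces $g(\alpha)>0$ for small $\alpha>0$.
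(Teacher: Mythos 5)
Your proof is correct, and it reaches the same conclusion by a noticeably different route than the paper. The paper first invokes the reduction (stated just before this lemma) that one may WLOG take $V(\cdot)$ nonnegative, then evaluates $f$ at the \emph{deterministic} times $S\equiv\alpha$ and applies Lemma \ref{lemma: leibnitz} to compute the right-derivative of $\alpha\mapsto f(\alpha)$ at $0$, which equals $EV(0)>0$; positivity of $f$ at some small deterministic $\alpha_0$ follows. You instead build the explicit two-valued time $S=\frac{\alpha}{p}1_A$ supported on an event $A=\{V(\epsilon_0)>\epsilon_0\}$ of positive probability, and get the quantitative bound $g(\alpha)\ge\alpha\bigl(\epsilon_0-\frac{\gamma\lambda\alpha}{2p(1-\lambda\alpha)}\bigr)$ directly, with no differentiation lemma and no need for the nonnegativity reduction (on $A^c$ the integral vanishes and on $A$ the integrand is bounded below by $\epsilon_0$ over the whole integration range). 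The underlying mechanism is identical in both arguments — the reward is linear in $\alpha$ near zero while the congestion penalty is quadratic — but your version is more elementary and self-contained, at the cost of the preliminary step producing $A$, which you justify correctly from $V(0)=X(0)>0$ a.s., right-continuity, and bounded convergence. Your chain $f(S^*)=g(\alpha^*)\ge g(\alpha)\ge f(S)>0$ and the deduction of $\alpha^*>0$ from Lemma \ref{lemma: positiveness} are both sound. One could also note that your closing remark about reusing $\hat S_\alpha$ from Lemma \ref{lemma: phase II continuous} is essentially a third valid variant of the same linear-versus-quadratic comparison.
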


\begin{proof}
	In order to prove that $f(S^*)>0$ it is enough to find a random variable $S_0\in\mathcal{S}$ for which $f(S_0)>0$. To this end, for every $\alpha\in[0,\infty)$ define a function 
	\begin{equation}
	\upsilon(\alpha)\equiv f(\alpha)=E\int_0^\alpha V(s)ds-\frac{\gamma\lambda \alpha^2}{2(1-\lambda\alpha)}\,.
	\end{equation}
	Since $V(\cdot)$ is nonnegative nonincreasing  right-continuous process, then Lemma \ref{lemma: leibnitz} implies that $\upsilon(\cdot)$ is right-differentiable with a right-derivative at zero which equals to
	\begin{equation}
	\partial_+\upsilon(\alpha=0)=EV(0)>0\,.
	\end{equation}
	This means that there exists $\alpha_0>0$ such that $\upsilon(\alpha_0)>\upsilon(0)=0$ and the result follows.  
\end{proof}

\begin{lemma}\label{lemma: step1}
	If $V(\cdot)$ is continuous, then \eqref{eq: x equation} holds.
\end{lemma}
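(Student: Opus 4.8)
The plan is to extract \eqref{eq: x equation} from a one-parameter scaling perturbation of $S^*$ combined with the first-order optimality of $\alpha^*$ for $g(\cdot)$. Concretely, I would set $\psi(v)\equiv f(vS^*)$ for $v$ in a neighbourhood of $1$. Since $\alpha^*\in(0,\lambda^{-1})$ we have $1<(\lambda\alpha^*)^{-1}$, so $v=1$ is interior to the set $\{v\ge 0:\ v\alpha^*<\lambda^{-1}\}$, on which $vS^*\in\mathcal{D}$ (using square-integrability of $S^*$ from Lemma \ref{lemma: square-integrable}) and $E[vS^*]=v\alpha^*$. The first step is to show $\psi$ has a local maximum at $v=1$: by \eqref{eq: f(alpha) definition} and \eqref{eq: f square-integrable},
\[
\psi(v)=f(vS^*)=H\big(vS^*,v\alpha^*\big)\le g(v\alpha^*)\le g(\alpha^*)=f(S^*)=\psi(1),
\]
where the second inequality is the optimality of $\alpha^*$ for $g(\cdot)$ established in Lemma \ref{lemma: existence1}.

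The second step is to differentiate $\psi$ at $v=1$. Writing $\psi(v)=E\int_0^{vS^*}V(s)\,ds-\dfrac{\gamma\lambda\,v^2 E(S^*)^2}{2(1-\lambda v\alpha^*)}$, continuity of $V(\cdot)$ makes $v\mapsto\int_0^{vS^*}V(s)\,ds$ differentiable with derivative $S^*V(vS^*)$ pointwise; after the preceding WLOG reduction $V\ge 0$, so $0\le S^*V(vS^*)\le S^*V(0)$, and $E[S^*V(0)]<\infty$ by Cauchy--Schwarz (square-integrability of $S^*$ and of $V(0)$), which justifies differentiating under the expectation. The second term is a smooth function of $v$ near $1$, so $\psi$ is $C^1$ there, and the interior maximum gives
\[
0=\psi'(1)=E\big[S^*V(S^*)\big]-\gamma\lambda\,E(S^*)^2\cdot\frac{2-\lambda\alpha^*}{2(1-\lambda\alpha^*)^2}.
\]

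The third step is to evaluate $E[S^*V(S^*)]$ using continuity again. The map $s\mapsto V(s)-\gamma\lambda s/(1-\lambda\alpha^*)$ is continuous, strictly decreasing, and tends to $-\infty$, so on $\{S^*>0\}$ it equals the level $x^*$ exactly at $s=S^*$, i.e. $V(S^*)=x^*+\gamma\lambda S^*/(1-\lambda\alpha^*)$; this identity also holds trivially on $\{S^*=0\}$. Multiplying by $S^*$ and taking expectations yields $E[S^*V(S^*)]=x^*\alpha^*+\gamma\lambda E(S^*)^2/(1-\lambda\alpha^*)$. Substituting into the first-order condition and simplifying $\dfrac{2-\lambda\alpha^*}{2(1-\lambda\alpha^*)^2}-\dfrac{1}{1-\lambda\alpha^*}=\dfrac{\lambda\alpha^*}{2(1-\lambda\alpha^*)^2}$ gives $x^*\alpha^*=\dfrac{\gamma\lambda^2\alpha^* E(S^*)^2}{2(1-\lambda\alpha^*)^2}$, and dividing by $\alpha^*>0$ (already established) produces \eqref{eq: x equation}. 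I expect the only delicate points to be the justification of differentiation under the expectation and of the exact hitting identity $V(S^*)=x^*+\gamma\lambda S^*/(1-\lambda\alpha^*)$, both of which rely precisely on the continuity hypothesis on $V$; the remaining manipulations are routine.
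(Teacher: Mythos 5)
Your proposal is correct and follows essentially the same route as the paper's own proof: the scaling perturbation $v\mapsto f(vS^*)$, the first-order condition at $v=1$ justified by differentiation under the expectation with the dominating bound $S^*V(0)$, and the exact hitting identity $V(S^*)=x^*+\gamma\lambda S^*/(1-\lambda\alpha^*)$ on $\{S^*>0\}$ followed by division by $\alpha^*>0$. The only (harmless) differences are that you make the optimality of $v=1$ explicit via the chain $\psi(v)\le g(v\alpha^*)\le g(\alpha^*)$ rather than citing the global optimality of $S^*$ directly, and your algebraic form of $\psi'(1)$ is an equivalent rearrangement of the paper's.
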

\begin{proof}
	Assume that $V(\cdot)$ is a continuous process and denote $x^*=x_{\alpha^*}$. Thus, since $V(\cdot)$ is a continuous process, once $S^*>0$, then  
	\begin{equation*}
	V\left(S^*\right)=\frac{\gamma\lambda}{1-\lambda\alpha^*}S^*+x^*\,.
	\end{equation*}
	Therefore, by multiplying both sides by $S^*$ and taking expectations  deduce that
	\begin{equation}\label{eq: sV(s)}
	ES^*V\left(S^*\right)=\frac{\gamma\lambda}{1-\lambda\alpha^*}E\left(S^*\right)^2+x^*ES^*=\frac{\gamma\lambda}{1-\lambda\alpha^*}E\left(S^*\right)^2+x^*\alpha^*\,.
	\end{equation}
	In addition, for every $u>0$ define a function $\upsilon(u)\equiv f\left(uS^*\right)$. It is known that $S^*$ is square-integrable and hence, for every $u>0$, $uS^*$ is also square-integrable. Thus, using \eqref{eq: f square-integrable} deduce that 
	\begin{equation}
	\upsilon(u)=E\int_0^{uS^*}V(s)ds-\frac{\gamma \lambda u^2E\left(S^*\right)^2}{2\left(1-\lambda u\alpha^*\right)}\,.
	\end{equation}
	Recall that $V(\cdot)$ is continuous on $\left[0,\infty\right)$ and hence the fundamental theorem of calculus implies that for every $u>0$ 
	\begin{equation}
	\frac{d}{du}\int_0^{uS^*}V(s)ds=S^*V(uS^*)\,.
	\end{equation}
	Observe that this derivative is nonnegative and dominated from above by $S^*V(0)$. Therefore, since $S^*$ is dominated by a linear function of $V(0)$ (see Lemma \ref{lemma: S bound}) and $EV^2(0)<\infty$, then Lemma \ref{lemma: leibnitz} allows replacing  the order of expectation and differentiation. Namely, $\upsilon(\cdot)$ is differentiable at some neighbourhood of $u=1$ with a derivative 
	\begin{equation}
	\frac{d\mu(u)}{du}\bigg|_{u=1}=ES^*V\left(S^*\right)-\gamma\left[\frac{\lambda E\left(S^*\right)^2}{\left(1-\lambda \alpha^*\right)}+\frac{\lambda^2\alpha^*E\left(S^*\right)^2}{2\left(1-\lambda \alpha^*\right)^2}\right]\,.
	\end{equation}
	It has already been shown that $u=1$ is a global maximum of $\upsilon(\cdot)$.
	Therefore, by applying this result with a first order condition at $u=1$ and an insertion of \eqref{eq: sV(s)} all together lead to a conclusion that
	\begin{equation}
	0=\frac{d\mu(u)}{du}\bigg|_{u=1}=\alpha^*\left[x^*-\gamma\frac{\lambda^2E\left(S^*\right)^2}{2\left(1-\lambda \alpha^*\right)^2}\right]\,.
	\end{equation}
	Note that $f\left(S^*\right)>0$ implies that $\alpha^*>0$ and hence the result follows.
\end{proof}
\newline\newline
The final step in the proof of Proposition \ref{lemma: existence} is to extend the result of Lemma \ref{lemma: step1} for a general $V(\cdot)$. 

\begin{lemma}\label{lemma: x identity}
	\begin{equation}
	x^*\equiv x_{\alpha^*}= \gamma\frac{\lambda^2E\left(S^*\right)^2}{2\left(1-\lambda E S^*\right)^2}\in(0,\infty)\,.
	\end{equation}
\end{lemma}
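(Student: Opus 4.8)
The plan is to reduce the general case to Lemma \ref{lemma: step1} by mollifying $V(\cdot)$. Recall that before Lemma \ref{lemma: step1} we have already arranged, without loss of generality, that $V(\cdot)$ is nonnegative, so the kernels $g_n(\cdot)$ of Lemma \ref{lemma: approximation} may be used: set $V_n(s)\equiv V\ast g_n(s)$ pathwise. By Lemma \ref{lemma: approximation}, every path of $V_n(\cdot)$ is continuous, nonnegative and nonincreasing with $0\le V_n(0)\le V(0)$, so $EV_n^2(0)\le EV^2(0)<\infty$, and $V_n(s)\uparrow V(s)$ for each $s$. Hence every hypothesis we imposed on $V(\cdot)$ is inherited by $V_n(\cdot)$, which is moreover \emph{continuous}, so Proposition \ref{lemma: existence} and in particular Lemma \ref{lemma: step1} apply to $V_n(\cdot)$ and produce $\alpha_n^*\in\left(0,\lambda^{-1}\right)$, a dual level $x_n^*\ge0$, and the associated optimiser $S_n^*$ (built from $V_n$) with $\alpha_n^*=ES_n^*$ and $x_n^*=\gamma\frac{\lambda^2E(S_n^*)^2}{2(1-\lambda\alpha_n^*)^2}$. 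It remains to pass this identity to the limit.

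For the limit I would proceed as follows. For a fixed square-integrable $S\ge0$, monotone convergence gives $E\int_0^S V_n(s)\,ds\uparrow E\int_0^S V(s)\,ds$; taking suprema over admissible $S$ at a fixed mean $\alpha$ shows that the Phase II objective $g^{(n)}(\cdot)$ associated with $V_n(\cdot)$ increases pointwise to $g(\cdot)$ on $[0,\lambda^{-1})$. Since all the $g^{(n)}(\cdot)$ and $g(\cdot)$ are concave (Lemma \ref{lemma: phase II convex}) and $g^{(n)}(0)=g(0)=0$ while $g$ attains its maximum in the interior with $g(\alpha^*)>0$, the convergence is uniform on compact subsets of $[0,\lambda^{-1})$, the maxima converge, and the maximisers cannot escape to $0$ or to $\lambda^{-1}$; consequently, along a subsequence, $\alpha_n^*$ converges to a maximiser of $g(\cdot)$, which — $\alpha^*$ being \emph{any} such maximiser in Lemma \ref{lemma: existence1} — we take to be $\alpha^*$. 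Then $x_n^*\to x_{\alpha^*}=x^*$ because $x_\alpha$ is the nonincreasing root of $ES_\alpha(x)=\alpha$, and from $g^{(n)}(\alpha_n^*)=E\int_0^{S_n^*}V_n(s)\,ds-\frac{\gamma\lambda E(S_n^*)^2}{2(1-\lambda\alpha_n^*)}\to g(\alpha^*)=E\int_0^{S^*}V(s)\,ds-\frac{\gamma\lambda E(S^*)^2}{2(1-\lambda\alpha^*)}$ together with $E\int_0^{S_n^*}V_n(s)\,ds\to E\int_0^{S^*}V(s)\,ds$ one extracts $E(S_n^*)^2\to E(S^*)^2$. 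Letting $n\to\infty$ in $x_n^*=\gamma\frac{\lambda^2E(S_n^*)^2}{2(1-\lambda\alpha_n^*)^2}$ then yields \eqref{eq: x equation}. Finally $E(S^*)^2<\infty$ by Lemma \ref{lemma: square-integrable}; $\alpha^*>0$ forces $P(S^*>0)>0$, hence $E(S^*)^2>0$; and $\alpha^*<\lambda^{-1}$ gives $1-\lambda\alpha^*>0$, so $x^*\in(0,\infty)$, which completes Proposition \ref{lemma: existence}.

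The main obstacle is precisely the limit passage in the second paragraph. Because $V_n\uparrow V$ is only pointwise and $V$ may genuinely be discontinuous (e.g.\ $V(s)=\kappa-J(s)$ for a subordinator $J$), the crossing times $S_n^*$ need not converge in any obvious way, and one must control their first two moments and the associated dual levels $x_n^*$ through the convergence; this is where the concavity and monotonicity structure of Phase II — the pointwise monotone convergence $g^{(n)}\uparrow g$, the monotonicity of $\alpha\mapsto x_\alpha$ and of $x\mapsto ES_\alpha(x)$, the right-continuity of $V$, and the uniform majorant $0\le S_n^*\le V(0)/(\gamma\lambda)$ furnished by Lemma \ref{lemma: S bound} — are all needed (the bound makes dominated convergence available and pins down the limits of $E\int_0^{S_n^*}V_n$ and of $E(S_n^*)^2$). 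Everything else is routine, and one could alternatively view the whole argument as justifying, via the mollification, the first-order condition $\partial_\alpha g(\alpha^*)=x_{\alpha^*}-\gamma\frac{\lambda^2E(S^*)^2}{2(1-\lambda\alpha^*)^2}=0$, which is exactly the content of \eqref{eq: x equation}.
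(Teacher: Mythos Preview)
Your mollification strategy matches the paper's, and the uniform majorant $S_n^*\le V(0)/(\gamma\lambda)$ and the monotone convergence $g^{(n)}\uparrow g$ are both correct and useful. The gap is in the second paragraph, precisely at the two points you yourself flag in the third.

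First, the step ``$x_n^*\to x_{\alpha^*}=x^*$ because $x_\alpha$ is the nonincreasing root of $ES_\alpha(x)=\alpha$'' conflates two different dual maps. The number $x_n^*$ is the root of $ES_\alpha^{(n)}(x)=\alpha_n^*$, where $S_\alpha^{(n)}(x)$ is the crossing time built from $V_n$, not from $V$. Knowing only $\alpha_n^*\to\alpha^*$ and that $\alpha\mapsto x_\alpha$ (for the \emph{limit} problem) is monotone tells you nothing about $x_{\alpha_n^*}^{(n)}$. Second, the claimed convergence $E\int_0^{S_n^*}V_n\to E\int_0^{S^*}V$ presupposes some convergence $S_n^*\to S^*$, which you never establish; dominated convergence needs a pointwise (or a.e.) limit to dominate, and the bound alone does not supply one. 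Without these two limits you cannot extract $E(S_n^*)^2\to E(S^*)^2$, so the passage to the limit in the identity $x_n^*=\gamma\lambda^2E(S_n^*)^2/[2(1-\lambda\alpha_n^*)^2]$ is unjustified.

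The paper closes exactly this gap, and it does so \emph{without} ever arguing that $\alpha_n^*\to\alpha^*$ or that $x_n^*\to x^*$ a priori. Instead it extracts a subsequence along which $ES_{n_k}\to\alpha$ and $ES_{n_k}^2\to\sigma$ (bounded sequences), defines $x=\gamma\lambda^2\sigma/[2(1-\lambda\alpha)^2]$, and then proves the \emph{pointwise} convergence
\[
S_{n_k}\;\longrightarrow\;S'\equiv\inf\Bigl\{s\ge0:\,V(s)-\tfrac{\gamma\lambda s}{1-\lambda\alpha}\le x\Bigr\}
\]
by writing $S_{n_k}=\zeta_{n_k}^{-1}(0)$ for the strictly increasing continuous processes $\zeta_{n_k}(s)=x_{n_k}+\frac{\gamma\lambda s}{1-\lambda\alpha_{n_k}}-V_{n_k}(s)$ and invoking a Parzen-type result on convergence of generalized inverses under pointwise convergence of monotone functions. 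Dominated convergence (using your majorant) then gives $\alpha=ES'$, $\sigma=E(S')^2$, and a sandwich $f_{n_k}(S^*)\le f_{n_k}(S_{n_k})\le f(S_{n_k})\le f(S^*)$ yields $f(S')=f(S^*)$. Since Proposition~\ref{lemma: existence} is an existence statement, one then simply \emph{takes} $(\alpha^*,x^*,S^*)=(\alpha,x,S')$; there is no need to match the previously fixed maximiser. Your concavity/argmax route could be made to work, but only after you supply this pointwise convergence of crossing times --- which is the missing idea.
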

\begin{proof}
		Consider $V(\cdot)$ which might have jumps and for every $n\geq1 $ define $V_n(s)\equiv V\ast g_n(s),\forall s\geq0$ such that $g_n$ is given by the statement of Lemma \ref{lemma: approximation}. Note that due to this lemma, for each $n\geq1 $, $V_n$ satisfies the assumptions of Lemma \ref{lemma: step1}. In addition, by Lemma \ref{lemma: approximation}, it is known that for every $s\geq0$, $V_n(s)\uparrow V(s)$ as $n\to\infty$.  In addition, for every $n\geq1 $ consider the optimization
	\begin{equation} \label{optimization: social n}
	\begin{aligned}
	& \max_{S\in\mathcal{F}}:
	& &f_n(S)\equiv E\int_0^S\left[V_n(s)-s\frac{\gamma\lambda}{1-\lambda ES}\right]ds \\
	& \ \text{s.t:}
	& & 0\leq S \ , \ P \text{-a.s.}\,, \\ & & & ES<\lambda^{-1}\,.
	\end{aligned}
	\end{equation}
	and denote its objective functional by $f_n(\cdot)$. Note that for each $n\geq1$ \eqref{optimization: social n} has a solution $S_n$ such that
	\begin{equation}
	S_n=\inf\left\{s\geq0;V_n(s)-\frac{\lambda\gamma}{1-\lambda\alpha_n}s\leq x_n\right\}
	\end{equation}
	for some $\alpha_n\in\left(0,\lambda^{-1}\right)$ and $x_n\geq0$.
	Clearly, the sequence $ES_1,ES_2,\ldots$ is bounded on $\left[0,\lambda^{-1}\right)$. In addition, recall that for every $n\geq1 $, $S_n$ is bounded by a linear function of $V_n(0)\in\left[0,V(0)\right]$ (see also Lemma \ref{lemma: S bound}). Therefore, since $EV^2(0)<\infty$, the sequence $ES_1^2,ES_2^2,\ldots$ is bounded. Consequently,  there exists $\left\{n_k \right\}_{k=1}^\infty\subseteq\left\{1,2,\ldots\right\}$ such that
	\begin{equation}\label{eq: limits}
	\exists\lim_{k\to\infty}ES_{n_k }\equiv\alpha\ \ , \ \ \exists \lim_{k\to\infty}ES_{n_k }^2\equiv\sigma\,.
	\end{equation}
	
	For every $k\geq1$, $V_{n_k }(\cdot)$ is a process which satisfies the assumptions of Lemma \ref{lemma: step1} and hence 
	\begin{equation}
	x_{n_k }= \gamma\frac{ \lambda^2ES_{n_k }^2}{2\left[1- \lambda ES_{n_k }\right]^2}\geq0\ \ ,\ \ \forall k\geq1\,.
	\end{equation}
	This means that
	\begin{equation}
	\exists\lim_{k\to\infty}x_{n_k}=\gamma\frac{\lambda^2\sigma}{2\left(1-\lambda\alpha\right)^2}\equiv x
	\end{equation}
	such that $x^*\geq0$. Moreover, by construction, for every $s\geq0$, $V_{n_k }(s)\uparrow V(s)$ as $k\to\infty$ and observe that $\alpha_{n_k}\to\alpha$ as $k\to\infty$. Therefore, if for every $k\geq1$ 
	
	\begin{equation}
	\zeta_k(s)\equiv x_{n_k }+\frac{\gamma\lambda}{1-\lambda\alpha_{n_k }}s- V_{n_k }(s)\ \ , \ \ \forall s\geq0
	\end{equation}
	and 
	\begin{equation}
	\zeta(s)\equiv x+\frac{\gamma\lambda}{1-\lambda\alpha}s- V(s)\ \ , \ \ \forall s\geq0\,,
	\end{equation}
	then for every $s\geq0$, $\zeta_k(s)\rightarrow \zeta(s)$ as $k\to\infty$. Now, for every $k\geq1$ define 
	
	\begin{equation}
	\zeta^{-1}_k(u)\equiv\inf\left\{s\geq0;\zeta_k(s)\geq u\right\}\ \ , \ \ \forall u\in\mathbb{R}
	\end{equation}
	and 
	\begin{equation}
	\zeta^{-1}(u)\equiv\inf\left\{s\geq0;\zeta(s)\geq u\right\}\ \ , \ \ \forall u\in\mathbb{R}\,.
	\end{equation}
	Furthermore, observe that  $\zeta(\cdot),\zeta_1(\cdot),\zeta_2(\cdot),\ldots$ are all (strictly) increasing continuous processes tending to infinity as $s\to\infty$. Therefore, it can be deduced that $\zeta^{-1}(\cdot),\zeta^{-1}_1(\cdot),\zeta^{-1}_2(\cdot),\ldots$ are finite-valued continuous processes (with a time index $u$). Now, using exactly the same arguments which appear in the proof of Theorem 2A in \cite{Parzen1980}, deduce that for every $u\in\mathbb{R}$, $\zeta^{-1}_k(u)\rightarrow \zeta^{-1}(u)$ as $k\to\infty$. Especially, this is true for $u=0$, \textit{i.e.}, for every sample-space realization  
	\begin{equation}
	\exists\lim_{l\to\infty} S_{n_k }=\inf\left\{s\geq0;V(s)-\frac{\lambda\gamma}{1-\lambda\alpha}s\leq x\right\}\equiv S'\,.
	\end{equation}
	
	It is left to show that $S'$ is a solution of \eqref{optimization: social1}.  To this end, notice that for every $k\geq1$, $S_{n_k }$ is nonnegative and bounded from above by a linear function of $V_n(0)\leq V(0)$. Therefore, since $V(0)$ is square-integrable, dominated convergence implies that
	\begin{equation}
	\alpha=ES'\ \ , \ \ \sigma=E\left(S'\right)^2<\infty\,.
	\end{equation}
	To prove optimality of $S'$, observe that for every $k\geq1$,
	\begin{equation}
	V_{n_k }(s)\leq V(s)\ \ ,\ \ \forall s\geq0
	\end{equation}  
	and hence
	\begin{equation}
	f_{n_k }(S)\leq f(S)\ \ ,\ \ \forall S\in\mathcal{S}\,.
	\end{equation} 
	Thus, the optimality of $S_{n_k }$ (for each $k\geq1$) implies that  
	
	\begin{equation}
	f_{n_k }\left(S^*\right)\leq f_{n_k }\left(S_{n_k }\right)\leq f\left(S_{n_k }\right)\leq f\left(S^*\right)\ \ , \ \ \forall k\geq1\,.
	\end{equation}
	Moreover, it has already been shown that $S^*$ is square-integrable and hence it is possible to use \eqref{eq: f square-integrable}. Thus, since for every $s\geq0$, $0\leq V_{n_k }(s)\uparrow V(s)$ as $k\to\infty$, then monotone convergence implies that 
	\begin{align}
	f_{n_k }\left(S^*\right)&=E\int_0^{S^*}V_{n_k }(s)ds-\frac{\gamma\lambda}{2\left(1-\lambda\alpha^*\right)}E\left(S^*\right)^2\\&\xrightarrow{k\rightarrow\infty}E\int_0^{S^*}V(s)ds-\frac{\gamma\lambda}{2\left(1-\lambda\alpha^*\right)}E\left(S^*\right)^2=f\left(S^*\right)\,.\nonumber
	\end{align}
	In addition it is known that for every $k\geq1$, $ES_{n_k }^2<\infty$. Thus, it is possible to use \eqref{eq: f square-integrable} once again with a squeezing theorem in order to deduce that
	\begin{align}\label{eq: squeezing thm}
	f\left(S^*\right)&=\lim_{l\to\infty} f\left(S_{n_k }\right)\\&=\lim_{k\to\infty} E\int_0^{S_{n_k }}V(s)-\frac{\gamma\lambda \lim_{k\to\infty} E\left(S_{n_k }\right)^2}{2\left(1-\lambda\lim_{k\to\infty} \alpha_{n_k }\right)}\nonumber\\&=E\int_0^{S'}V(s)ds-\frac{\gamma\lambda E\left(S'\right)^2}{2\left(1-\lambda ES'\right)}\nonumber\\&=f\left(S'\right)\nonumber\,.
	\end{align} 
	In particular, notice that for every $k\geq1$, $S_{n_k }$ is bounded from above by $\frac{V(0)}{\gamma\lambda}$. Hence, for every $k\geq1$, $\int_0^{S_{n\left(k_l\right)}}V(s)ds$ is bounded from above by $\frac{V^2(0)}{\gamma\lambda}$ which is an integrable random variable. Thus, dominated convergence theorem justifies the third equality of  \eqref{eq: squeezing thm}. 
\end{proof}

\subsection*{Proof of Theorem \ref{thm: balking}}
Observe that $S^{**}$ and $V_{t_{\max}}(0)$ are square-integrable and hence
\begin{align}
0<v(t)&\leq P_t\int_0^{S^{**}}V_{t_{\max}}(0)ds=P_tES^{**}V_{t_{\max}}(0)\xrightarrow{t\uparrow t_{\text{max}}}0\,.
\end{align}
Since $t\mapsto P_t$ is positive and continuous on $t\in\left[t_{\min},t_{\max}\right)$, then it is enough to show that $r(t)\equiv v(t)/P_t$ is continuous on $\left[t_{\text{min}},t_{\text{max}}\right)$. To this end, fix $t\in\left[t_{\text{min}},t_{\text{max}}\right)$ and let $\{t_n\}_{n=1}^\infty\subseteq \left[t_{\text{min}},t+\epsilon\right]$ be a sequence such that $t_n\rightarrow t$ as $n\to\infty$ where $\epsilon\in(0,t_{\max}-t)$ is an arbitrary constant. Thus, since $(t,s)\mapsto\tilde{V}(t,s)$ is nondecreasing in its first coordinate, by Lemma \ref{lemma: S bound} deduce that for every $n\geq1$
\begin{equation}
0\leq S_{t_n}\leq\frac{V_{t_n}(0)}{\gamma\lambda P_{t_n}}\leq\frac{V_{t_{\max}}(0)}{\gamma\lambda P_{t+\epsilon}}\,.
\end{equation}
Since the upper bound is square-integrable and uniform in $n$, deduce that 
\begin{equation}
\alpha_n\equiv ES_{t_n}\ \  , \ \ \sigma_n\equiv ES_{t_n}^2\ \ , \ \ \forall n\geq1
\end{equation}
are two bounded sequences. Hence, there exists a subsequence $\left\{n(k)\right\}_{k=1}^\infty\subseteq\mathbb{N}$ such that

\begin{equation}
\exists\lim_{k\to\infty}\alpha_{n(k)}\equiv\alpha\ \ , \ \ \exists\lim_{k\to\infty}\sigma_{n(k)}\equiv\sigma\,.
\end{equation} 
In addition, Proposition \ref{lemma: existence} implies that for every $k\geq1$
\begin{equation}
S_{t_{n(k)}}=\inf\left\{s\geq0;V_{t_{n(k)}}(s)-s\frac{\gamma\lambda P_{t_{n(k)}}}{1-\lambda \alpha_{n(k)}P_{t_{n(k)}}}\leq x_{n(k)}\right\}
\end{equation}
such that 

\begin{equation}
0<x_{n(k)}=\frac{\gamma\left(\lambda P_{t_{n(k)}}\right)^2\sigma_{n(k)}}{2\left(1-\lambda\alpha_{n(k)} P_{t_{n(k)}}\right)}\xrightarrow{k\to\infty}\frac{\gamma\left(\lambda P_t\right)^2\sigma}{2\left(1-\lambda\alpha P_t\right)}\equiv x\,.
\end{equation}
In particular, note that $x$ is nonnegative. 

Now, let $U=F(T)\sim U(0,1)$ which is independent from $\tilde{V}$. In addition, observe that for every $t\in\left[t_{\min},t_{\max}\right)$  and $u\in\mathbb{R}$ 
\begin{equation}
P\left(T>u|T>t\right)=\frac{1-F(u\vee t)}{1-F(t)}\,.
\end{equation}
Therefore, for every $p\in(0,1)$ and $t\in\left[t_{\min},t_{\max}\right)$, the $p$'th quantile of $T$ given $\{T>t\}$ equals to 
\begin{equation}
q_t(p)=F^{-1}\left[1-(1-p)\left[1-F(t)\right]\right]\,.
\end{equation}
Recalling that $F(\cdot)$ is continuous and increasing on $\left[t_{\min},t_{\max}\right)$, then $q_t(p)$ is also continuous and increasing in $t$. In addition, without loss of generality, assume that $\tau_t=q_t(U)$. Consequently, since $(t,s)\mapsto\tilde{V}(t,s)$ is continuous and nondecreasing in $t$, then $(t,s)\mapsto V_t(s)$ is also continuous and nondecreasing in $t$. Thus, by applying the same technique which appears in the proof of Lemma \ref{lemma: x identity}, deduce that 
\begin{equation}\label{eq: S' definition}
\exists\lim_{k\to\infty}S_{t_{n(k)}}=\inf\left\{s\geq0;V_{t}(s)-s\frac{\gamma\lambda P_t}{1-\lambda P_t}\leq x\right\}\equiv S'_t\,.
\end{equation} 
In addition, since $S_t\leq S^{**}$ for every $t\in\left[t_{\min},t_{\max}\right)$ and $S^{**}$ is square-integrable, then dominated convergence implies that
\begin{equation}
\alpha= ES'_t\ \ , \ \ \sigma=E\left(S'_t\right)^2\,.
\end{equation}
For every $\nu_1,\nu_2\in\left[t_{\min},t_{\max}\right)$ define 
\begin{equation}
w_{\nu_1}(\nu_2)\equiv E\int_0^{S_{\nu_2}}V_{\nu_1}(s)ds-\frac{\gamma\lambda P_{\nu_1} ES_{\nu_2}^2}{2\left(1-\lambda P_{\nu_1} ES_{\nu_2}\right)}
\end{equation}
which is nondecreasing in $\nu_1$ and such that $r(\nu_1)= w_{\nu_1}(\nu_1)$. In addition, notice that
\begin{equation}
w_{t_{n(k)}}(t)\leq w\left[t_{n(k)}\right]\leq w_{t+\epsilon}\left[t_{n(k)}\right]\ \ , \ \forall k\geq1\,.
\end{equation}
Moreover, since for every $k\geq1$
\begin{equation}
0\leq V_{t_{n(k)}}(s)1_{\left[0,S_t\right]}(s)\leq V_{t_{\max}}(0)1_{\left[0,S^{**}\right]}(s)\,,
\end{equation}
then dominated convergence implies that
\begin{align}
w_{t_{n(k)}}(t)&=E\int_0^{S_{t}}V_{t_{n(k)}}(s)ds-\frac{\gamma\lambda P_{t_{n(k)}} ES_{t}^2}{2\left[1-\lambda P_{t_{n(k)}} ES_{t}\right]}\\&\xrightarrow{k\to\infty}E\int_0^{S_{t}}V_{t}(s)ds-\frac{\gamma\lambda P_t ES_{t}^2}{2\left(1-\lambda P_t ES_{t}\right)}=r(t)\,.
\end{align}
Similarly, since for every $k\geq1$ 
\begin{equation}
0\leq \int_0^{S_{t_{n(k)}}}V_{t+\epsilon}(s)ds\leq S^{**}V_{t_{\max}}(0)\,,
\end{equation}
dominated convergence might be used once again in order to derive the limit

\begin{align}
w_{t+\epsilon}\left[t_{n(k)}\right]&=E\int_0^{S_{t_{n(k)}}}V_{t+\epsilon}(s)ds-\frac{\gamma\lambda P_{t+\epsilon} ES_{t_{n(k)}}^2}{2\left(1-\lambda P_{t+\epsilon} ES_{t_{n(k)}}\right)}\\&\xrightarrow{k\to\infty}E\int_0^{S'_{t}}V_{t+\epsilon}(s)ds-\frac{\gamma\lambda P_{t+\epsilon} E\left(S_t'\right)^2}{2\left(1-\lambda P_{t+\epsilon} ES'_{t}\right)}=\frac{\phi_{t+\epsilon}(S'_t)}{P_{t+\epsilon}}\nonumber
\end{align}
Therefore, deduce that
\begin{equation}
r(t)\leq \lim_{k\to\infty}\inf w\left[t_{n(k)}\right]\leq \lim_{k\to\infty}\sup w\left[t_{n(k)}\right]\leq \frac{\phi_{t+\epsilon}(S'_t)}{P_{t+\epsilon}}\,.
\end{equation}
Note that this inequality is valid even when $\epsilon$ is replaced by some $\epsilon'\in(0,\epsilon)$. This is true because up to a finite prefix, the sequence $\{t_{n(k)}; k\geq1\}$ belongs to $(t_{\min},t+\epsilon')$. Thus, the next step is to take a limit of the upper bound as $\epsilon\downarrow0$. In practice, the same kind of arguments which were made in the previous limits calculations, imply that 
\begin{align}
\frac{\phi_{t+\epsilon}(S'_t)}{P_{t+\epsilon}}&=E\int_0^{S'_{t}}V_{t+\epsilon}(s)ds-\frac{\gamma\lambda P(T>t+\epsilon) E\left(S_t'\right)^2}{2\left[1-\lambda P\left(T>t+\epsilon\right) ES'_{t}\right]}\nonumber\\&\xrightarrow{\epsilon\downarrow0}E\int_0^{S'_{t}}V_{t}(s)ds-\frac{\gamma\lambda P(T>t) E\left(S_t'\right)^2}{2\left[1-\lambda P\left(T>t\right) ES'_{t}\right]}=r(t)\,.
\end{align}
This shows that $r(t_{n(k)})\rightarrow r(t)$ as $k\to\infty$. 

Now, note that $t\mapsto P_t$ is decreasing on $[t_{\min},t_{\max}]$ and for every $0<p_1<p_2\leq1$, $\mathcal{D}_{p_2}\subset\mathcal{D}_{p_1}$. Therefore, since for every $s\geq0$, $t\mapsto V_t(s)$ is nondecreasing on $[t_{\min},t_{\max}]$, then 

\begin{equation}
r(t)=\max_{S\in\mathcal{D}_{P_t}}\left\{ E\int_0^{S}V_{t}(s)ds-\frac{\gamma\lambda P(T>t) ES^2}{2\left[1-\lambda P\left(T>t\right) ES\right]}\right\}
\end{equation}
is increasing in $t$ on $(t_{\min},t_{\max})$. Furthermore, for every $t\in(t_{\min},t_{\max})$
\begin{equation}
0<r(t)\leq E\int_0^{S^{**}}V_{t_{\max}}(0)ds=ES^{**}V_{t_{\max}}(0)<\infty\,.
\end{equation}
Hence, if either $t_n\uparrow t$ as $n\to\infty$ or $t_n\downarrow t$ as $n\to\infty$, then $\{r(t_n);n\geq1\}$ is a bounded monotone sequence. This means that in both cases
\begin{equation}
\exists\lim_{n\to\infty}r(t_n)=\lim_{k\to\infty}r(t_{n(k)})=r(t)\,.
\end{equation}
Hence, by the generality of $\{t_n\}_{n=1}^\infty$, deduce that
\begin{equation}
\lim_{u\uparrow t}r(u)=\lim_{u\downarrow t}r(u)=r(t)
\end{equation}
which makes the result follows.
\end{document}